\newtheorem{theorem}{Theorem}[section]
\newtheorem{lemma}[theorem]{Lemma}
\newtheorem{proposition}[theorem]{Proposition}
\newtheorem{problem}[theorem]{Question}
\theoremstyle{definition}
\newtheorem{definition}[theorem]{Definition}
\newtheorem{example}[theorem]{Example}
\newtheorem{remark}[theorem]{Remark}
\numberwithin{equation}{section}
\newtheorem{thmy}{Theorem}
\newenvironment{thmx}{\stepcounter{theorem}\begin{thmy}}{\end{thmy}}
\newcommand{\C}{\mathbb{C}}
\DeclareMathOperator{\GL}{GL}
\newcommand{\flag}[1]{\mathcal{F}\ell(\mathbb{C}^{#1})} 
\newcommand{\Xwo}[1]{X_{#1}^{\circ}}
\newcommand{\Xw}[1]{X_{#1}}
\newcommand{\Owo}[1]{\Omega_{#1}^{\circ}}
\newcommand{\Ow}[1]{\Omega_{#1}}
\newcommand{\Owh}[1]{\Omega_{#1,h}}
\DeclareMathOperator{\Hess}{Hess}
\newcommand{\Gh}{\Gamma_h}
\DeclareMathOperator{\codim}{\codim}
\newcommand{\Sn}[1]{\mathfrak{S}_{#1}} 
\newcommand{\ve}{\mathbf{e}}
\newcommand{\hpat}[1]{{#1}_h} 
\newcommand{\gen}{\mathcal{G}_h}
\definecolor{cadmiumgreen}{rgb}{0.0, 0.42, 0.24}
\begin{document}

\title[Characterization of the smoothness of Hessenberg Schubert varieties]{Towards combinatorial characterization of the smoothness of Hessenberg Schubert varieties}

\author{Soojin Cho}
\address{Department of Mathematics, Ajou University, Suwon  16499, Republic of Korea}
\email{chosj@ajou.ac.kr}

\author{JiSun Huh}
\address{Department of Mathematics, University of Seoul, Seoul, Republic of Korea}
\email{hyunyjia@yonsei.ac.kr}

\author{Seonjeong Park}
\address{Department of Mathematics Education, Jeonju University, Jeonju 55069, Republic of Korea}
\email{seonjeongpark@jj.ac.kr}

\thanks{This work was supported by the National Research Foundation of Korea [NRF-2020R1A2C1A01011045].}

\begin{abstract}  
A \emph{Hessenberg Schubert variety} is an irreducible component of the intersection of a Schubert variety and a Hessenberg variety, defined as the closure of a Schubert cell intersected with the Hessenberg variety. 
We consider the smoothness of Hessenberg Schubert varieties of regular semisimple Hessenberg varieties of type $A$ in this paper. 

We consider the smoothness of the intersection of a Schubert variety and a Hessenberg variety to ensure the smoothness of the corresponding Hessenberg Schubert variety. Specifically, we analyze the structure of the GKM graphs of the intersection of a Schubert variety 
 and a Hessenberg variety. Our results show that
the regularity of these GKM graphs is completely characterized in terms of pattern avoidance, which is a necessary and sufficient
condition for the intersection to be smooth. This shows that our pattern avoidance provides a sufficient condition for the smoothness of a Hessenberg Schubert variety.
\end{abstract}

\keywords{Hessenberg variety, Schubert variety, Hessenberg Schubert variety, smoothness, pattern avoidance, GKM graph, Bruhat order, $h$-Bruhat order}

\subjclass[2010]{Primary 14M15, 05E14; Secondary 14L30, 57S12}
\maketitle

\setcounter{tocdepth}{1}

\section{Introduction} \label{sec:intro}

Schubert varieties in full flag varieties form an important class of complex projective varieties that appear in many areas, including algebraic geometry, representation theory, and combinatorics. Schubert varieties of the full flag variety of type $A_{n-1}$ are subvarieties of $\GL_n(\mathbb C)/B$, where $B$ is the Borel subgroup of upper triangular matrices in $\GL_n(\mathbb C)$. Each Schubert variety is defined as the closure of a Schubert cell,
\[
  X_w \coloneqq \overline{BwB/B},
\]
and the index \(w\) runs over the symmetric group \(\mathfrak{S}_n\), the Weyl group of \(\GL_n(\mathbb{C})\).
The closure of the opposite Schubert cell $\Omega_w^\circ\coloneqq B^-wB/B$ is the opposite Schubert variety $\Omega_w$ of $\GL_n(\mathbb C)/B$, where $B^-$ is the subgroup of lower triangular matrices in $\GL_n(\mathbb C)$.
The characterization of the smoothness of $X_w$ (hence of $\Omega_w$) was done in different ways: pattern avoidance, the regularity of the Bruhat subgraph, the Poincar\'e polynomial of the cohomology, and the Kazhdan--Lusztig polynomial (see Section 13.2 of \cite{BL}). We write some of them, which are relevant to our work, in the following theorem. Note that a maximal torus $T\subset B$ acts on the flag variety $\GL_n(\mathbb{C})/B$ by left multiplication, and the $T$-fixed points can be naturally identified with the elements of $\mathfrak{S}_n$. Furthermore, the opposite Schubert varieties are $T$-stable subvarieties of $\GL_n(\mathbb{C})/B$.

Recall that the GKM graph of the flag variety has vertices corresponding to the \(T\)-fixed points, which can be naturally identified with elements of the symmetric group. Moreover, it is isomorphic to the Bruhat graph of \(\mathfrak{S}_n\).

\begin{theorem}\label{thm:known_smoothness}\cite{BL,Carrell,LSa} The following statements are equivalent:
\begin{enumerate}
\item The opposite Schubert variety $\Omega_w$ is smooth.
\item The subgraph of the Bruhat graph of $\mathfrak S_n$ (equivalently, the GKM graph of $\GL_n(\mathbb C)/B$) induced by the set $\Omega_w^T$ of torus fixed points is a regular graph.
\item The permutation $w$ avoids patterns $2143$ and $1324$.
\end{enumerate}
\end{theorem}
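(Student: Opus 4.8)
\emph{Strategy.} This equivalence is classical (see Section~13.2 of \cite{BL}); here is how we would assemble it, from the GKM/tangent-space criterion for smoothness together with the Lakshmibai--Sandhya pattern analysis. We would first reduce to ordinary Schubert varieties: left translation $gB\mapsto w_0 gB$ is an automorphism of $\GL_n(\mathbb{C})/B$, equivariant for the $T$-action up to the automorphism $t\mapsto w_0 t w_0^{-1}$ of $T$, and since $w_0 B^- w_0^{-1}=B$ it carries $\Omega_w$ isomorphically onto $X_{w_0 w}=\overline{Bw_0 wB/B}$, identifies $\Omega_w^T$ with $X_{w_0 w}^T$, and preserves the induced GKM-subgraph up to a relabelling of weights. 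Since $(w_0 w)(i)=n+1-w(i)$, the permutation $w_0 w$ avoids $3412$ and $4231$ if and only if $w$ avoids the complementary patterns $2143$ and $1324$; so it is enough to prove the three-way equivalence for $X_v$ with patterns $3412,4231$ (or, with no essential change, to argue directly for $\Omega_w$).

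\emph{The equivalence (1)$\Leftrightarrow$(2).} We would identify $X_v^T=\{u\in\mathfrak{S}_n:u\le v\}$ and observe that a $T$-invariant curve of $\GL_n(\mathbb{C})/B$ joining fixed points $u$ and $tu$ ($t$ a reflection) lies in the closed $T$-stable variety $X_v$ exactly when both of its endpoints do; hence the moment graph of $X_v$ is precisely the subgraph of the GKM/Bruhat graph induced on $X_v^T$. Two facts then close the loop: (i) a variety carrying a $T$-action with finitely many fixed points is smooth if and only if it is smooth at each fixed point, because its singular locus is closed and $T$-stable and so meets the fixed locus whenever nonempty; and (ii) for a Schubert variety the Zariski tangent space at a fixed point $u$ has dimension equal to the number of $T$-invariant curves through $u$, i.e.\ to the degree $\deg_u$ of $u$ in the induced graph (Carrell--Peterson; this rests on the $T$-stable affine paving and equivariant formality of Schubert varieties). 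As $v$ lies in the smooth open cell $BvB/B$, of dimension $\ell(v)=\dim X_v$, we get $\deg_v=\dim X_v$. Therefore the induced graph is regular $\iff$ $\deg_u=\deg_v=\dim X_v$ for every $u\le v$ $\iff$ $\dim T_u X_v=\dim X_v$ for every $u\le v$ $\iff$ $X_v$ is smooth.

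\emph{The equivalence (1)$\Leftrightarrow$(3).} For the direction ``$v$ contains $3412$ or $4231$ $\Rightarrow$ $X_v$ singular'' we would first dispose of the minimal instances $v=3412$ and $v=4231$ in $\mathfrak{S}_4$ by hand: the tangent space at $e$ has dimension $\#\{\text{reflections }t:t\le v\}$, which is $5$ and $6$ respectively, exceeding $\dim X_v=\ell(v)=4$ and $5$; so these $X_v$ are singular at $e$. We would then propagate the singularity along a chosen pattern occurrence, either via the combinatorial formula of Billey (or Carrell) expressing $\deg_u$ through the reflections in $[u,v]$ (which shows $\deg_u$ cannot remain constant once a pattern occurs), or via a local fibration modelling $X_v$ near a suitable fixed point on the singular Schubert variety $X_{\tilde v}$ attached to the subword $\tilde v$. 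For the converse, ``$v$ avoids $3412$ and $4231$ $\Rightarrow$ $X_v$ smooth'', the cleanest route is the Ryan--Wolper / Billey--Postnikov parabolic decomposition: for such $v$ one iteratively realizes $X_v$ as a partial-flag (in particular Grassmannian) bundle over a Schubert variety of the same type for a smaller symmetric group, so $X_v$ is an iterated smooth fiber bundle and hence smooth; alternatively one runs Gasharov's inductive factorization of pattern-avoiding permutations and tracks $\deg_u$---equivalently the Poincar\'e polynomial of $X_v$---through the factorization to see directly that the induced graph is regular.

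\emph{The main obstacle.} The one step that is not essentially formal is ``$v$ avoids $3412$ and $4231$ $\Rightarrow$ the induced graph is regular'' (equivalently, $X_v$ smooth). Regularity is an assertion about the whole interval $[e,v]$, not a computation at a single vertex, so proving it requires genuine structural control of the Bruhat order on pattern-avoiding permutations---the parabolic fiber-bundle decomposition above, or an equivalent deletion/insertion induction. The GKM dictionary, the $w_0$-translation, and the ``contains a pattern $\Rightarrow$ singular'' direction are routine by comparison.
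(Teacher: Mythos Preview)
The paper does not prove this theorem at all: it is quoted in the introduction as known background, with a reference to \cite{BL} (and implicitly to \cite{LSa} and \cite{CK}), and is used only as motivation for the Hessenberg analogue. Your sketch therefore goes well beyond what the paper provides; it is a reasonable assembly of the classical argument and cites the same primary reference.

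One cautionary note on your step~(ii) in the (1)$\Leftrightarrow$(2) argument: the assertion that $\dim T_u X_v$ equals the degree of $u$ in the induced Bruhat graph is not automatic from the GKM formalism; what is immediate is the inequality $\dim T_u X_v \ge \deg_u$. The equality (equivalently, that rational smoothness coincides with smoothness) is a genuine theorem in type $A$ (Deodhar, Peterson, Carrell--Kuttler), and your sketch should flag it as such rather than folding it into the GKM dictionary. Apart from that, the $w_0$-translation, the pattern complementation $\{2143,1324\}\leftrightarrow\{3412,4231\}$, and the appeal to Lakshmibai--Sandhya/Ryan--Wolper for (1)$\Leftrightarrow$(3) are all standard and correct.
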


Hessenberg varieties are subvarieties of the full flag variety, which were introduced by De Mari, Shayman, and Proceci \cite{DPS} in the 1990s. 
A diagonal matrix $S$ with distinct eigenvalues and a nondecreasing function $h\colon \{1, \dots, n\} \to \{1, \dots, n\}$ with $h(i)\geq i$ for $i=1, \dots, n$, define a regular semisimple Hessenberg variety $\Hess(S, h)$ of type $A$. Owing to their interesting characteristics, Hessenberg varieties have become one of the central objects of research in related areas, particularly combinatorics and algebraic geometry. A surprising instance is that the well-known Stanley--Stembridge conjecture in algebraic combinatorics is shown to be equivalent to a conjecture on the $\mathfrak{S}_n$-module structure of the cohomology of the corresponding  Hessenberg variety $\Hess(S, h)$ \cite{BC,G-P}. In this context, Cho, Hong, and Lee~\cite{CHL} considered the minus cell decomposition (the Bia{\l}ynicki--Birula decomposition)
\[
\bigsqcup_{w \in \mathfrak{S}_n} \bigl(\Omega_w^\circ \cap \Hess(S,h)\bigr)
\]
of \(\Hess(S,h)\) and the cohomology classes \(\sigma_{w,h}\) of \(\overline{\Omega_w^\circ \cap \Hess(S,h)}\) to investigate the \(\mathfrak{S}_n\)-module structure of the cohomology space.
Then
\[
\Omega_{w,h}^\circ \coloneqq \Omega_w^\circ \cap \Hess(S,h)
\quad\text{and}\quad
\Omega_{w,h} \coloneqq \overline{\Omega_{w,h}^\circ}
\]
are called the \emph{opposite Hessenberg Schubert cell} and the \emph{opposite Hessenberg Schubert variety}, respectively. In the same paper, they showed that the support of the class $\sigma_{w, h}$ is the set  $\Omega_{w, h}^T$  of torus fixed points of the Hessenberg Schubert variety and provided an explicit combinatorial description of the torus fixed points. The permutations in $\mathfrak{S}_n=\Hess(S,h)^T$ are partitioned according to the Hessenberg function~$h$. More precisely, for a Hessenberg function~$h$, we define
\begin{equation*}
    \gen\coloneqq \{w\in\mathfrak S_n\mid w^{-1}(w(j)+1)\leq h(j)\text{ for all }w(j)\leq n-1\}.
\end{equation*}
Then for every $w\in\Sn{n}$, there is a unique element $\widetilde{w}\in\gen$ 
satisfying, for all $i<j$ with $j\leq h(i)$,
\begin{equation}\label{eq:generator}
    \widetilde{w}(i)<\widetilde{w}(j)\qquad\text{if and only if}\qquad w(i)<w(j).
\end{equation}
See also~\cite[Proposition~3.8]{HP}.
The cohomology classes corresponding to $w\in\gen$ generate the $\mathfrak{S}_n$-module  $H^\ast(\Hess(S,h))$; see  Section~3 of \cite{CHL2}. 
In particular, we have 
\[\Omega_{w,h}=u(\overline{\Omega_{\widetilde{w}}^\circ\cap\Hess(u^{-1}Su,h)}),\] 
where $u=w\widetilde{w}^{-1}$, see~\cite[Proposition~3.14]{HP}.

Little is known about the geometry and topology of Hessenberg Schubert varieties; see~\cite{CHL, HP} for more details.  
Recently, Insko, Precup, and Woo~\cite{IPW} investigated the smoothness of Hessenberg Schubert varieties defined as the closures of the intersections between a regular Hessenberg variety and Schubert cells associated with the Hessenberg function $h=(2,3,\dots,n,n)$. 
In contrast, our focus is on the smoothness of Hessenberg Schubert varieties defined as the intersections between a regular semisimple Hessenberg variety and Schubert cells for an arbitrary Hessenberg function $h$. 
In our setting, when $h=(2,3,\dots,n,n)$, the regular semisimple Hessenberg variety is the permutohedral variety, which is known to be a smooth toric variety \cite{DPS}; hence, every Hessenberg Schubert variety in this case is smooth.

Since a Hessenberg Schubert variety $\Ow{w,h}$ is an irreducible component of $\Ow{w}\cap \Hess(S,h)$, if $\Ow{w}\cap \Hess(S,h)$ is smooth, then $\Owh{w}$ is also smooth, but the converse is not true~\cite[Example~6.5]{HLP}.
In this paper, we aim to extend the results (in Theorem~\ref{thm:known_smoothness}) regarding the characterization of the smoothness of the Schubert varieties $\Omega_w$ to those for the Hessenberg Schubert varieties $\Omega_{w, h}$ by considering the smoothness of the intersections $\Omega_{w}\cap\Hess(S,h)$. 

The Hessenberg variety $\Hess(S, h)$ is known to be a GKM variety, and its GKM graph (denoted by $\Gamma_h$) is a subgraph of the GKM graph of the full flag variety, with the same set $\mathfrak S_n$ of vertices~\cite{DPS}. We define the \emph{$h$-Bruhat order} as the transitive closure of the following relation:
\[u\prec_h v\text{ if and only if } u\prec v\text{ and there is an edge connecting $u$ and $v$ in $\Gamma_h$}.\]
We first 
interpret the set $\Omega_{w, h}^T$, which has been shown in \cite{HP} to be the interval $[w, w_0]$ in the Bruhat order for $w\in \gen$, where $w_0$ denotes the longest element, in terms of the $h$-Bruhat order. We use $[n]$ for the set $\{1, 2, \dots, n\}$.

\begin{thmx}[Theorem~\ref{thm:interval}]\label{thmx:1}
For a given Hessenberg function $h\colon [n] \to  [n]$, if $w\in\gen$, then $\Omega_{w, h}^T=[w, w_0]_h$, the interval between $w$ and $w_0$ in the $h$-Bruhat order. 
\end{thmx}

Let $\Gamma_{w,h}$ be the subgraph of $\Gamma_h$ induced by the set $\Omega_{w, h}^T$ of torus fixed points. As stated in Proposition~\ref{prop:Hessenberg Schubert intersection}, if $\Ow{w}\cap\Hess(S,h)$ is smooth and connected, then $w\in\gen$ and $\Gamma_{w,h}$ is regular. Furthermore, if $w$ is the representative in $\gen$ corresponding to $v\in\Sn{n}$, then $\Gamma_{v,h}$ and $\Gamma_{w,h}$ are isomorphic by Lemma~\ref{lemm:iso}. In the following theorem, we show that for $w\in\gen$, the degree of the vertices in $\Gamma_{w,h}$ is nondecreasing with respect to the $h$-Bruhat order. This theorem is essential for characterizing the regularity of $\Gamma_{w, h}$.  
For two permutations $u$ and $v$, we use $u\preceq_h v$ to denote the $h$-Bruhat order relation.

\begin{thmx}[Theorem~\ref{thm:increasing}]\label{thmx:2}
For a Hessenberg function $h$ and $w\in \gen$, if $u$ and $v$ are vertices in $\Gamma_{w,h}$ with $u\preceq_h v$, then $\deg(u)\leq \deg(v)$, where the degree of a vertex is the number of edges that are incident to the vertex.
\end{thmx}

If we apply Theorem~\ref{thmx:2} to $h=(n,\dots,n)$, we obtain that if $u\prec v$ in the Bruhat order, 
then $\deg(u)\leq \deg(v)$ in the Bruhat graph $\Gamma_w$ for every $w\in \mathfrak{S}_n$. 
Consequently, $\Gamma_w$ is regular if and only if $\deg(w)=\deg(w_0)$ in $\Gamma_w$. 
This, in turn, recovers the well-known result that $\Omega_w$ is smooth if and only if it is smooth at $w_0$ 
(see~\cite[p.~208]{BL}).

We then consider the regularity of $\Gamma_{w,h}$ in relation with patterns in the permutation~$w$, where the patterns depend upon the given Hessenberg functions~$h$. We find all the patterns that $w$ must avoid for $\Gamma_{w,h}$ to be regular as stated in the following theorem. The patterns are defined in Definitions~\ref{def:pattern4} and~\ref{def:pattern5}. 

\begin{thmx}[Theorems \ref{thm:irregular}, \ref{thm:regular},  and \ref{thm:main}]\label{thmx:3}
Let $h\colon [n] \to [n]$ be a Hessenberg function.

\begin{enumerate}
    \item For $w\in \gen$, the graph $\Gamma_{w,h}$ is regular if and only if $w$ avoids all the associated patterns $\hpat{2143}$, $\hpat{1324}$, $\hpat{1243}$, $\hpat{2134}$, $\hpat{1423}$, $\hpat{2314}$, and $\hpat{2413}$. 

    \item For $w\in \mathfrak S_n$, the graph $\Gamma_{w,h}$ is regular if and only if $w$ avoids all the associated patterns $\hpat{2143}$, $\hpat{1324}$, $\hpat{1243}$, $\hpat{2134}$, $\hpat{1423}$, $\hpat{2314}$, $\hpat{25314}$, $\hpat{24315}$, $\hpat{14325}$, and $\hpat{15324}$. 
\end{enumerate}
\end{thmx}

When \(h = (n,n,\dots,n)\), i.e, $\Hess(S,h)=\flag{n}$, every permutation belongs to \(\gen\) and automatically avoids the patterns
\(\hpat{1243}\), \(\hpat{2134}\), \(\hpat{1423}\), \(\hpat{2314}\), and \(\hpat{2413}\).
Therefore, \(\Gamma_{w,h}\) is regular if and only if \(w\) avoids the patterns
\(\hpat{2143}\) and \(\hpat{1324}\).
This agrees with Theorem~\ref{thm:known_smoothness}.

Recently, Hong, Lee, and Park~\cite{HLP} proved that for $w\in\gen$, the intersection $\Omega_w\cap\Hess(S,h)$ is smooth if and only if $\Gamma_{w,h}$ is regular. Therefore, our pattern avoidance condition gives a sufficient condition for the smoothness of $\Owh{w}$ for $w\in\Sn{n}$ (see Theorems~\ref{conj:regular},~\ref{thm:final}, and Figure~\ref{fig:diagram}).
 
 This paper is organized as follows. In Section~\ref{sec:2}, first, we set up notation and terminology, and then review some basic theories on Schubert varieties and Hessenberg Schubert varieties. In Section~\ref{sec:GKM}, we focus on the GKM graph $\Gamma_h$ of a Hessenberg variety and its subgraph $\Gamma_{w, h}$. The set of vertices of $\Gamma_{w, h}$ is characterized in terms of the $h$-Bruhat order, and a nice injective map from the set of edges of a vertex to the set of edges of an incident vertex is introduced. Further, the seven patterns
that permutations $w\in\gen$ must avoid in order for the variety $\Ow{w}\cap\Hess(S,h)$ to be smooth are also introduced in Section~\ref{sec:GKM}. In Section~\ref{sec:regular}, we prove that avoiding the seven patterns of length~$4$ for permutations in $\gen$ and ten patterns of length $4$ or~$5$ for arbitrary permutations suffices to ensure that the corresponding graph $\Gamma_{w, h}$ is regular. 
Hence, we show that avoiding these patterns suffices for the smoothness of $\Owh{w}$ for $w\in\Sn{n}$.

\section{Preliminaries}\label{sec:2}

\subsection{Basic terminologies and properties.}
Let $\mathfrak{S}_n$ be a symmetric group on $[n]$. For a permutation $w\in\mathfrak{S}_n$, we use the one-line notation 
$w=w(1)w(2)\cdots w(n).$ For $1\leq i<j\leq n$, the permutation that acts on $[n]$ by swapping $i$ and $j$ is called a \emph{transposition}, and it is denoted by~$(i,j)$. The \emph{Bruhat order} on $\mathfrak{S}_n$ is the transitive closure of the relation
$$u\prec v \quad \text{ if and only if } \quad v=u(i,j) \text{ for some $(i,  j)$ and } \ell(u)<\ell(v)\,.$$
The \emph{length} of $w$ is defined by the number of inversions and denoted by $\ell(w)$, i.e.,
$\ell(w)=\left|\{ (i,j)\mid w(i,j) \prec w \}  \right|$.
The poset $(\Sn{n},\prec)$ is a graded poset whose rank function is given by the length of a permutation.
Notably, $(\Sn{n},\prec)$ has the unique minimal element $e = 1\, 2\,\cdots\,n$ and the unique maximal element $w_0=n\,(n-1)\,\cdots\,1.$  The \emph{Bruhat interval} $[v,w]$ is the subposet of $(\Sn{n},\prec)$ defined by $[v,w]=\{u\in \Sn{n}\mid v\preceq u\preceq w\}$.
The \emph{Bruhat graph} for $\Sn{n}$ is the graph with the vertex set $\Sn{n}$ and edges $\{v,w\}$ if $w=v(i,j)$ for some $(i,j)$. 

For $w \in \Sn{n}$ and $p \in \Sn{k}$ with $k \leq n$, we say that the permutation $w$ \emph{contains} the pattern $p$ if a sequence $1 \leq i_1 < \cdots < i_k \leq n$ exists such that $w(i_1)\cdots w(i_k)$ has the same relative order as $p(1)\cdots p(k)$. If $w$ does not contain $p$, then we say that $w$ \emph{avoids} $p$ or is \emph{$p$-avoiding}. 

For positive integers $k_1\leq k_2$, we denote the set $\{ k_1, k_1+1, \dots, k_2 \}$ by $[k_1, k_2]$.
For a permutation $w\in \mathfrak{S}_n$,
we let $w[k_1,k_2]$ be the set $\{w(i)\mid k_1\leq i \leq k_2\}$.
We use $A\!\!\uparrow$ to represent the increasing sequence $\{a_1 < a_2 < \cdots < a_k \}$ of the elements in $A$, where $A$ is a set of $k$ integers. 
For two finite sets $A$ and $B$ of integers of the same cardinality, $A\!\!\uparrow \leq B\!\!\uparrow$ means $a_i\leq b_i$ for all $i$, where $a_i$ and $b_i$ are the $i$th elements of $A\!\!\uparrow$ and $B\!\!\uparrow$, respectively.

There are many equivalent conditions to $u\prec v$ in the Bruhat order; we give some of them that we use in this paper.
We refer to the book~\cite{BB} for more general theories on the Bruhat orders for Coxeter groups.  

\begin{proposition}\cite[Theorem 2.6.3]{BB}\label{prop:Bruhat order}
    For $u, v\in \mathfrak{S}_n$, the following statements are equivalent:
    \begin{enumerate}
        \item $u\preceq v$ in the Bruhat order.
        \item $u[k]\!\!\uparrow\leq v[k]\!\!\uparrow$ for all $k\in [n]$.\item $u[k]\!\!\uparrow\leq v[k]\!\!\uparrow$ for all $k\in [n-1]-D(v)$, where $D(v)\coloneqq \{ i\in [n-1]\mid v(i)>v(i+1)\}$ is the set of \emph{descents} of $v$.
    \end{enumerate}
\end{proposition}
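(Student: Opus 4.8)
\textit{Proof plan.} I would establish the cycle of implications $(1)\Rightarrow(2)\Rightarrow(3)\Rightarrow(1)$.

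For $(1)\Rightarrow(2)$ the idea is to reduce to a single covering step. A relation $u\preceq v$ is witnessed by a chain $u=u_0\prec u_1\prec\cdots\prec u_m=v$ with $u_{\ell+1}=u_\ell\,(i_\ell,j_\ell)$ and $\ell(u_{\ell+1})=\ell(u_\ell)+1$ at each step, and componentwise domination of increasing rearrangements is transitive, so it suffices to handle one step $v=u\,(i,j)$ with $i<j$ and $\ell(v)>\ell(u)$. The length condition forces $u(i)<u(j)$, and $v$ coincides with $u$ outside positions $i$ and $j$, where the two values are swapped. Hence $v[k]=u[k]$ when $k<i$ or $k\ge j$, while for $i\le k<j$ the set $v[k]$ is obtained from $u[k]$ by replacing the element $u(i)$ by the strictly larger element $u(j)$. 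In either case $u[k]\!\!\uparrow\le v[k]\!\!\uparrow$, by the elementary fact that replacing one element of a finite set by a larger one cannot decrease any entry of its increasing rearrangement.

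The implication $(2)\Rightarrow(3)$ is trivial: $[n-1]\setminus D(v)\subseteq[n]$, and the case $k=n$ of $(2)$ is vacuous since $u[n]=v[n]=[n]$. For $(3)\Rightarrow(1)$ I would proceed in two stages. First, upgrade $(3)$ to $(2)$: given $k\in D(v)$, let $b\le k<c$ be the endpoints of the maximal block of consecutive descents of $v$ through $k$, so $v(b)>v(b+1)>\cdots>v(c)$ and $b-1,\,c\in\bigl([n-1]\setminus D(v)\bigr)\cup\{0,n\}$; then $(3)$ provides $u[b-1]\!\!\uparrow\le v[b-1]\!\!\uparrow$ and $u[c]\!\!\uparrow\le v[c]\!\!\uparrow$. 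Since $v$ is strictly decreasing on positions $b,\dots,c$, the set $v[k]$ is precisely $v[b-1]$ together with the $k-b+1$ largest elements of $v[c]\setminus v[b-1]$; that is, $v[k]$ is the componentwise-largest $k$-element set wedged between $v[b-1]$ and $v[c]$, whereas $u[k]$ is merely \emph{some} $k$-element set between $u[b-1]$ and $u[c]$. Thus $u[k]\!\!\uparrow\le v[k]\!\!\uparrow$ would follow from a combinatorial lemma on increasing rearrangements: if $A\subseteq C$, $P\subseteq Q$, $A\!\!\uparrow\le P\!\!\uparrow$ and $C\!\!\uparrow\le Q\!\!\uparrow$, then any $k$-set between $A$ and $C$ is dominated by the largest $k$-set between $P$ and $Q$. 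Second, prove $(2)\Rightarrow(1)$ by induction on the nonnegative integer $\delta(u,v):=\sum_{k=1}^{n}\sum_{i=1}^{k}\bigl((v[k]\!\!\uparrow)_i-(u[k]\!\!\uparrow)_i\bigr)$. If $\delta(u,v)=0$ then $u[k]=v[k]$ for every $k$, whence $u=v$; otherwise let $i$ be least with $u(i)\ne v(i)$, note $u[i-1]=v[i-1]$ and deduce $u(i)<v(i)$ from $u[i]\!\!\uparrow\le v[i]\!\!\uparrow$, let $j$ be least with $j>i$ and $u(i)<u(j)\le v(i)$ (such $j$ exists since $v(i)$ occurs in $u$ past position $i$), and put $u'=u\,(i,j)$. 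Minimality of $j$ makes $u\lessdot u'$ a cover, and a direct check — using $u[i-1]=v[i-1]$ and, again, minimality of $j$ — shows $u'[k]\!\!\uparrow\le v[k]\!\!\uparrow$ for all $k$; since $u'$ dominates $u$ coordinatewise in every prefix and strictly in at least one, we get $0\le\delta(u',v)<\delta(u,v)$, and the inductive hypothesis yields $u\preceq u'\prec v$.

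I expect the main obstacle to be the first stage of $(3)\Rightarrow(1)$, namely the combinatorial lemma comparing a fixed $k$-set squeezed between $u[b-1]$ and $u[c]$ with the maximal $k$-set squeezed between $v[b-1]$ and $v[c]$. The subtlety is that the intermediate prefixes $u[b],\dots,u[c-1]$ are not individually controlled by $(3)$ — only the two flanking prefixes are — so one must show that the operation ``take the $k$ largest elements lying above a fixed subset'' respects componentwise domination of the subset and of the ambient set simultaneously. The remaining ingredients ($(1)\Rightarrow(2)$, $(2)\Rightarrow(3)$, and the covering/induction bookkeeping in $(2)\Rightarrow(1)$) are routine once that lemma is available.
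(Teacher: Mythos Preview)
The paper does not prove this proposition: it is recorded in the preliminaries as a known characterization of the Bruhat order, with a blanket reference to Bj\"orner--Brenti~\cite{BB} for background on Coxeter groups. There is therefore no ``paper's own proof'' to compare against.

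Your plan is correct and follows the standard route (the tableau criterion and its refinement to non-descent positions, as in~\cite{BB}). Two remarks on the parts you flag as nontrivial. First, the combinatorial lemma behind $(3)\Rightarrow(2)$ has a clean proof via the equivalence ``$X\!\!\uparrow\le Y\!\!\uparrow$ iff $|X\cap(t,\infty)|\le|Y\cap(t,\infty)|$ for all $t$'': for your maximal set $R$ one has $|R\cap(t,\infty)|=\min\bigl(k-|P\cap(-\infty,t]|,\ |Q\cap(t,\infty)|\bigr)$, and each term in the minimum dominates $|B\cap(t,\infty)|$ (the first via $A\subseteq B$ together with $A\!\!\uparrow\le P\!\!\uparrow$, the second via $B\subseteq C$ together with $C\!\!\uparrow\le Q\!\!\uparrow$). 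Second, in $(2)\Rightarrow(1)$ the ``direct check'' that $u'[k]\!\!\uparrow\le v[k]\!\!\uparrow$ for $i\le k<j$ is not entirely automatic: using the same characterization, the only range requiring work is $u(i)\le t<u(j)$, and there the needed strict inequality $|u[k]\cap(t,\infty)|<|v[k]\cap(t,\infty)|$ follows by first comparing at the threshold $v(i)\ge u(j)>t$ and then invoking $u[i-1]=v[i-1]$ and the minimality of $j$ (which forces every $u(m)>t$ with $i<m\le k$ to in fact satisfy $u(m)>v(i)$).
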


\begin{lemma}\cite[Lemma 2.1]{Bil}\label{lemma:Bruhat order}
    Suppose two permutations $u$ and $v$ agree everywhere except on positions $i_1<\cdots<i_k$. Then $u\preceq v$ in the Bruhat order if and only if $\{ u(i_1), \dots, u(i_j)\}\!\!\uparrow \leq \{ v(i_1), \dots, v(i_j)\}\!\!\uparrow $ for all $j=1, \dots, k$. In particular, 
    $u\prec u(i,j)$ if and only if $u(i)<u(j)$.
\end{lemma}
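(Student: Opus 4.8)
The plan is to derive the lemma from the comparison criterion in Proposition~\ref{prop:Bruhat order}(2)---that $u\preceq v$ if and only if $u[m]\!\!\uparrow\leq v[m]\!\!\uparrow$ for all $m\in[n]$---together with an elementary cancellation fact about increasing rearrangements of integer sets.

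First I would record the following reformulation: for finite sets $X,Y$ of integers with $|X|=|Y|$, one has $X\!\!\uparrow\leq Y\!\!\uparrow$ if and only if $|\{x\in X:x\leq t\}|\geq|\{y\in Y:y\leq t\}|$ for every integer $t$. The forward implication is immediate from $x_i\leq y_i$, and the converse follows by taking $t=y_i$ for each $i$. An immediate consequence is the \emph{cancellation lemma}: if $C$ is any finite set of integers disjoint from both $X$ and $Y$, then $X\!\!\uparrow\leq Y\!\!\uparrow$ if and only if $(X\cup C)\!\!\uparrow\leq (Y\cup C)\!\!\uparrow$, since adjoining $C$ changes both counting functions $t\mapsto|\{\cdot\leq t\}|$ by the same amount $|\{c\in C:c\leq t\}|$.

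Next, fix $m\in[n]$ and let $j=j(m)$ be the number of indices $i_t$ with $i_t\leq m$, so that $\{i_1,\dots,i_k\}\cap[m]=\{i_1,\dots,i_j\}$. Because $u$ and $v$ agree at every position outside $\{i_1,\dots,i_k\}$, the set $C_m\coloneqq\{u(t):t\in[m]\setminus\{i_1,\dots,i_k\}\}$ equals $\{v(t):t\in[m]\setminus\{i_1,\dots,i_k\}\}$, and one obtains the disjoint decompositions $u[m]=C_m\sqcup\{u(i_1),\dots,u(i_j)\}$ and $v[m]=C_m\sqcup\{v(i_1),\dots,v(i_j)\}$, with the two variable parts of equal cardinality $j$. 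By the cancellation lemma, $u[m]\!\!\uparrow\leq v[m]\!\!\uparrow$ is equivalent to $\{u(i_1),\dots,u(i_j)\}\!\!\uparrow\leq\{v(i_1),\dots,v(i_j)\}\!\!\uparrow$. As $m$ ranges over $[n]$, the value $j(m)$ ranges over $\{0,1,\dots,k\}$ (the $j=0$ case being vacuous), so Proposition~\ref{prop:Bruhat order}(2) gives that $u\preceq v$ if and only if $\{u(i_1),\dots,u(i_j)\}\!\!\uparrow\leq\{v(i_1),\dots,v(i_j)\}\!\!\uparrow$ for all $j=1,\dots,k$, which is the assertion. For the final statement, apply this with $v=u(i,j)$: then $k=2$, the distinguished positions are $i<j$, the $j=1$ condition reads $u(i)\leq u(j)$, and the $j=2$ condition holds automatically; since $u$ is a permutation $u(i)\neq u(j)$, so $u\prec u(i,j)$ exactly when $u(i)<u(j)$.

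The only genuine content is the cancellation lemma, and the only place care is needed is the bookkeeping that identifies, for each truncation length $m$, which distinguished positions lie in $[m]$ and that the remaining entries of $u$ and $v$ in $[m]$ form literally the same set. Neither step is deep, so I do not expect a real obstacle; the main risk is an induction-heavy argument obscuring the simple counting identity, which the reformulation above sidesteps.
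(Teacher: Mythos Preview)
Your argument is correct. The reformulation of $X\!\!\uparrow\leq Y\!\!\uparrow$ in terms of the counting functions $t\mapsto|\{x\leq t\}|$ is standard, the cancellation step is valid because $u$ and $v$ are permutations (so $C_m$ is automatically disjoint from both $\{u(i_1),\dots,u(i_j)\}$ and $\{v(i_1),\dots,v(i_j)\}$), and the observation that $j(m)$ attains every value in $\{0,1,\dots,k\}$ (take $m=i_j$) gives the needed equivalence via Proposition~\ref{prop:Bruhat order}(2). The deduction of the ``in particular'' clause is also fine.

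As for comparison: the paper does not give its own proof of this lemma---it is simply quoted from \cite[Lemma~2.1]{Bil}. Your proof is a clean self-contained derivation from the tableau criterion already stated in the paper, so it is a reasonable addition rather than a departure from anything the authors wrote.
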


The Bruhat order on $\Sn{n}$ satisfies the chain property as follows.

\begin{proposition}[Chain Property]\label{prop:chain}
    If $u\prec v$ in $\mathfrak S_n$, then there exist elements $v_i\in \mathfrak S_n$, satisfying $\ell(v_i)=\ell(u)+i$ for $0\leq i\leq k$, and $u=v_0\prec v_1\prec \cdots \prec v_k=v$.
\end{proposition}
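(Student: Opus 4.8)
This is the classical Chain Property of the strong Bruhat order, and for $\mathfrak S_n$ one can give a self-contained proof from Proposition~\ref{prop:Bruhat order} and Lemma~\ref{lemma:Bruhat order} (see also~\cite{BB}). The plan is to induct on $k\coloneqq\ell(v)-\ell(u)$. The cases $k=0$ (so $u=v$) and $k=1$ are immediate: if $u\prec v$ with $\ell(v)=\ell(u)+1$, then in any chain $u=w_0\prec w_1\prec\cdots\prec w_m=v$ built from single-transposition steps one has $\ell(w_1)\ge\ell(u)+1=\ell(v)$ while $w_1\preceq v$, so $w_1=v$; thus $v=u\,(a,b)$ for a single transposition $(a,b)$ and $u\prec v$ is already a saturated chain.

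For the inductive step ($k\ge 2$) it suffices to exhibit $v'\in\mathfrak S_n$ with $u\prec v'\prec v$, since then $\ell(u)<\ell(v')<\ell(v)$, so the inductive hypothesis applies to the pairs $(u,v')$ and $(v',v)$ — each with strictly smaller length difference — and concatenating the two saturated chains at $v'$ gives a saturated chain from $u$ to $v$. To produce $v'$ I would distinguish whether $u^{-1}v$ is a transposition. If it is not, then for any chain $u=w_0\prec w_1\prec\cdots\prec w_m=v$ the element $w_1=u\,(a_1,b_1)$ differs from $v$, so $m\ge 2$ and $v'\coloneqq w_1$ satisfies $u\prec v'\prec v$. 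If $u^{-1}v=(i,j)$ with $i<j$, then $u(i)<u(j)$ by Lemma~\ref{lemma:Bruhat order}, and since $k\ge 2$ the relation $u\prec v$ is not a covering relation, so by the standard description of Bruhat covers in $\mathfrak S_n$ (see~\cite{BB}) there is an index $p$ with $i<p<j$ and $u(i)<u(p)<u(j)$. I would then set $v'\coloneqq u\,(i,p)$ and check that the three consecutive moves $u\to u\,(i,p)\to u\,(i,p)(i,j)\to v$ are each strictly length-increasing — the values being interchanged are $u(i)<u(p)$, then $u(p)<u(j)$, then $u(i)<u(p)$, so Lemma~\ref{lemma:Bruhat order} applies at each step — and that their composite is $v=u\,(i,j)$; this gives $u\prec v'\prec u\,(i,p)(i,j)\prec v$, hence $u\prec v'\prec v$ as desired.

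The argument is elementary, and the point needing care is this last case: choosing $p$ with $u(i)<u(p)<u(j)$ and checking that $v'=u\,(i,p)$ lies strictly between $u$ and $v$. Equivalently, one may bypass the explicit three-step chain and verify $v'\preceq v$ directly from Lemma~\ref{lemma:Bruhat order} applied to the positions $i<p<j$, where $u\,(i,p)$ reads $(u(p),u(i),u(j))$ and $v$ reads $(u(j),u(p),u(i))$, the three required comparisons of increasing rearrangements reducing to $u(i)<u(p)<u(j)$ together with the trivial equality of full sets. That verification, together with the routine observation that each recursive call has a strictly smaller value of $k$, is the only real obstacle, and it uses nothing beyond Proposition~\ref{prop:Bruhat order} and Lemma~\ref{lemma:Bruhat order}.
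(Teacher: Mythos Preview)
The paper does not actually prove Proposition~\ref{prop:chain}; it is stated in Section~\ref{sec:2} as a standard background fact about the Bruhat order (with the general reference~\cite{BB}), so there is no ``paper's own proof'' to compare against.

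Your argument is correct. A couple of minor remarks. First, the $k=1$ base case does not need the extra claim that $v=u(a,b)$: the statement only asks for a chain with prescribed lengths, and $u=v_0\prec v_1=v$ already is one. Second, in Case~2 your three-step chain is fine, but note that the middle element $u(i,p)(i,j)$ may well equal $v$ already (this happens exactly when $p$ is the unique index with $u(i)<u(p)<u(j)$ between $i$ and $j$), so it is cleaner to stop after producing $v'=u(i,p)$ and verifying $u\prec v'\prec v$ via Lemma~\ref{lemma:Bruhat order}, exactly as you do in your alternative paragraph, rather than insisting on the full three-step decomposition. None of this affects correctness: once $u\prec v'\prec v$ with $\ell(u)<\ell(v')<\ell(v)$ is established, the induction goes through.
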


\subsection{Flag varieties, Schubert varieties, and Opposite Schubert varieties.} 

The flag variety $\flag{n}$ is the homogeneous space $\GL_n(\mathbb{C})/B$, where $B$ is the Borel subgroup of upper triangular matrices in $\GL_n(\mathbb{C})$. The flag variety is a smooth projective variety of (complex) dimension $\binom{n}{2}$, and it can be identified with the set
\[\flag{n}\coloneqq\{(\{0\}\subset V_1 \subset V_2\subset\cdots \subset V_n=\C^n)\mid \dim_{\C}V_i=i\text{ for }i=1,\dots,n\}\]
of chains of subspaces of $\C^n$. Each element of $\flag{n}$ is called a flag.
For instance, an element $w \in \Sn{n}$ defines a flag given by
$$(\{0\}\subset\langle \ve_{w(1)}\rangle \subset \langle \ve_{w(1)},\ve_{w(2)}\rangle\subset\cdots\subset V_n =\C^n),$$
where $\ve_1,\dots,\ve_n$ are the standard basis vectors in $\C^n$. We denote by $wB$ this standard coordinate flag.

Let $G=\GL_n(\C)$ and $B^-$ be the Borel subgroup of lower triangular matrices in $G$.
Then, the left action of $B$ (respectively, $B^-$) on $G/B$ has finitely many orbits $BwB/B$ (respectively, $B^- wB/B$), where $w$ is a permutation in $\mathfrak{S}_n$, and we get a cell decomposition called the \emph{Bruhat decomposition}:
\begin{equation}\label{eq:flag-decomposition}
    G/B= \bigsqcup_{w \in \mathfrak{S}_n} BwB/B\,= \bigsqcup_{w \in \mathfrak{S}_n} B^- wB/B\,.
\end{equation}
For each $w\in\mathfrak{S}_n$, the cells $BwB/B$ and $B^-wB/B$ are isomorphic to $\C^{\ell(w)}$ and $\C^{\binom{n}{2}-\ell(w)}$, respectively. We call $\Xwo{w}\coloneqq BwB/B$ the \emph{Schubert cell} and $\Owo{w}\coloneqq B^-wB/B$ the \emph{opposite Schubert cell} indexed by~$w$. The (Zariski) closures $\Xw{w}\coloneqq \overline{\Xwo{w}}$ and $\Ow{w}\coloneqq \overline{\Owo{w}}$ are called the \emph{Schubert variety} and the \emph{opposite Schubert variety} indexed by~$w$, respectively. Notably, $\Ow{w}=w_0\Xw{w_0w}$ because $B^{-}=w_0Bw_0$, and we focus our attention on opposite Schubert varieties throughout the paper.

For $v,w\in\Sn{n}$, we have
\[v\prec w\quad \text{ if and only if }\quad \Ow{w} \subset \Ow{v},\]
and we obtain a cell decomposition of $\Ow{w}$ as follows:
\begin{equation}\label{eq:Schubert-decomposition}
    \Ow{w}= \bigsqcup_{v\succeq w} \Owo{v}\,.
\end{equation}
An opposite Schubert variety is not necessarily smooth. As stated in Theorem~\ref{thm:known_smoothness}, $\Ow{w}$ is smooth if and only if $w$ avoids patterns $2143$ and $1324$. \footnote{Note that $\Ow{w}=w_0X_{w_0w}$ and $w_0$ is an isomorphism. By \cite{LSa}, a Schubert variety $X_w$ is smooth if and only if $w$ avoids $3412$ and $4231$. Hence $\Ow{w}$ is smooth if and only if $w$ avoids $2143$ and $1324$.}

We refer the reader to \cite{Brion2005,Fulton} and the references therein for the geometry and combinatorics related to flag varieties and their subvarieties.

\subsection{Torus actions and GKM varieties.}
Let $X$ be a complex projective algebraic variety with an action of an algebraic torus $T\cong (\C^\ast)^n$. Then, the variety $X$ is called a \emph{GKM (Goresky--Kottwitz--MacPherson) space} if it satisfies the following three conditions:
\begin{itemize}
    \item The fixed point set $X^T$ consists of isolated points.
    \item There are finitely many one-dimensional orbits of $T$ on $X$.
    \item The space $X$ is equivariantly formal with respect to the action of $T$.
\end{itemize}
Note that the condition of being equivariantly formal is rather technical.
However, \eqref{eq:flag-decomposition} and \eqref{eq:Schubert-decomposition} imply that the odd-degree cohomology groups of \(\flag{n}\) and \(\Omega_w\) vanish, and hence both are equivariantly formal with respect to any torus action \cite[Theorem~14.1]{GKM}. We refer the reader to \cite{GKM, GZ, GT} for more on GKM theory.

For a GKM variety $X$, the boundary of each one-dimensional $T$-orbit contains two $T$-fixed points and the closure of each one-dimensional orbit is isomorphic to $\C P^1$ with fixed points at the north and the south poles. Based on the zero- and one-dimensional orbits of a GKM variety, we construct the GKM graph $\Gamma$ of $X$ as follows:
\begin{enumerate}
    \item the vertex set of $\Gamma$ is identified with the set $X^T$ of $T$-fixed points; and
    \item two vertices $p$ and $q$ of $X^T$ are connected by an edge if there exists a one-dimensional orbit whose closure has $p$ and $q$ as the $T$-fixed points.
\end{enumerate}

Now, we let $T$ be the set of diagonal matrices in $G=\GL_n(\mathbb{C})$. Then $T$ is isomorphic to $(\C^\ast)^n$ and acts on $G/B$ by left multiplication, and the set of $T$-fixed points of $G/B$ consists of the standard coordinate flags. Thus, we identify $(G/B)^T$ with $\Sn{n}$. Furthermore, the flag variety $\flag{n}$ with the action of $T$ becomes a GKM space, and the GKM graph $\Gamma$ of $\flag{n}$ is the Bruhat graph for $\Sn{n}$.

For every $w\in \mathfrak{S}_n$, the opposite Schubert variety $\Ow{w}$ is a GKM subspace of $\flag{n}$. The set of $T$-fixed points of $\Ow{w}$ is identified with the Bruhat interval $[w,w_0]$ and the GKM graph of $\Ow{w}$ is the subgraph of $\Gamma$ induced by $[w,w_0]$. As stated in Theorem~\ref{thm:known_smoothness}, $\Ow{w}$ is smooth if and only if the subgraph of $\Gamma$ induced by $[w,w_0]$ is regular.\footnote{Note that this holds only for Schubert varieties in flag varieties of simply laced reductive algebraic groups of type $A$, $D$, or $E$. In general, a Schubert variety is rationally smooth if and only if its GKM graph is regular \cite{Carrell}. However, for a non-simply laced type, not every rationally smooth Schubert variety is smooth. For example, $X_{s_1s_2s_1}$ is rationally smooth but not smooth in type~$C_3$. See~\cite[Chapter~6]{BL}.}

\subsection{Hessenberg varieties.}

Hessenberg varieties were introduced by De Mari, Shayman, and Procesi~\cite{DPS} in the study of the geometry of the adjoint representation of $\GL_n(\mathbb{C})$.  
They have since been extensively studied from various perspectives; see \cite{Tymoczko2006,AHMMS,Precup2} for further developments.

A \emph{Hessenberg function} is a nondecreasing function $h\colon [n] \to [n]$ satisfying $h(i)\geq i$ for each $i\in [n]$. We often use the list of values $(h(1), h(2), \dots, h(n))$ of $h$ to represent a Hessenberg function $h\colon [n] \to [n]$. Moreover, we can depict a Hessenberg function using a graph. For each Hessenberg function $h$, the graph with the vertex set $[n]$ and the edge set $\{\{i,j\} \mid 1\leq i<j\leq h(i) \}$ is called the \emph{incomparability graph} $\mathrm{inc}(h)$ of $h$.\footnote{The incomparability graph will be used later when introducing the associated patterns in Definitions~\ref{def:pattern4} and~\ref{def:pattern5}.} See Figure~\ref{fig:incomparability} for example.

\begin{figure}[ht]
    \centering
    \begin{tikzpicture}
        \foreach \i/\j in {1/i,2/j,3/k,4/\ell}{
		\coordinate (\i) at (-5+\i,0);
		\filldraw[color=black] (\i) circle (2pt);
		\node[below] at (\i) {$\i$};}
		\draw[thick] (1) -- (4);
		\draw[thick, bend left=80] (1) edge (3);
		\draw[thick, bend left=80] (2) edge (4);
    \end{tikzpicture}
    \caption{The incomparability graph $\mathrm{inc}(h)$  of $h=(3,4,4,4)$}
    \label{fig:incomparability}
\end{figure}

Let $S$ be a regular semisimple linear operator on $\C^n$. That is, a Jordan form of $S$ is a diagonal matrix with $n$ distinct eigenvalues. A regular semisimple \emph{Hessenberg variety} determined by $h$ and $S$ is defined as 
$$\Hess(S, h)\coloneqq \{(\{0\}\subset V_1 \subset \cdots \subset V_n) \in \flag{n} \,\, |\,\, S(V_i)\subseteq V_{h(i)} \mbox{ for } i=1, \dots, n \} \,.$$ Then $\Hess(S,h)$ is a smooth projective variety of (complex) dimension $d_h\coloneqq \sum_i (h(i)-i)$, see~\cite[Theorem~6]{DPS}.

\begin{remark}\label{remark:Hessenberg function}
\begin{enumerate}
    \item When $h=(n, n, \dots, n)$,  $\Hess(S, h)$ is the flag variety $\flag{n}$.
    \item If we associate a Hessenberg function $h$ with the space
    \[H(h)=\{(a_{i,j})_{1\leq i,j\leq n} \in \mathrm{Mat}_{n\times n}(\mathbb{C})\mid a_{i,j}=0\text{ for }i>h(j)\},\] then we can get an alternative definition of Hessenberg varieties given in~\cite{DPS}: \[\Hess(S,h)=\{gB\in \GL_n(\C)/B\mid g^{-1}Sg\in H(h)\}.\] From this definition, it follows easily that for any $g\in\GL_n(\mathbb{C}),$ we have \[\Hess(g^{-1}Sg,h)=g^{-1}\Hess(S,h),\] which is isomorphic to $\Hess(S,h)$.
	\item A regular semisimple Hessenberg variety $\Hess(S,h)$ is connected if and only if $h(i)\ge i+1$ for all $i<n$, or equivalently, if the incomparability graph $\mathrm{inc}(h)$ is connected. If $h(i)=i$ for some $i$, then the connected components of $\Hess(S,h)$ are smaller Hessenberg varieties. See~\cite{Precup2} for more general cases.
\end{enumerate}
\end{remark}
From now on, we assume that $S$ is a diagonal matrix with distinct eigenvalues. 
It was established in~\cite[Section~III]{DPS} that the Hessenberg variety \(\Hess(S,h)\) admits an affine cell decomposition:
\begin{equation}\label{eq:Hess_paving}
\Hess(S,h)
= \bigsqcup_{w \in \mathfrak{S}_n} \bigl(\Omega_w^{\circ} \cap \Hess(S,h)\bigr).
\end{equation}
For each $w\in\Sn{n}$, the intersection $\Omega_{w, h}^\circ \coloneqq  \Omega^{\circ}_w \cap \Hess(S, h)$ is isomorphic to $\C^{d_h-\ell_h(w)}$, and it is called the \emph{opposite Hessenberg Schubert cell} indexed by $w$, where $$\ell_h(w)= \left|  \{ (i,j) \mid i<j,\,\,w(i)> w(j), \,\, j\leq h(i)   \}  \right|\,.$$

In this paper, we focus on regular semisimple Hessenberg varieties and opposite Hessenberg Schubert cells. For simplicity, we omit the term `opposite' when we deal with (opposite) Hessenberg Schubert cells $\Ow{w,h}^\circ$.

\begin{proposition}[\cite{T2}] A regular semisimple Hessenberg variety $ \Hess(S, h)$  is a $T$-stable subvariety of $\flag{n}$, and it is a GKM variety with the associated GKM graph $\Gamma_h=(V, E)$, where $V= \mathfrak{S}_n$ and $E=\{ \{u, v\} \mid v=u(i,j) \mbox{ for }1\leq  i<j\leq h(i) \}$.
\end{proposition}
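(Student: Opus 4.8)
The plan is to verify the three defining properties of a GKM variety one at a time and then read off the GKM graph from the one‑dimensional orbits. Throughout I would take $S=\mathrm{diag}(\lambda_1,\dots,\lambda_n)$ with the $\lambda_i$ pairwise distinct, which is exactly the situation in which the diagonal torus $T$ acts on $\Hess(S,h)$: the identity $t\cdot\Hess(S,h)=\Hess(S,h)$ amounts to $t$ centralizing $S$, and conversely such a $t$ commutes with $S$, so that $S(V_i)\subseteq V_{h(i)}$ gives $S(tV_i)=tS(V_i)\subseteq tV_{h(i)}$. Since moreover the conditions $S(V_i)\subseteq V_{h(i)}$ are Zariski closed, this already shows that $\Hess(S,h)$ is a closed $T$‑invariant subvariety of $\flag{n}$; equivariant formality was noted above from the affine paving, so one of the three GKM conditions is free.

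For the fixed points, since $(\flag{n})^T$ is the set of coordinate flags $wB$ with $w\in\Sn{n}$, I would observe that a diagonal $S$ preserves every coordinate subspace, so $S(\langle\ve_{w(1)},\dots,\ve_{w(i)}\rangle)\subseteq\langle\ve_{w(1)},\dots,\ve_{w(i)}\rangle\subseteq V_{h(i)}$ because $h(i)\ge i$. Hence every coordinate flag lies in $\Hess(S,h)$, and so $\Hess(S,h)^T$ is the finite set of isolated points identified with $\Sn{n}$.

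The substantive step is the one‑dimensional orbits. Since $\Hess(S,h)$ is $T$‑invariant, its one‑dimensional $T$‑orbits are precisely those one‑dimensional $T$‑orbits of $\flag{n}$ that lie inside it, and the latter are classical: for $w\in\Sn{n}$ and $1\le i<j\le n$ there is the curve $C_{w,i,j}$ whose points are the flags obtained from $wB$ by replacing, at step $i$, the vector $\ve_{w(i)}$ by $\ve_{w(i)}+s\ve_{w(j)}$ for $s\in\C$ (so that only $V_i,\dots,V_{j-1}$ change); its closure is a $\C P^1$ joining $wB$ and $w(i,j)B$, and these curves realize exactly the edges of the Bruhat graph recalled above. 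I would then decide for which $(w,i,j)$ we have $C_{w,i,j}\subseteq\Hess(S,h)$. Writing $V^{(s)}_\bullet$ for the flag with parameter $s$, the condition $S(V^{(s)}_k)\subseteq V^{(s)}_{h(k)}$ is automatic for $k<i$ and for $k\ge j$, while for $i\le k<j$ the vector $\lambda_{w(i)}\ve_{w(i)}+s\lambda_{w(j)}\ve_{w(j)}$ occurs in $S(V^{(s)}_k)$; it lies in $V^{(s)}_{h(k)}$ for every $s$ exactly when $h(k)\ge j$, since then $V^{(s)}_{h(k)}=V_{h(k)}$ contains both $\ve_{w(i)}$ and $\ve_{w(j)}$, whereas if $h(k)<j$ then $V^{(s)}_{h(k)}$ involves $\ve_{w(j)}$ only through the combination $\ve_{w(i)}+s\ve_{w(j)}$, and matching coefficients forces $\lambda_{w(i)}=\lambda_{w(j)}$, impossible for $s\neq0$. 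As $h$ is nondecreasing, ``$h(k)\ge j$ for all $i\le k<j$'' is equivalent to $h(i)\ge j$. Hence $C_{w,i,j}\subseteq\Hess(S,h)$ if and only if $i<j\le h(i)$, so there are only finitely many one‑dimensional orbits, which completes the verification that $\Hess(S,h)$ is a GKM variety.

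Finally I would read off the graph: the vertex set is $\Sn{n}$, and the undirected edge associated with $C_{w,i,j}$ joins $w$ to $w(i,j)$, so $E=\{\{u,v\}\mid v=u(i,j)\text{ for some }1\le i<j\le h(i)\}$, as claimed. The only real work is the curve computation in the third step; the main obstacle there — and the point where regular semisimplicity is indispensable — is showing that a deformed flag $V^{(s)}_\bullet$ must leave $\Hess(S,h)$ once $j>h(i)$, which is precisely what the distinctness of the eigenvalues of $S$ guarantees. Everything else is bookkeeping with the standard $T$‑orbit structure of the flag variety.
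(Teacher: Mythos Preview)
The paper does not prove this proposition; it is quoted as a result of Tymoczko~[T2], so there is no in-paper argument to compare against. Your verification is correct and follows the standard route: $T$-invariance from the commutation of $T$ with the diagonal $S$, equivariant formality from the affine paving already recorded in the paper, inclusion of all coordinate flags because $S$ preserves every coordinate subspace, and the explicit curve computation for the one-dimensional orbits. The key step---showing that the deformed flag $V^{(s)}_\bullet$ stays in $\Hess(S,h)$ for all $s$ precisely when $j\le h(i)$, by matching coefficients of $\ve_{w(i)}$ and $\ve_{w(j)}$ and invoking distinctness of eigenvalues---is carried out cleanly and is exactly where regular semisimplicity enters.
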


Note that $\Gamma_h$ is independent of the choice of $S$.

The closure $\Omega_{w, h}\coloneqq \overline{\Omega_{w, h}^\circ}=\overline{\Omega_{w}^\circ \cap \Hess(S,h)}$, is called the \emph{(opposite) Hessenberg Schubert variety} indexed by~$w$. Then $\Ow{w,h}$ is contained in $\overline{\Owo{w}}\cap\overline{\Hess(S,h)}=\Ow{w}\cap\Hess(S,h)$, so it is an irreducible component of 
\begin{equation}\label{eq:cell-decomp}
    \Ow{w}\cap \Hess(S,h)= \bigsqcup_{v \succeq w}\Owo{v,h}\,.
\end{equation}
Since $\Ow{w}$ and $\Hess(S,h)$ are $T$-stable subvarieties of $\flag{n}$ and $\Ow{w,h}$ is irreducible, $\Ow{w,h}$ is also $T$-stable. In the following, we characterize $\Ow{w,h}^T$.

\begin{proposition}[\cite{CHL}, \cite{CHL2}, \cite{HP}]\label{prop:h-fixed points} Let $h$ be a Hessenberg function on $[n]$. For a permutation $w\in \mathfrak S_n$, let $\widetilde{w}$ be the unique permutation in $\gen$ satisfying~\eqref{eq:generator}. Then the following statements hold.
\begin{enumerate}
    \item If $w\in\gen$, then $\Omega_{w, h}^T$ is identified with the Bruhat interval $[w, w_0]$. 
    \item If $w\not\in\gen$, then $\Omega_{w, h}^T$ is identified with the set $w\widetilde{w}^{-1}[\widetilde{w}, w_0]$.
\end{enumerate}
In particular, $\Ow{w,h}^T$ and $\Ow{\widetilde{w},h}^T$ have the same cardinality.
\end{proposition}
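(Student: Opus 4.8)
The plan is to dispose of the easy containments, reduce the content of (1) to a combinatorial statement about generators together with one geometric input, and then deduce (2) from (1). Since $S$ is diagonal, every coordinate flag is $T$-fixed, so $\Hess(S,h)^T=\mathfrak S_n$; as $\Owh{w}=\overline{\Owho{w}}$ lies in the closed $T$-invariant set $\Ow{w}\cap\Hess(S,h)$, we get $\Owh{w}^T\subseteq(\Ow{w}\cap\Hess(S,h))^T=\Ow{w}^T=[w,w_0]$, and $w\in\Owh{w}^T$ since $w$ is the unique $T$-fixed point of $\Owho{w}$. Thus (1) amounts to the inclusion $[w,w_0]\subseteq\Owh{w}^T$ for a generator $w$, and I would prove it by showing that $\Owho{w}$ is already dense in $\Ow{w}\cap\Hess(S,h)=\bigsqcup_{v\succeq w}\Owho{v}$ (where $\dim\Owho{v}=d_h-\ell_h(v)$), so that $\Owh{w}=\Ow{w}\cap\Hess(S,h)$ and the fixed-point statement becomes automatic.

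The combinatorial heart is that for a generator $w$ one has $\ell_h(v)>\ell_h(w)$ for every $v\succ w$; by the chain property it is enough to treat a Bruhat cover $v=w(i,j)$ of $w$, and there I would show $j\le h(i)$ — so the transposition is an edge of $\Gamma_h$ — and in fact $\ell_h(v)=\ell_h(w)+1$. If $j>h(i)$, the generator inequality $w^{-1}(w(\ell)+1)\le h(\ell)$ at $\ell=i$ forces $w(i)+1$ to a position $\le h(i)<j$, whence $w(j)\ge w(i)+2$, and the covering condition (no value strictly between $w(i)$ and $w(j)$ at a position strictly between $i$ and $j$) then forces that position to be $<i$; iterating at the positions of $w(i)+1,w(i)+2,\dots$ (all $<i$, so $h$ there is $\le h(i)<j$) puts all values strictly between $w(i)$ and $w(j)$ at distinct positions $<i$, ending with the absurdity $w^{-1}(w(j))=j\le h(i)$. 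The sharper claim $\ell_h(v)=\ell_h(w)+1$ then follows by a similar use of the generator inequalities, checking that no ``moved'' inversion — one through a position $m$ with $w(i)<w(m)<w(j)$ — is $h$-counted for exactly one of $w$ and $v$. Hence $\Owho{w}$ is the unique top-dimensional cell of $\Ow{w}\cap\Hess(S,h)$.

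The main obstacle is upgrading ``unique top-dimensional cell'' to ``$\Owho{w}$ dense'', since an affine paving with a unique maximal cell need not have irreducible total space; one must control how the cells $\Owho{v}$, $v\succ w$, attach to $\overline{\Owho{w}}$. I would prove $[w,w_0]=[w,w_0]_h$ for a generator $w$ — every $v\succeq w$ is reachable from $w$ by a saturated chain of covers that are all edges of $\Gamma_h$ — by descending, from a given $v$, a Bruhat cover $u\lessdot v$ with $u\succeq w$ and $h$-admissible transposition (this is the delicate point, where the generator condition on $w$ restricts the covers available to elements of $[w,w_0]$) and then inducting via the $T$-stable $\mathbb P^1$'s of $\Hess(S,h)$ carried by those edges to obtain $v\in\Owh{w}$. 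A cleaner alternative bypasses irreducibility: $\Owh{w}$ is an irreducible $T$-invariant subvariety of the GKM variety $\Hess(S,h)$, so the support of its equivariant class $\swh{w}$ equals $\Owh{w}^T$ (nonvanishing of $\swh{w}|_v$ for $v\in\Owh{w}^T$ is positivity of the multidegree of the tangent cone $C_v\Owh{w}$); for a generator $w$ the combinatorial formula for $\swh{w}|_v$ is nonzero exactly when the inequalities $w[k]\!\!\uparrow\le v[k]\!\!\uparrow$ of Proposition~\ref{prop:Bruhat order} hold, i.e.\ exactly for $v\in[w,w_0]$.

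For (2), property \eqref{eq:generator} says $w$ and $\widetilde w$ induce the same relative order on every pair $i<j$ with $j\le h(i)$, hence have the same $h$-inversions, so $\ell_h(w)=\ell_h(\widetilde w)$ and $\Owho{w}$, $\Owho{\widetilde w}$ are isomorphic affine spaces; the claim is that this extends to an isomorphism $\Owh{\widetilde w}\xrightarrow{\sim}\Owh{w}$ equivariant for the coordinate permutation $\pi=w\widetilde w^{-1}$ of $T$, which on $T$-fixed points is left multiplication $v\mapsto\pi v$. Combined with part (1) this gives $\Owh{w}^T=\pi\cdot[\widetilde w,w_0]=w\widetilde w^{-1}[\widetilde w,w_0]$, and the equal-cardinality statement is then immediate. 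The point needing care is that $\pi$ does not preserve $\Hess(S,h)$ — it carries it to the Hessenberg variety of the reindexed operator — so the isomorphism and its equivariance must be produced at the level of the cells and axial data, or equivalently one compares the localization criterion for $\Owh{w}^T$ directly with that for $\Owh{\widetilde w}^T$.
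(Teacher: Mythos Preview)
The paper does not supply its own proof of this proposition; it is quoted from \cite{CHL} and \cite{HP}. Your Route (b) --- identifying $\Owh{w}^T$ with the support of the equivariant class $\swh{w}$ and computing that support combinatorially --- is exactly the approach of \cite{CHL}, and the outline you give there is sound; that is the part of your proposal that actually constitutes a proof.

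The dimension argument and Route (a), by contrast, have gaps you flag but do not close. Two deserve to be named precisely. First, the reduction ``by the chain property it is enough to treat a Bruhat cover of $w$'' is incorrect: a saturated chain $w=v_0\lessdot v_1\lessdot\cdots\lessdot v_k=v$ exists, but to conclude $\ell_h(v)>\ell_h(w)$ you would need $\ell_h$ to increase along every cover in the chain, and the intermediate $v_i$ are typically not generators, so your cover argument (which genuinely uses the generator hypothesis on the \emph{lower} element) does not iterate. Second, the inductive geometric step in Route (a) --- ``$u\in\Owh{w}$ and $\{u,v\}$ an $h$-edge, hence $v\in\Owh{w}$'' --- is unjustified: the $T$-stable $\mathbb P^1$ carried by that edge lies in $\Hess(S,h)$, but nothing forces it into the closed subvariety $\Owh{w}$ merely because one of its two fixed points does. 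Note also that the present paper establishes $[w,w_0]=[w,w_0]_h$ (Theorem~\ref{thm:interval}) only \emph{after} accepting Proposition~\ref{prop:h-fixed points}, via Proposition~\ref{prop:h-interval}; a self-contained combinatorial proof along your Route (a) would be new. For part (2) you correctly diagnose that $\pi=w\widetilde w^{-1}$ does not preserve $\Hess(S,h)$ and do not resolve it; the argument in \cite{HP} works instead through an explicit coordinate description of the cells and their closures.
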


It should be noted that if $w\not\in\gen$, then $\Ow{w,h}^T$ does not form an interval. 

\begin{example}
    When $h=(3, 3, 4, 4)$, we get  
$$\gen=\{1234,1423,2134,2341,2431,3241,3412,3421,4123,4231,4312,4321\}.$$ Note that $w=1324\not\in\gen$ because $h(2)=3<w^{-1}(4)=4$, and $\widetilde{w}=1423$ is the unique permutation~in $\gen$ satisfying~\eqref{eq:generator}. In this case, $w_0\in \Ow{w,h}^T$, but the permutation $1432\not\in\Ow{w,h}^T$ even though 
$w\prec 1432 \prec w_0$.
\end{example}

The smoothness of the intersection of a Schubert variety indexed by $w\in\gen$ and the Hessenberg variety implies the smoothness of the Hessenberg Schubert varieties indexed by the permutations in the same class in the following way. Recall that we fix a diagonal matrix $S$ with distinct eigenvalues and the Hessenberg Schubert varieties $\Ow{w,h}$ are defined inside $\Hess(S,h)$.

\begin{proposition}\label{prop:intersection_smooth} For $w\in\Sn{n}$,  let $\widetilde{w}$ be the unique permutation in $\gen$ satisfying~\eqref{eq:generator}. Then the following statements hold:
\begin{enumerate}
    \item If \(\Omega_{\widetilde{w}} \cap \Hess(S,h)\) is smooth, then so is \(\Omega_{\widetilde{w},h}\). In particular, if \(\Omega_{\widetilde{w}} \cap \Hess(S,h)\) is connected, then
\(
\Omega_{\widetilde{w}} \cap \Hess(S,h) = \Omega_{\widetilde{w},h}.
\)
    \item If $\Omega_{\widetilde{w}}\cap \Hess(u^{-1}Su,h)$ is smooth, then $\Omega_{w,h}$ is smooth, where $u=w\widetilde{w}^{-1}$.
\end{enumerate}
\end{proposition}
\begin{proof}
\begin{enumerate}
    \item Note that $\Owh{\widetilde{w}}$ is an irreducible component of $\Omega_{\widetilde{w}}\cap\Hess(S,h)$. If $\Omega_{\widetilde{w}}\cap\Hess(S,h)$ is smooth, then every connected component is smooth and irreducible, so $\Owh{\widetilde{w}}$ is smooth.
    \item It is shown in \cite[Proposition 3.14]{HP}, \cite[Lemma 4.5]{CHL}, and \cite[Proposition~3.8]{CHL2} that $\Owh{w}=u(\overline{\Ow{\widetilde{w}}^\circ\cap \Hess(u^{-1}Su,h)})$ where $u=w\widetilde{w}^{-1}$. Note that $\Owh{\widetilde{w}}=\overline{\Ow{\widetilde{w}}^\circ\cap \Hess(S,h)}$ and $u^{-1}Su$ is obtained from $S$ by permuting the diagonal entries. If $\Omega_{\widetilde{w}}\cap \Hess(u^{-1}Su,h)$ is smooth, then $\overline{\Ow{\widetilde{w}}^\circ\cap \Hess(u^{-1}Su,h)}$ is smooth by (1). Hence, $\Owh{w}$ is also smooth because the left multiplication by~$u$ preserves the smoothness.
\end{enumerate}
\end{proof}

We next describe several properties of elements $w\in\gen$.
\begin{lemma}\label{lemm:gen_irr}
If $w\not\in\gen$, the intersection $\Ow{w}\cap\Hess(S,h)$ is reducible.
\end{lemma}
\begin{proof}
Note that $$(\Ow{w}\cap \Hess(S,h))^T=\Omega_w^T\cap\Hess(S,h)^T=\Omega_w^T=[w,w_0]$$ since $\Hess(S,h)^T=\mathfrak{S}_n$. By \cite[Lemma~2.11]{HP}, if $\widetilde{w}$ is the element $\gen$ corresponding to $w\in\mathfrak{S}_n$, then $w\leq\widetilde{w}$ in the Bruhat order. It follows from Proposition~\ref{prop:h-fixed points} that $\Ow{w,h}^T = [w,w_0]$ if and only if $w\in\gen$. Hence, if $w\not\in\gen$, we obtain $$\Omega_{w,h}^T=w\widetilde{w}^{-1}[\widetilde{w},w_0]\subsetneq [w,w_0]= (\Omega_w\cap \Hess(S,h))^T,$$ so $\Omega_{w,h}\subsetneq \Omega_w\cap\Hess(S,h)$. Therefore, the intersection $\Ow{w} \cap \Hess(S,h)$ is reducible. 
\end{proof}
Note that $\Ow{w}\cap\Hess(S,h)$ can be reducible even if $w\in\gen$. For instance, when  $h=(3,3,4,4)$ and $w=2134\in\gen$, the intersection $\Ow{w}\cap\Hess(S,h)$ is connected but reducible, see~\cite[Example~6.5]{HLP}. In this case, the corresponding opposite Hessenberg Schubert variety $\Ow{w,h}$ is smooth.

We let $\Gamma_{w,h}$ be the subgraph of $\Gh$ induced by the torus fixed point set $\Owh{w}^T$. Then for $w\in\gen$, $\Gamma_{w,h}$ becomes the GKM graph of  $\Ow{w}\cap \Hess(\overline{S},h)$ by~\eqref{eq:cell-decomp} for any diagonal matrix~$\overline{S}$ with distinct eigenvalues. As we will see below, $\Ow{w,h}^T$ and $\Ow{\widetilde{w},h}^T$ induce the isomorphic subgraphs of $\Gamma_h$.

\begin{lemma}\label{lemm:iso}
    For $w\in\Sn{n}$, $\Gamma_{w,h}$ is isomorphic to $\Gamma_{\widetilde{w},h}$.
\end{lemma}
\begin{proof}
    For each $w\in \Sn{n}$, by the definition of $\Gamma_{w,h}$, there is an edge $\{u,v\}\in E(\Gamma_{w,h})$ if and only if there exists a transposition $(i,j)$ such that $u(i,j)=v$ and $1\leq i<j\leq h(i)$. If $w\not\in\gen$, then $\Ow{w,h}^T=\{w\widetilde{w}^{-1}u\mid u\in [\widetilde{w},w_0]\}$ by Proposition~\ref{prop:h-fixed points}. Therefore, for $u'=w\widetilde{w}^{-1}u$ and $v'=w\widetilde{w}^{-1}v$ in $\Ow{w,h}^T$, $\{u',v'\}$ is an edge of $\Gamma_{w,h}$ if and only if $\{u,v\}$ is an edge of $\Gamma_{\widetilde{w},h}$.
\end{proof}

As an example, Figure~\ref{fig:1324} shows the case $h=(3,3,4,4)$ and $w=1324$, where both $\Gamma_{w,h}$ and $\Gamma_{\widetilde{w},h}$ are isomorphic to the graph of the hexagonal prism.

\begin{figure}
	\begin{center}
    \begin{subfigure}[b]{0.45\textwidth}
    \centering
	\begin{tikzpicture}[scale=.4]
		\coordinate (0) at (0,0);
		\foreach \i/\j in {-3/1,0/2,3/3}{
		\coordinate (\j) at (\i,2);}
		\foreach \i/\j in {-6/4,-3/5,0/6,3/7,6/8}{
		\coordinate (\j) at (\i,4);}
		\foreach \i/\j in {-7.5/9,-4.5/10,-1.5/11,1.5/12,4.5/13,7.5/14}{
		\coordinate (\j) at (\i,6);}
		\foreach \i/\j in {-6/15,-3/16,0/17,3/18,6/19}{
		\coordinate (\j) at (\i,8);}
		\foreach \i/\j in {-3/20,0/21,3/22}{
		\coordinate (\j) at (\i,10);}
		\coordinate (23) at (0,12);
		\draw[color=gray] (0) -- (1);
		\draw[color=gray] (0) -- (2);
		\draw[color=gray] (0) -- (3);
		\draw[color=gray] (0) -- (14);
		\draw[color=gray] (1) -- (4);
		\draw[color=gray] (1) -- (6);
		\draw[color=gray, bend left=10] (1) edge (16);
		\draw[color=gray] (2) -- (5);
		\draw[color=gray] (2) -- (7);
		\draw[color=gray] (2) -- (8);
		\draw[ultra thick, color=black] (4) -- (9);
		\draw[ultra thick, color=black] (4) -- (10);
		\draw[ultra thick, color=black] (4) -- (11);
		\draw[color=gray] (5) -- (9);
		\draw[color=gray] (5) -- (12);
		\draw[color=gray, bend right=10] (5) edge (20);
		\draw[ultra thick, color=black] (9) -- (15);
		\draw[ultra thick, color=black] (9) -- (17);
		\draw[color=gray] (3) -- (6);
		\draw[color=gray] (3) -- (7);
		\draw[color=gray] (3) -- (8);
		\draw[color=gray] (6) -- (10);
		\draw[color=gray] (6) -- (11);
		\draw[color=gray] (7) -- (12);
		\draw[color=gray] (7) -- (14);
		\draw[color=gray] (8) -- (13);
		\draw[color=gray] (8) -- (14);
		\draw[ultra thick, color=black] (10) -- (15);
		\draw[ultra thick, color=black] (10) -- (16);
		\draw[ultra thick, color=black] (11) -- (16);
		\draw[ultra thick, color=black] (11) -- (18);
		\draw[color=gray] (12) -- (15);
		\draw[color=gray] (12) -- (17);
		\draw[color=gray] (13) -- (18);
		\draw[color=gray] (13) -- (19);
		\draw[color=gray, bend right=10] (13) edge (23);
		\draw[color=gray] (14) -- (19);
		\draw[ultra thick, color=black] (15) -- (20);
		\draw[ultra thick, color=black] (16) -- (21);
		\draw[ultra thick, color=black] (17) -- (20);
		\draw[ultra thick, color=black] (17) -- (22);
		\draw[ultra thick, color=black] (18) -- (21);
		\draw[ultra thick, color=black] (18) -- (22);
		\draw[color=gray] (19) -- (21);
		\draw[color=gray] (19) -- (22);
		\draw[ultra thick, color=black] (20) -- (23);
		\draw[ultra thick, color=black] (21) -- (23);
		\draw[ultra thick, color=black] (22) -- (23);
		\foreach \i/\j in {0/1234, 1/1243, 2/1324, 3/2134, 4/1423, 5/1342, 6/2143, 7/3124, 8/2314}{
		\node[below] at (\i) {\scriptsize \j};} 
		\foreach \i/\j in {9/1432, 10/4123, 11/2413, 12/3142, 13/2341, 14/3214}{
		\node[below] at (\i) {\scriptsize \j};} 
		\foreach \i/\j in {15/4132, 16/4213, 17/3412, 18/2431, 19/3241, 20/4312, 21/4231, 22/3421, 23/4321}{
		\node[above] at (\i) {\scriptsize \j};} 
		\foreach \i in {0,1,...,23}{
		\filldraw[color=purple] (\i) circle (4pt);}
		\foreach \i in {0,1,2,3,5,6,7,8,12,13,14,19}{
		\filldraw[color=gray] (\i) circle (4pt);}
	\end{tikzpicture}
    \end{subfigure}
    \begin{subfigure}[b]{0.45\textwidth}
    \centering
	\begin{tikzpicture}[scale=.4]
		\coordinate (0) at (0,0);
		\foreach \i/\j in {-3/1,0/2,3/3}{
		\coordinate (\j) at (\i,2);}
		\foreach \i/\j in {-6/4,-3/5,0/6,3/7,6/8}{
		\coordinate (\j) at (\i,4);}
		\foreach \i/\j in {-7.5/9,-4.5/10,-1.5/11,1.5/12,4.5/13,7.5/14}{
		\coordinate (\j) at (\i,6);}
		\foreach \i/\j in {-6/15,-3/16,0/17,3/18,6/19}{
		\coordinate (\j) at (\i,8);}
		\foreach \i/\j in {-3/20,0/21,3/22}{
		\coordinate (\j) at (\i,10);}
		\coordinate (23) at (0,12);
		\draw[color=gray] (0) -- (1);
		\draw[color=gray] (0) -- (2);
		\draw[color=gray] (0) -- (3);
		\draw[color=gray] (0) -- (14);
		\draw[color=gray] (1) -- (4);
		\draw[color=gray] (1) -- (6);
		\draw[color=gray, bend left=10] (1) edge (16);
		\draw[ultra thick, color=black] (2) -- (5);
		\draw[ultra thick, color=black] (2) -- (7);
		\draw[ultra thick, color=black] (2) -- (8);
		\draw[color=gray] (4) -- (9);
		\draw[color=gray] (4) -- (10);
		\draw[color=gray] (4) -- (11);
		\draw[color=gray] (5) -- (9);
		\draw[ultra thick, color=black] (5) -- (12);
		\draw[ultra thick, color=black, bend right=10] (5) edge (20);
		\draw[color=gray] (9) -- (15);
		\draw[color=gray] (9) -- (17);
		\draw[color=gray] (3) -- (6);
		\draw[color=gray] (3) -- (7);
		\draw[color=gray] (3) -- (8);
		\draw[color=gray] (6) -- (10);
		\draw[color=gray] (6) -- (11);
		\draw[ultra thick, color=black] (7) -- (12);
		\draw[ultra thick, color=black] (7) -- (14);
		\draw[ultra thick, color=black] (8) -- (13);
		\draw[ultra thick, color=black] (8) -- (14);
		\draw[color=gray] (10) -- (15);
		\draw[color=gray] (10) -- (16);
		\draw[color=gray] (11) -- (16);
		\draw[color=gray] (11) -- (18);
		\draw[color=gray] (12) -- (15);
		\draw[ultra thick, color=black] (12) -- (17);
		\draw[color=gray] (13) -- (18);
		\draw[ultra thick, color=black] (13) -- (19);
		\draw[ultra thick, color=black, bend right=10] (13) edge (23);
		\draw[ultra thick, color=black] (14) -- (19);
		\draw[color=gray] (15) -- (20);
		\draw[color=gray] (16) -- (21);
		\draw[ultra thick, color=black] (17) -- (20);
		\draw[ultra thick, color=black] (17) -- (22);
		\draw[color=gray] (18) -- (21);
		\draw[color=gray] (18) -- (22);
		\draw[color=gray] (19) -- (21);
		\draw[ultra thick, color=black] (19) -- (22);
		\draw[ultra thick, color=black] (20) -- (23);
		\draw[color=gray] (21) -- (23);
		\draw[ultra thick, color=black] (22) -- (23);
		\foreach \i/\j in {0/1234, 1/1243, 2/1324, 3/2134, 4/1423, 5/1342, 6/2143, 7/3124, 8/2314}{
		\node[below] at (\i) {\scriptsize \j};} 
		\foreach \i/\j in {9/1432, 10/4123, 11/2413, 12/3142, 13/2341, 14/3214}{
		\node[below] at (\i) {\scriptsize \j};} 
		\foreach \i/\j in {15/4132, 16/4213, 17/3412, 18/2431, 19/3241, 20/4312, 21/4231, 22/3421, 23/4321}{
		\node[above] at (\i) {\scriptsize \j};} 
		\foreach \i in {0,1,...,23}{
		\filldraw[color=purple] (\i) circle (4pt);}
		\foreach \i in {0,1,3,4,6,9,10,11,15,16,18,21}{
		\filldraw[color=gray] (\i) circle (4pt);}
	\end{tikzpicture}
    \end{subfigure}
	\end{center}
	\caption{The induced subgraphs $\Gamma_{\widetilde{w},h}$ and $\Gamma_{w,h}$ of $\Gamma_{h}$ for a Hessenberg function $h=(3,3,4,4)$ and $w=1324$.}
	\label{fig:1324}
\end{figure}

\begin{proposition}\label{prop:Hessenberg Schubert intersection} 
If $\Omega_w\cap \Hess(S,h)$ is smooth and connected, then $w\in\gen$ and $\Gamma_{v,h}$ is a regular graph for any permutation $v$ with $\widetilde{v}=w$.
\end{proposition}
\begin{proof}
 Suppose that \(w \notin \mathcal{G}_h\).
Then \(\Omega_{w} \cap \Hess(S,h)\) is reducible by Lemma~\ref{lemm:gen_irr}.
Since we assume that it is connected, the intersection \(\Omega_{w} \cap \Hess(S,h)\) cannot be smooth.
This contradicts our assumption.
    
    For the latter part, by Lemma~\ref{lemm:iso}, it suffices to show the regularity of $\Gamma_{w,h}$ for $w\in\gen$. 
    
    Since $\Omega_w\cap\Hess(S,h)$ is a $T$-stable subvariety of $\mathcal{F}\ell(\C^n)$, if it is smooth, it follows from \cite[Corollary~2]{Brion2002} that the number of $T$-stable curves meeting at each $T$-fixed point equals the dimension of $\Owo{w}\cap\Hess(S,h)$. Therefore, $\Gamma_{w,h}$ is a regular graph.
\end{proof}

\section{Properties of  \texorpdfstring{$h$}{h}-Bruhat order and the graphs \texorpdfstring{$\Gamma_{w,h}$}{Gwh}}\label{sec:GKM}

In this section, we first introduce the notion of the $h$-Bruhat order and investigate the properties of the order in relation to the Bruhat order. Subsequently, in the second subsection, we proceed to look at the graph $\Gamma_{w,h}$ for $w\in\gen$; we define a nice injective map from the set of incident edges of $u$ to that of $v$ when $v$ is greater than $u$ in the $h$-Bruhat order. Finally, we use the injection to introduce seven special patterns that a permutation $w\in\gen$ has to avoid for $\Gamma_{w,h}$ to be regular.
\subsection{The $h$-Bruhat order}\label{sec:interval}

Proposition~\ref{prop:h-fixed points} characterizes the $T$-fixed points of Hessenberg Schubert varieties for $w\in\gen$ in relation to the Bruhat order; $\Ow{w,h}^T=[w,w_0]$. It is natural to ask if $\Ow{w,h}^T$ can be characterized in terms of an order on $\mathfrak S_n$, which respects the Hessenberg function $h$ in some way. Indeed, we can think of a natural order that respects $h$: 

\begin{definition}
Let $h\colon [n] \to [n]$ be a Hessenberg function.
 \begin{enumerate}
 \item The \emph{$h$-Bruhat order} on $\mathfrak S_n$ is the transitive closure of the following relation
 $$u\prec_h v \quad \text{ if and only if }\quad v=u(i,j) \text{ for some $(i,  j)$ such that $j\leq h(i)$ and } \ell(v)>\ell(u)\,.$$
 \item For permutations $u\prec_h v$, let $[u, v]_h$ denote the \emph{$h$-Bruhat interval} 
 $\{w \mid u\preceq_h w \preceq_h v\}$. 
 \end{enumerate}
\end{definition}

By definition, the Bruhat order refines the $h$-Bruhat order; that is, if $u\prec_h v$, then $u\prec v$.
The Hasse diagram of the $h$-Bruhat order is connected if and only if $h(i)>i$ for all $i<n$, which, by Remark~\ref{remark:Hessenberg function} (3), is equivalent to the connectedness of $\Hess(S, h)$.
When $h(i)=i$ for some $i<n$, the unique maximum permutation $w_0$ in the Bruhat order is not the maximum in the $h$-Bruhat order: if $h=(2,3,3,5,5)$, then $54231\not\prec_h 54321=w_0$. However, we obtain the following lemma when $w\in\gen$.

\begin{lemma}\label{lem:h-maximum} For a Hessenberg function $h$ and $w\in\gen$, if $w\preceq_h u$, then $u \preceq_h w_0$.
\end{lemma}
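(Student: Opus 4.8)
The plan is to show that if $w$ is a generator and $w \preceq_h u$, then $u$ lies below $w_0$ in the $h$-Bruhat order. The key structural fact I would exploit is Theorem~A (Theorem~\ref{thm:interval}), which we are proving in this very section: for a generator $w$, the set $\Omega_{w,h}^T$ equals both $[w,w_0]$ (by Proposition~\ref{prop:h-fixed points}(1)) and $[w,w_0]_h$. However, to avoid circular dependence I would instead argue directly, using only Proposition~\ref{prop:h-fixed points}(1), namely that $[w,w_0]_h \subseteq [w,w_0]$ and that, since $w$ is a generator, every element of $[w,w_0]$ is an $h$-fixed point of $\Omega_{w,h}$; it then suffices to show that any $u$ with $w \preceq_h u$ can be connected to $w_0$ by an $h$-Bruhat chain.

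First I would reduce to the covering case: by the chain property (Proposition~\ref{prop:chain}) and the definition of the $h$-Bruhat order, it is enough to show that for any $u$ with $w \preceq_h u \prec_h$-maximal but $u \neq w_0$, there is a transposition $(i,j)$ with $j \leq h(i)$, $u(i) < u(j)$, and $w \preceq_h u(i,j)$. So suppose $u \neq w_0$; then $u$ has an ascent, i.e. there is some position $a$ with $u(a) < u(a+1)$. I would take the swap $(a, a+1)$, which is automatically an $h$-swap since $a+1 \leq a+1 \leq h(a)$, and which increases length by exactly one, so $u \prec_h u(a,a+1)$. The point that needs care is that $u(a,a+1)$ still lies above $w$ in the $h$-Bruhat order, equivalently (since $w$ is a generator) that $u(a,a+1) \in [w,w_0]$; this follows because $u \prec u(a,a+1)$ in the ordinary Bruhat order and $w \preceq u$, so $w \preceq u(a,a+1)$, and Proposition~\ref{prop:h-fixed points}(1) then places $u(a,a+1)$ in $\Omega_{w,h}^T$. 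But membership in $\Omega_{w,h}^T$ only tells us $u(a,a+1)$ is connected to $w$ in the graph $\Gamma_{w,h}$, not necessarily by an \emph{increasing} $h$-chain from $w$.

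The main obstacle, then, is precisely this: I must show that $w \preceq_h u(a,a+1)$, i.e. that there is an $h$-Bruhat chain from $w$ up to $u(a,a+1)$, given that there is one up to $u$. The natural approach is to induct on $\ell(w_0) - \ell(u)$ (the "distance from the top"). If $u \neq w_0$, pick an ascent position $a$ of $u$; set $u' = u(a,a+1)$, so $\ell(u') = \ell(u)+1$ and $u \prec_h u'$. Then $u'$ is strictly closer to $w_0$, and I would want to apply the inductive hypothesis to conclude $u' \preceq_h w_0$, which combined with $w \preceq_h u \prec_h u'$ gives $w \preceq_h u' \preceq_h w_0$, hence $u \preceq_h w_0$. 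Wait — I need the inductive hypothesis in the form "$w \preceq_h u'$ implies $u' \preceq_h w_0$", and I have $w \preceq_h u \prec_h u'$, so indeed $w \preceq_h u'$; the induction then closes. The base case is $u = w_0$, which is trivial. The one genuine subtlety is making sure the ascent-swap construction is available whenever $u \neq w_0$: a permutation with no ascent is necessarily $w_0$, so this is fine. I would also remark that this argument shows more — that every $u \in [w,w_0]_h$ actually has $[u,w_0]_h$ nonempty with $w_0$ as maximum — which is exactly the kind of statement needed for Theorem~\ref{thm:interval}. The writeup would be short: state the induction on $\ell(w_0)-\ell(u)$, handle $u=w_0$, otherwise pick an ascent $a$, note $(a,a+1)$ is an $h$-swap raising length by one, invoke the inductive hypothesis on $u(a,a+1)$, and conclude by transitivity of $\preceq_h$.
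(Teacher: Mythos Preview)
Your inductive strategy is sound in outline, but there is a genuine gap at the step where you assert that the adjacent swap $(a,a+1)$ is ``automatically an $h$-swap since $a+1 \leq a+1 \leq h(a)$.'' The Hessenberg condition only guarantees $h(a) \geq a$, not $h(a) \geq a+1$; when $h(a)=a$ the transposition $(a,a+1)$ is \emph{not} an $h$-transposition and you cannot conclude $u \prec_h u(a,a+1)$. Notice that your final argument never actually uses the hypothesis that $w$ is a generator---yet the lemma is false without it. For instance, with $h=(2,2,3)$ the permutation $u=213$ has its unique ascent at position $a=2$, where $h(2)=2$, and indeed $213 \not\preceq_h 321=w_0$.

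The missing ingredient is precisely where the generator condition enters. If $h(a)=a$, one checks from the generator definition (together with $h$ nondecreasing) that $w[1,a]=[n-a+1,n]$; since no $h$-transposition moves entries across position $a$, the relation $w \preceq_h u$ forces $u[1,a]=[n-a+1,n]$ as well, whence $u(a)>u(a+1)$ and there is no ascent at such $a$. The paper's proof makes this block structure explicit: it lists the positions $i_1<\cdots<i_k$ where $h(i_j)=i_j$, observes that $u$ and $w_0$ agree setwise on each block $[i_{j-1}+1,i_j]$, and then bubble-sorts each block into decreasing order using adjacent swaps (which are valid $h$-swaps strictly inside each block). Once you add the observation that ascents of $u$ occur only at positions $a$ with $h(a)>a$, your induction goes through and is essentially the paper's argument.
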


\begin{proof}
Let $h$ be a Hessenberg function on $[n]$.
If $h(i)>i$ for all $i<n$, then we can successively apply the adjacent transpositions to $u$ on the right to make a chain $u\prec_h u_1 \prec_h u_2\prec_h\cdots\prec_h u_l\prec_h w_0$; first move $1$ to the $n$th position, then move $2$ to the $(n-1)$st position, and so on until we obtain $w_0$. 

If $h(i)=i$ for some $i<n$, let 
$0=i_0<i_1<i_2<\cdots <i_{k}=n$ be the indices satisfying $h(i_j)=i_j$ for $j \in [k]$, and 
set $I_j:=[i_{j-1}+1, i_j]$.
Then $[n]=\bigsqcup_{j}I_j$. We first observe that if $x=w(i)$ for $i\in I_j$ then $x+1=w(i')$ for $i'\in I_{j'}$ with $j'\leq j$ since $w\in\gen$. This implies that $w(I_j)= \{w(i_{j-1}), \dots,  w(i_j)\}=[n-i_j+1,n-i_{j-1}]$ for all $j\in [k]$. We then consider the condition $w\preceq_h u$. By the definition of the $h$-Bruhat order, to obtain $u$ from $w$ we can only apply the transpositions inside each $I_j$; that is $(i, i')$'s such that $i, i'\in I_j$ for some $j$. This shows that $u(I_j)=w(I_j)$ for all $j\in [k]$. For each $j$, we can consider the restriction $u^j:=u|_{I_j}$ of $u$ on $I_j$ as a permutation on $[n-i_j+1,n-i_{j-1}]$ and we successively apply adjacent transpositions inside each $I_j$ to obtain the permutation $w_0^j:=(n-i_{j-1})\, (n-i_{j-1}-1)\, \cdots\, (n-i_j+1)$ as we did in the proof of the first case that $h(i)>i$. This defines a chain $u^j\prec_h u^j_1\prec_h\cdots \prec_h u^j_{l(j)}=w_0^j$, and we let $t^j_i$ be the transposition satisfying $u^j_{i-1}t^j_i= u^j_i$ for $i\in [l(j)]$.
We can now construct a chain
\(u\prec_h u_1\prec_h \cdots  \prec_h u_{l}=w_0  \) where $l=\prod_j l(j)$, by applying the transpositions $t^j_i$ in the following order: $t^1_1, \dots, t^1_{l(1)}, t^2_1, \dots, t^2_{l(2)}, \dots, t^k_1, \dots, t^k_{l(k)}$. This completes the proof.
\end{proof}

There is a nice result on the $T$-fixed set of $\Omega_{w,h}$ in relation to the $h$-Bruhat order.
 
\begin{proposition}\cite[Proposition 2.11]{CHL}\label{prop:h-interval}
Let $h$ be a Hessenberg function and $w$ be a permutation. If $v\in \Ow{w,h}^T$, then $w\preceq_h v$. In particular, for $w\in\gen$, if $w\preceq v$, then $w\preceq_h v$.
\end{proposition}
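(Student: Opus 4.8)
The plan is to peel $v$ down to $w$ one Hessenberg-admissible transposition at a time. Since $\Owo{w,h}\subseteq\Owo{w}$ and $\Ow{w}=\overline{\Owo{w}}$, we get $\Ow{w,h}=\overline{\Owo{w,h}}\subseteq\Ow{w}$, so $\Ow{w,h}^T\subseteq\Ow{w}^T=[w,w_0]$; moreover $w\in\Ow{w,h}^T$ by Proposition~\ref{prop:h-fixed points}, hence $w\preceq v$ for every $v\in\Ow{w,h}^T$. When $w$ is a generator, Proposition~\ref{prop:h-fixed points}(1) identifies $\Ow{w,h}^T$ with $[w,w_0]$, so the ``in particular'' assertion is exactly the restriction of the first assertion to generators; thus it suffices to prove the first assertion, and I would do this by induction on $\ell(v)$. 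The base case $\ell(v)=\ell(w)$ forces $v=w$ (both lie in the graded poset $[w,w_0]$ and $w\preceq v$), and $w\preceq_h w$ holds trivially. The inductive step reduces to one claim: if $v\in\Ow{w,h}^T$ with $v\neq w$, then there is a transposition $(a,b)$ with $a<b\le h(a)$, $v(a)>v(b)$, and $v(a,b)\in\Ow{w,h}^T$. Granting this, $v(a,b)\prec_h v$ (the length strictly drops since $v(a)>v(b)$, and $b\le h(a)$), while $v(a,b)\in\Ow{w,h}^T$ gives $w\preceq v(a,b)$, so $\ell(w)\le\ell(v(a,b))<\ell(v)$; the induction hypothesis yields $w\preceq_h v(a,b)$, and transitivity gives $w\preceq_h v$.

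It remains to prove the claim, and this is where the generator condition is the leverage. Suppose first $w$ is a generator, so I must find an inversion $(a,b)$ of $v$ with $b\le h(a)$ and $w\preceq v(a,b)$. Let $k$ be the least index with $w(k)\neq v(k)$. Since $w(i)=v(i)$ for $i<k$, comparing sorted prefixes as in Proposition~\ref{prop:Bruhat order}(2) forces $w(k)<v(k)$ and $v^{-1}(w(k))>k$; in particular $w(k)<n$, so the generator hypothesis gives $w^{-1}(w(k)+1)\le h(k)$. The plan is to produce the required transposition by tracking the positions in $v$ of the consecutive values $w(k)$ and $w(k)+1$: the relations $w\preceq v$ and $w(i)=v(i)$ for $i<k$ pin those positions down, the bound $w^{-1}(w(k)+1)\le h(k)$ ensures the larger index of the transposition lands inside the Hessenberg window, and the criterion of Proposition~\ref{prop:Bruhat order}(2) is applied once more to check that the lowered permutation still dominates $w$.

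For general $w$, write $v=w\widetilde w^{-1}u$ with $u\in[\widetilde w,w_0]$ using Proposition~\ref{prop:h-fixed points}(2) (and note $v\neq w$ forces $u\neq\widetilde w$). Because right multiplication by a transposition commutes past left multiplication, $v(a,b)=w\widetilde w^{-1}\,u(a,b)$, so $v(a,b)\in\Ow{w,h}^T$ exactly when $\widetilde w\preceq u(a,b)$. The generator case applied to the corresponding generator $\widetilde w$ and to $u$ then supplies an $h$-admissible transposition $(a,b)$ with $u(a)>u(b)$ and $\widetilde w\preceq u(a,b)$, and the one remaining issue is to arrange that $w\widetilde w^{-1}$ does not reverse the pair $\{u(a),u(b)\}$, so that $(a,b)$ is genuinely an inversion of $v$; here I would exploit the defining property~\eqref{eq:generator} of $\widetilde w$, namely that $w$ and $\widetilde w$ induce the same order on every pair of positions spanning a Hessenberg window.

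The hard part will be the construction in the generator case: showing that the generator hypothesis on $w$ makes the Bruhat interval $[w,w_0]$ reachable from $w$ using only covers coming from transpositions $(a,b)$ with $b\le h(a)$. (This genuinely fails for non-generators, which is exactly why $\Ow{w,h}^T$ need not be an $h$-Bruhat interval, as in the example $h=(3,3,4,4)$, $w=1324$.) An alternative, more geometric route would observe that $\Ow{w,h}=\overline{\Owo{w,h}}$ is an irreducible $T$-invariant variety whose unique dense Bia{\l}ynicki--Birula cell is $\Owo{w,h}$, so that $w$ is its source, and then read the desired chain off the closure order among the Bia{\l}ynicki--Birula cells; but matching the $T$-weights at $w$ with the ascending edges of $\Gamma_h$ leads back to the same combinatorial bookkeeping, so the combinatorial argument seems the more direct one to carry out.
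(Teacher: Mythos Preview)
The paper does not prove this proposition at all; it is quoted verbatim from \cite[Proposition~2.11]{CHL}, so there is no in-paper argument to compare your proposal against.  That said, a few remarks on the soundness of your plan.

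The inductive scheme (peel off one $h$-admissible inversion at a time) is the right shape, and the observation that $\Ow{w,h}^T\subseteq\Ow{w}^T=[w,w_0]$ gives the needed inequality $\ell(w)\le\ell(v)$ to make the induction well-founded.  But both the generator step and the reduction to generators have gaps beyond what you flag.

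In the generator case, the bound you extract from the generator hypothesis is $w^{-1}(w(k)+1)\le h(k)$, a constraint on a \emph{position in $w$}.  The transposition you must exhibit lives in $v$, and you never explain why the positions $v^{-1}(w(k))$ and $v^{-1}(w(k)+1)$ lie within an $h$-window of each other, nor why swapping them keeps $v$ above $w$.  The sentence ``the bound \dots\ ensures the larger index of the transposition lands inside the Hessenberg window'' is exactly the point that needs an argument and does not follow from what you have written; the positions of $w(k)$ and $w(k)+1$ in $v$ can be far from their positions in $w$.

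In the general case the issue you flag is more serious than you suggest.  You want $(a,b)$ with $u(a)>u(b)$ (from the generator step) to also satisfy $v(a)>v(b)$, i.e.\ $w\widetilde w^{-1}$ should preserve the order of $u(a),u(b)$.  Writing $p=\widetilde w^{-1}(u(a))$ and $q=\widetilde w^{-1}(u(b))$, this amounts to $\widetilde w(p)>\widetilde w(q)\Rightarrow w(p)>w(q)$.  Property~\eqref{eq:generator} gives that implication only when $\{p,q\}$ lies in an $h$-window, but $p,q$ are determined by $u$ and there is no reason they should satisfy $\max\{p,q\}\le h(\min\{p,q\})$.  So \eqref{eq:generator} does not obviously rescue the argument; you would either need to choose $(a,b)$ more carefully in the generator step (so that the corresponding $p,q$ are $h$-close), or abandon the induction on $\ell(v)$ in favor of transporting the full $h$-chain $\widetilde w\prec_h\cdots\prec_h u$ and arguing separately that left multiplication by $w\widetilde w^{-1}$ preserves monotonicity of length along it---which is again not automatic.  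The geometric route you mention at the end (via $T$-invariant curves in the irreducible $T$-variety $\Ow{w,h}$) is closer to how \cite{CHL} actually proceeds and sidesteps these bookkeeping difficulties.
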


This proposition implies that for $w\in\gen$, the graph $\Gamma_{w,h}$ is connected, and equivalently, the intersection $\Ow{w}\cap\Hess(S,h)$ is connected.

Now, we can characterize the $T$-fixed points of the Hessenberg Schubert variety in terms of the $h$-Bruhat order. 
Proposition~\ref{prop:h-interval} together with Lemma~\ref{lem:h-maximum} prove the following theorem.

\begin{theorem}\label{thm:interval}
Let $w$ be a permutation in $\gen$ for a Hessenberg function $h$. Then, as sets
 $$[w, w_0]_h=[w, w_0]\,.$$ 
\end{theorem}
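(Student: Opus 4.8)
The plan is to establish the two inclusions $[w,w_0]_h\subseteq [w,w_0]$ and $[w,w_0]\subseteq [w,w_0]_h$ separately, using the two results just proved. The first inclusion is essentially free: if $v\in[w,w_0]_h$, then $w\preceq_h v$ and $v\preceq_h w_0$, and since each covering relation in the $h$-Bruhat order is in particular a covering-type relation in the ordinary Bruhat order (as remarked right after the definition, $u\prec_h v$ implies $u\prec v$), transitivity gives $w\preceq v\preceq w_0$, i.e.\ $v\in[w,w_0]$.

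For the reverse inclusion, let $v\in[w,w_0]$, so $w\preceq v\preceq w_0$. Since $w$ is a generator, the ``in particular'' clause of Proposition~\ref{prop:h-interval} applies to the relation $w\preceq v$ and yields $w\preceq_h v$. It then remains to promote this to $v\preceq_h w_0$, and here Lemma~\ref{lem:h-maximum} does exactly the job: taking $u=v$, the hypotheses $w$ a generator and $w\preceq_h v$ give $v\preceq_h w_0$. Combining, $w\preceq_h v\preceq_h w_0$, so $v\in[w,w_0]_h$. This proves $[w,w_0]=[w,w_0]_h$ as sets.

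I do not expect any serious obstacle here, since the theorem is explicitly flagged in the excerpt as an immediate consequence of Proposition~\ref{prop:h-interval} and Lemma~\ref{lem:h-maximum}; the only point requiring a moment's care is making sure we invoke Proposition~\ref{prop:h-interval} in its generator form (so that $w\preceq v$, not merely $v\in\Ow{w,h}^T$, already forces $w\preceq_h v$), and that Lemma~\ref{lem:h-maximum} is being applied with its hypothesis ``$w$ a generator and $w\preceq_h u$'' genuinely satisfied — which it is, because we have just derived $w\preceq_h v$. One could alternatively phrase the argument so as to avoid even mentioning $h$-intervals explicitly by chaining: $v\in[w,w_0]\Rightarrow w\preceq v\Rightarrow w\preceq_h v$ (Prop.~\ref{prop:h-interval}) $\Rightarrow v\preceq_h w_0$ (Lemma~\ref{lem:h-maximum}) $\Rightarrow w\preceq_h v\preceq_h w_0\Rightarrow v\in[w,w_0]_h$, and conversely $v\in[w,w_0]_h\Rightarrow w\preceq_h v\preceq_h w_0\Rightarrow w\preceq v\preceq w_0\Rightarrow v\in[w,w_0]$. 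Either presentation is two or three lines of prose, so the write-up should be short.
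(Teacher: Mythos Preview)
Your proposal is correct and is exactly the argument the paper intends: the theorem is stated there as an immediate consequence of Proposition~\ref{prop:h-interval} (in its generator form) and Lemma~\ref{lem:h-maximum}, with the easy inclusion coming from the observation that $\prec_h$ refines~$\prec$. There is nothing to add or change.
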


The graph $\Gamma_h$ in Figure~\ref{fig:2134} is the Hasse diagram of the $h$-Bruhat order when $h=(3, 3, 4, 4)$. One can check that $w=2134\in \gen$ and $[w, w_0]_h=[w, w_0]$, while $w'=4213 \not \in \gen$ and $[w', w_0]_h=\{4213, 4231, 4321 \} \neq [w', w_0]=[w', w_0]_h\cup \{ 4312  \}$.

\begin{figure}
	\begin{center}
	\begin{tikzpicture}[scale=.6]
		\coordinate (0) at (0,0);
		\foreach \i/\j in {-3/1,0/2,3/3}{
		\coordinate (\j) at (\i,2);}
		\foreach \i/\j in {-6/4,-3/5,0/6,3/7,6/8}{
		\coordinate (\j) at (\i,4);}
		\foreach \i/\j in {-7.5/9,-4.5/10,-1.5/11,1.5/12,4.5/13,7.5/14}{
		\coordinate (\j) at (\i,6);}
		\foreach \i/\j in {-6/15,-3/16,0/17,3/18,6/19}{
		\coordinate (\j) at (\i,8);}
		\foreach \i/\j in {-3/20,0/21,3/22}{
		\coordinate (\j) at (\i,10);}
		\coordinate (23) at (0,12);
		\draw[color=gray] (0) -- (1);
		\draw[color=gray] (0) -- (2);
		\draw[color=gray] (0) -- (3);
		\draw[color=gray] (0) -- (14);
		\draw[color=gray] (1) -- (4);
		\draw[color=gray] (1) -- (6);
		\draw[color=gray, bend left=10] (1) edge (16);
		\draw[color=gray] (2) -- (5);
		\draw[color=gray] (2) -- (7);
		\draw[color=gray] (2) -- (8);
		\draw[color=gray] (4) -- (9);
		\draw[color=gray] (4) -- (10);
		\draw[color=gray] (4) -- (11);
		\draw[color=gray] (5) -- (9);
		\draw[color=gray] (5) -- (12);
		\draw[color=gray, bend right=10] (5) edge (20);
		\draw[color=gray] (9) -- (15);
		\draw[color=gray] (9) -- (17);
		\draw[ultra thick, color=black] (3) -- (6);
		\draw[ultra thick, color=black] (3) -- (7);
		\draw[ultra thick, color=black] (3) -- (8);
		\draw[ultra thick, color=black] (6) -- (10);
		\draw[ultra thick, color=black] (6) -- (11);
		\draw[ultra thick, color=black] (7) -- (12);
		\draw[ultra thick, color=black] (7) -- (14);
		\draw[ultra thick, color=black] (8) -- (13);
		\draw[ultra thick, color=black] (8) -- (14);
		\draw[ultra thick, color=black] (10) -- (15);
		\draw[ultra thick, color=black] (10) -- (16);
		\draw[ultra thick, color=black] (11) -- (16);
		\draw[ultra thick, color=black] (11) -- (18);
		\draw[ultra thick, color=black] (12) -- (15);
		\draw[ultra thick, color=black] (12) -- (17);
		\draw[ultra thick, color=black] (13) -- (18);
		\draw[ultra thick, color=black] (13) -- (19);
		\draw[ultra thick, color=black, bend right=10] (13) edge (23);
		\draw[ultra thick, color=black] (14) -- (19);
		\draw[ultra thick, color=black] (15) -- (20);
		\draw[ultra thick, color=black] (16) -- (21);
		\draw[ultra thick, color=black] (17) -- (20);
		\draw[ultra thick, color=black] (17) -- (22);
		\draw[ultra thick, color=black] (18) -- (21);
		\draw[ultra thick, color=black] (18) -- (22);
		\draw[ultra thick, color=black] (19) -- (21);
		\draw[ultra thick, color=black] (19) -- (22);
		\draw[ultra thick, color=black] (20) -- (23);
		\draw[ultra thick, color=black] (21) -- (23);
		\draw[ultra thick, color=black] (22) -- (23);
		\foreach \i/\j in {0/1234, 1/1243, 2/1324, 3/2134, 4/1423, 5/1342, 6/2143, 7/3124, 8/2314}{
		\node[below] at (\i) {\scriptsize \j};} 
		\foreach \i/\j in {9/1432, 10/4123, 11/2413, 12/3142, 13/2341, 14/3214}{
		\node[below] at (\i) {\scriptsize \j};} 
		\foreach \i/\j in {15/4132, 16/4213, 17/3412, 18/2431, 19/3241, 20/4312, 21/4231, 22/3421, 23/4321}{
		\node[above] at (\i) {\scriptsize \j};} 
		\foreach \i in {0,1,...,23}{
		\filldraw[color=purple] (\i) circle (4pt);}
		\foreach \i in {0,1,2,4,5,9}{
		\filldraw[color=gray] (\i) circle (4pt);}
		\foreach \i in {13,18,19,21,22,23}{
		\filldraw[color=black] (\i) circle (4pt);}
	\end{tikzpicture}
	\end{center}
	\caption{The induced subgraph $\Gamma_{w,h}$ of $\Gamma_{h}$ for a Hessenberg function $h=(3,3,4,4)$ and $w=2134\in\gen$.}
	\label{fig:2134}
\end{figure}

Finally, we show that the chain property of the Bruhat order extends to the $h$-Bruhat order.

\begin{theorem}[Chain Property of the $h$-Bruhat order]\label{thm:h-chain}
    Let $h$ be a Hessenberg function on $[n]$. If $u\prec_h v$ in $\mathfrak S_n$, then there exist $v_i\in \mathfrak S_n$ satisfying $\ell(v_i)=\ell(u)+i$ for $0\leq i\leq k$, and $u=v_0\prec_h v_1\prec_h \cdots \prec_h v_k=v$. 
\end{theorem}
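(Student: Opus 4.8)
The plan is to reduce to the case of a single covering relation in the $h$-Bruhat order and then, within that case, produce a saturated $h$-Bruhat chain from $u$ to $v$ by induction on $\ell(v)-\ell(u)$. Since $\prec_h$ is defined as the transitive closure of the elementary relations $u \prec_h u(i,j)$ with $j \le h(i)$ and $\ell(u(i,j)) > \ell(u)$, it suffices to treat the case $v = u(i,j)$ for a single transposition $(i,j)$ with $j \le h(i)$; the general statement then follows by concatenating the chains obtained for each elementary step in a witnessing sequence $u = x_0 \prec_h x_1 \prec_h \cdots \prec_h x_m = v$. So assume $v = u(i,j)$, $j \le h(i)$, $u(i) < u(j)$ (equivalently $\ell(v) > \ell(u)$), and set $k = \ell(v) - \ell(u)$. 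If $k = 1$ there is nothing to prove, so suppose $k \ge 2$.

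The key step is to find an index $i < r < j$ with $u(i) < u(r) < u(j)$: such an $r$ exists precisely when $k \ge 2$, because $k = \ell(u(i,j)) - \ell(u)$ equals $1$ plus twice the number of indices $r$ strictly between $i$ and $j$ with $u(i) < u(r) < u(j)$ (this is the standard formula for how the length changes under a transposition). Given such an $r$, note that $r \le h(i)$ since $r < j \le h(i)$, and also $j \le h(i) \le h(r)$ by monotonicity of $h$ together with $r > i$; hence both transpositions $(i,r)$ and $(r,j)$ are $h$-admissible. Moreover $u(i) < u(r)$ gives $u \prec_h u(i,r)$, and applying $(r,j)$ to $u(i,r)$ sends position $r$ (value $u(i)$) and position $j$ (value $u(j)$), with $u(i) < u(j)$, so $u(i,r) \prec_h u(i,r)(r,j) = u(i,j) = v$. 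Thus $u \prec_h u(i,r) \prec_h v$ in the $h$-Bruhat order, with $\ell(u) < \ell(u(i,r)) < \ell(v)$, and so $\ell(v) - \ell(u(i,r)) < k$. Applying the induction hypothesis to the pair $u(i,r) \prec_h v$ (and noting $\ell(u(i,r)) - \ell(u) < k$ lets us also refine $u \prec_h u(i,r)$ if it is not already a cover, again by induction), we splice the two saturated $h$-chains together to get a saturated $h$-chain from $u$ to $v$ of the required length $k$.

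The main obstacle — and the only place any real care is needed — is verifying the $h$-admissibility of both intermediate transpositions $(i,r)$ and $(r,j)$ simultaneously, i.e.\ that choosing an index $r$ strictly between $i$ and $j$ (rather than outside the interval) is exactly what makes the argument compatible with the Hessenberg constraint. This works out cleanly because $i < r < j \le h(i)$ forces $r \le h(i)$, and $r > i$ forces $h(r) \ge h(i) \ge j$; the subtlety is only that one must insist on $r$ between $i$ and $j$, which is automatically the case since the indices contributing to the length jump $k$ are exactly those $r \in (i,j)$ with $u(i) < u(r) < u(j)$. Everything else is the classical proof of the chain property for the ordinary Bruhat order (see Proposition~\ref{prop:chain}), carried through while tracking that the transpositions used stay within the Hessenberg window; I would present it in that form, emphasizing the length-change formula and the observation $r < j \le h(i) \le h(r)$.
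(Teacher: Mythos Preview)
Your argument contains a genuine error at the key step: the claimed identity $u(i,r)(r,j)=u(i,j)$ is false. The product $(i,r)(r,j)$ of two transpositions sharing the letter $r$ is a $3$-cycle, not the transposition $(i,j)$; concretely, $u(i,r)(r,j)$ carries the values $u(r),u(j),u(i)$ at positions $i,r,j$, whereas $u(i,j)$ carries $u(j),u(r),u(i)$ there. Thus $u(i,r)$ and $v=u(i,j)$ differ in three positions and are not related by any single transposition, so you have not established $u(i,r)\prec_h v$ in a form to which your induction hypothesis (on a single elementary $h$-step) applies.

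The strategy is easily repaired. Since $(i,j)=(i,r)(r,j)(i,r)$, one obtains a three-step chain
\[
u \ \prec_h\ u(i,r) \ \prec_h\ u(i,r)(r,j) \ \prec_h\ u(i,r)(r,j)(i,r)=v,
\]
each step using a transposition with both indices in $[i,j]$ (hence $h$-admissible, since $r<j\le h(i)$ and $j\le h(i)\le h(r)$) and each strictly increasing length (the relevant value comparisons are $u(i)<u(r)$, then $u(i)<u(j)$, then $u(r)<u(j)$). The three length gaps sum to $k$, so each is at most $k-2<k$ and induction applies. The paper's own proof short-circuits this induction altogether: it observes that $u$ and $v=u(a,b)$ agree outside the window of positions $a,\dots,b$, so the classical chain property (Proposition~\ref{prop:chain}) applied locally to the subword on those positions already produces a saturated Bruhat chain whose transpositions $(a_i,b_i)$ all satisfy $a\le a_i<b_i\le b\le h(a)\le h(a_i)$ and are therefore $h$-admissible.
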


\begin{proof} We show that `if  $u\prec_h u(a,b)$ with $b\leq h(a)$ then there is a chain $u=v_0\prec_h v_1 \prec_h\cdots\prec_h v_k\coloneqq u(a, b)$ with $\ell(v_i)=\ell(u)+i$ for $i=1, \dots, k$.' This will prove the theorem by the definition of the $h$-Bruhat order. 

Suppose that $u\prec_h u(a,b)$ with $b\leq h(a)$, then by Proposition~\ref{prop:chain} there is a chain 
  $u=v_0\prec v_1\prec \cdots \prec v_k=u(a,b)$ with $\ell(v_i)=\ell(u)+i$ for $0\leq i\leq k$.
The main observation is that, if we only look at the subsequence $u(a) u(a+1) \cdots u(b)$ of $u$ and $u(b) u(a+1) \cdots u(a)$ of $u(a,b)$, then they are permutations of $\{u(a), u(a+1) \cdots, u(b)\}$. Thus, we can locally apply the chain property to obtain a chain from $u(a) u(a+1) \cdots u(b)$ to $u(b) u(a+1) \cdots u(a)$, which means that we can assume that $v_{i+1}=v_i(a_i, b_i)$ for $a\leq a_i< b_i\leq b$. This implies that  $b_i\leq h(a_i)$ and $v_i\prec_h v_{i+1}$, which completes the proof.
\end{proof}

\subsection{An injection for the GKM graphs \texorpdfstring{$\Gamma_{w,h}$}{Gwh}}\label{sec:injection}

Recall that 
for a Hessenberg function $h$, if $w\in\gen$, then $\Gamma_{w,h}$ is the GKM graph of $\Ow{w}\cap\Hess(S,h)$ and it is the induced subgraph of $\Gamma_h$ whose vertex set is $[w, w_0]=[w, w_0]_h$. Note that $\{u,v\}$ is an edge of $\Gamma_{w,h}$ if and only if $w\preceq u,v$ and $u=v(i,j)$ for some $(i,j)$ with $j \leq h(i)$. For each vertex $u$ in $\Gamma_{w,h}$, we denote by
$$
E_{w,h}(u) \coloneqq \{ (i,j) \mid u(i,j) \succeq w \text{~for~} 1\leq i<j\leq h(i) \}
$$
the set of transpositions $(i,j)$ satisfying that $\{u,u(i,j)\}$ is an edge of $\Gamma_{w,h}$. Thus, it is evident that 
$$
E_{w,h}(w)=\{ (i,j) \mid w(i)<w(j) \text{~for~} 1\leq i<j\leq h(i) \}\,.
$$

For vertices $u$ and $v$ in $\Gamma_{w,h}$ satisfying that $v=u(a,b)$ for some $(a,b) \in E_{w,h}(u)$ and $u\prec_h v$, we define a map $\phi_{uv} \colon E_{w,h}(u) \to E_{w,h}(v)$ by $(i,j) \mapsto (\overline{i},\overline{j})$, where
$$
(\overline{i},\overline{j}) \coloneqq \begin{cases}
(b,j) & \mbox{if $i=a$, $j>b$, and $(b,j)\not \in E_{w,h}(u) $},\\
(i,a) & \mbox{if $i<a$, $j=b$, and $(i,a)\not \in E_{w,h}(u) $},\\
(i,j) & \mbox{otherwise}.
\end{cases}
$$

The map $\phi_{uv}$ is defined for two vertices $u$ and $v$ that are connected by an edge in $\Gamma_{w,h}$. 
It is defined almost as the identity map. 
Since 
\[
\{(i,j)\in E_{w,h}(u)\mid \{i,j\}\cap\{a,b\}=\emptyset\}
=
\{(i,j)\in E_{w,h}(v)\mid \{i,j\}\cap\{a,b\}=\emptyset\},
\]
it is natural to define the map $\phi_{uv}$ as the identity on this subset. 
When $\{i,j\}\cap\{a,b\}\neq\emptyset$, the definition of $\phi_{uv}$ takes a different form; 
in particular, the first and the second cases of the definition correspond to specific subcases of this situation, 
where $\phi_{uv}$ differs from the identity map.
 We exhibit two examples of the map $\phi_{uv}$.

 \begin{example} 
Consider a permutation $w=2134\in\gen$ for the Hessenberg function $h=(3,3,4,4)$.
The graph $\Gamma_{w,h}$ is depicted in Figure~\ref{fig:2134}.

\begin{enumerate}
    \item If $u=2314$ and $v=u(3,4)=2341$, then we have $E_{w,h}(u)=\{(1,2),(2,3),(3,4)\}$ and $E_{w,h}(v)=\{(1,2), (1,3), (2,3),(3,4)\}$. The map $\phi_{uv}\colon E_{w,h}(u) \to E_{w,h}(v)$ is an injection defined by $\phi_{uv}(i,j)=(i,j)$ for all $(i,j)\in E_{w,h}(u)$.
    \item If $u=3124$ and $v=u(2,3)=3214$, then we have $E_{w,h}(u)=\{(1,3),(2,3),(3,4)\}$ and $E_{w,h}(v)=\{(1,2),(2,3),(3,4)\}$. The map $\phi_{uv}$ sends $(1,3)$ to $(1,2)$ and fixes the others.
\end{enumerate}

\end{example}

\begin{lemma}\label{lem:injection}
Let $w$ be a permutation in $\gen$ for a Hessenberg function $h$. If $u$ and $v$ are vertices in $\Gamma_{w,h}$ satisfying that $v=u(a,b)$ for some $(a,b) \in E_{w,h}(u)$ and $u\prec_h v$, then the map $\phi_{uv} \colon E_{w,h}(u) \to E_{w,h}(v)$ is well-defined and injective.
\end{lemma}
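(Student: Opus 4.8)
The plan is to verify the two assertions separately: first that $\phi_{uv}$ actually lands in $E_{w,h}(v)$ (well-definedness), and then that it is injective. Throughout I would write $v=u(a,b)$ with $(a,b)\in E_{w,h}(u)$, so $a<b\le h(a)$ and $u(a)<u(b)$, and I would keep in mind that $v$ agrees with $u$ outside positions $a$ and $b$, with $v(a)=u(b)$ and $v(b)=u(a)$. For well-definedness I would go case by case through the three branches of the definition. In the ``otherwise'' branch, where $\phi_{uv}(i,j)=(i,j)$, the constraint $1\le i<j\le h(i)$ is inherited, so the only thing to check is $v(i,j)\succeq w$, i.e. $(i,j)\in E_{w,h}(v)$; here I would argue that since $u,v\succeq w$ and the pair $(i,j)$ is ``far'' from $(a,b)$ in the relevant sense, $v(i,j)$ still dominates $w$ — concretely, either $(i,j)$ is disjoint from $\{a,b\}$ so $v(i,j)$ and $u(i,j)$ agree up to the swap $v=u(a,b)$, or $(i,j)$ shares an index with $\{a,b\}$ but the exceptional hypotheses ($(b,j)\notin E_{w,h}(u)$ etc.) fail, so one can transfer the Bruhat comparison. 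For the first branch ($i=a$, $j>b$, $(b,j)\notin E_{w,h}(u)$): here $\phi_{uv}(a,j)=(b,j)$, and I need $b<j\le h(b)$ and $v(b,j)\succeq w$. The inequality $j\le h(b)$ follows from $j\le h(a)$ (since $(a,j)\in E_{w,h}(u)$) and $a<b$ with $h$ nondecreasing. For $v(b,j)\succeq w$: note $v(b,j)$ is obtained from $v$ by swapping positions $b,j$, and $v(b)=u(a)$, $v(j)=u(j)$, so $v(b,j)$ has $u(j)$ in position $b$ and $u(a)$ in position $j$; one compares this with $u(a,j)\succeq w$ (which holds since $(a,j)\in E_{w,h}(u)$) using Lemma~\ref{lemma:Bruhat order} on the positions where they differ. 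The second branch is the mirror image and handled symmetrically.

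For injectivity I would suppose $\phi_{uv}(i,j)=\phi_{uv}(i',j')=(\overline{i},\overline{j})$ with $(i,j)\ne(i',j')$ and derive a contradiction. The map moves a pair only in two situations: it sends $(a,j)\mapsto(b,j)$ (first branch) or $(i,b)\mapsto(i,a)$ (second branch), and fixes everything else. A collision can therefore only happen between a ``moved'' pair and a ``fixed'' pair with the same image, or between two moved pairs — but two distinct moved pairs of the first branch have distinct second coordinates $j\ne j'$, hence distinct images $(b,j)\ne(b,j')$, and similarly for the second branch, and a first-branch image $(b,j)$ with $j>b$ cannot equal a second-branch image $(i,a)$ with $i<a$ unless $b=i<a$ and $j=a$, contradicting $j>b\ge a$... so the only real danger is a moved pair colliding with a fixed pair. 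Suppose $(a,j)\mapsto(b,j)$ but also some fixed pair $(b,j)$ is already in $E_{w,h}(u)$ — that is exactly excluded by the hypothesis $(b,j)\notin E_{w,h}(u)$ in the first branch. Symmetrically the second branch's side condition $(i,a)\notin E_{w,h}(u)$ rules out the collision there. So no collision is possible.

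I expect the main obstacle to be the Bruhat-comparison verifications inside the well-definedness step — showing $v(b,j)\succeq w$ and $v(i,a)\succeq w$ in the two exceptional branches, and showing the ``otherwise'' pairs stay in $E_{w,h}(v)$. These require carefully tracking, via Lemma~\ref{lemma:Bruhat order}, the values $u(a)<u(b)$ against $w$ and using that both $u(a,j)\succeq w$ (resp.\ $u(i,b)\succeq w$) and $u\succeq w$; the side conditions $(b,j)\notin E_{w,h}(u)$ (equivalently $u(b)>u(j)$) and $(i,a)\notin E_{w,h}(u)$ (equivalently $u(i)>u(a)$) are precisely what make the required monotone-domination inequalities go through. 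The injectivity argument, by contrast, is essentially bookkeeping once one observes that the side conditions were designed exactly to prevent a moved pair from landing on an already-present pair.
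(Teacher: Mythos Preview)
Your plan is sound and tracks the paper's argument: both establish well-definedness by a case analysis on how $(i,j)$ meets $\{a,b\}$, and both observe that the side conditions in the definition of $\phi_{uv}$ make injectivity automatic. Two points need tightening.

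First, the ``otherwise'' branch hides more work than you suggest. It is not the case that every fixed $(i,j)$ is either disjoint from $\{a,b\}$ or sits under an exceptional clause whose side hypothesis happens to fail: the \emph{positional} conditions of the exceptional clauses may themselves fail (for instance $i=a$ with $j<b$, or $j=a$, or $i=b$, or $a<i$ with $j=b$), and each of these still requires its own verification that $v(i,j)\succeq w$. The paper runs through them one by one. The trickiest are $(i,b)$ with $i<a$ and $(i,a)\in E_{w,h}(u)$, and dually $(a,j)$ with $j>b$ and $(b,j)\in E_{w,h}(u)$: here neither $u(i,a)\succeq w$ nor $u(i,b)\succeq w$ alone suffices, since $v(i,b)$ need not lie above either of them; one must combine both via Proposition~\ref{prop:Bruhat order}(2), using that $(v(i,b))[k]$ coincides with $(u(i,a))[k]$ for $k<a$ and with $(u(i,b))[k]$ for $k\ge a$.

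Second, your parentheticals ``$(b,j)\notin E_{w,h}(u)$ (equivalently $u(b)>u(j)$)'' and the analogue for $(i,a)$ are only implications, not equivalences: since $b<j\le h(b)$ and $u\succeq w$, if $u(b)<u(j)$ then $u(b,j)\succ u\succeq w$ would force $(b,j)\in E_{w,h}(u)$; but $u(b)>u(j)$ by itself does not preclude $u(b,j)\succeq w$. That said, only the forward implication is needed, and with it your handling of the two exceptional branches is actually slightly cleaner than the paper's---the paper splits further on the sign of $u(j)-u(a)$, whereas once $u(j)<u(b)$ is in hand the single line $v(b,j)=(u(a,j))(a,b)\succ u(a,j)\succeq w$ covers both sub-cases.
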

\begin{proof}
From the definition of $\phi_{uv}$, it is enough to show that the map $\phi_{uv}$ is well-defined. 
Indeed, for every $(i,j)$ with $\{i,j\}\cap\{a,b\}\neq\emptyset$, we have 
\[
\phi_{uv}((i,j)) \in \{(\bar{i},\bar{j})\in E_{w,h}(v)\mid \{\bar{i},\bar{j}\}\cap\{a,b\}\neq\emptyset\}.
\]
Since the images of distinct elements in $E_{w,h}(u)$ remain disjoint in $E_{w,h}(v)$, 
no two different pairs in the domain can be mapped to the same element, 
and thus $\phi_{uv}$ is injective. Specifically, suppose that \((\bar{i}, \bar{j}) = (b, j) = (\bar{k}, \bar{\ell}) = (k, \ell)\).
When \((\bar{i}, \bar{j}) = (b, j)\), either  
\begin{itemize}
    \item \(i = a\), \(j > b\), and \((b, j) \notin E_{w,h}(u)\), or
    \item \((i, j) = (b, j)\).
\end{itemize}
However, the above two cases can never occur simultaneously, and hence we must have \((i, j) = (k, \ell)\).

Note that from the assumption, we have  $a<b\leq h(a)$, $u(a)<u(b)$, and $v(a,b)=u \succeq w$.
Let $(i,j)\in E_{w,h}(u)$. We show that $\phi_{uv}$ is well-defined by showing $\overline{i}<\overline{j}\leq h(\overline{i})$ and $ v(\overline{i},\overline{j})\succeq w$.  If $(i,j)=(a,b)$, then $\phi_{uv}(a,b)=(a,b)$. If $\{i,j\} \cap \{a,b\}=\emptyset$, then $(\overline{i},\overline{j})=(i,j)$ so that we have $\overline{i}<\overline{j}\leq h(\overline{i})$ and
$$
v(\overline{i},\overline{j})=v(i,j)=u(a,b)(i,j)=u(i,j)(a,b)\succ u(i,j) \succeq w\,.
$$
Hence, if $|\{i,j\}\cap\{a,b\}| \neq 1$, then $\phi_{uv}$ is well-defined. For the remaining $(i,j)$, we split them into four cases as follows.

\textbf{Case i)} Let $i=a$ and $j < b$. Since $\phi_{uv}(a,j)=(a,j)$, we only need to show that $v(a,j)\succeq w$. In this case, since $(u(a,j))(j)=u(a)<u(b)=(u(a,j))(b)$, by Lemma~\ref{lemma:Bruhat order}
$$
v(a,j)=u(a,j)(j,b)\succ u(a,j) \succeq w\,.
$$

\textbf{Case ii)} Let $i>a$ and $j=b$. Similarly to the above case, since $\phi_{uv}(i,b)=(i,b)$, let us show $v(i,b)\succeq w$. Since $(u(i,b))(a)=u(a)<u(b)=(u(i,b))(i)$, we have
$$
v(i,b)=u(i,b)(a,i)\succ u(i,b) \succeq w\,.
$$

\textbf{Case iii)} Let $i<a$ and $j\in\{a,b\}$. 
First, consider the subcase $j=a$ so that $(i,a)\in E_{w,h}(u)$ and $\phi_{uv}(i,a)=(i,a)$.
Then since $(u(i,a))(i)=u(a)<u(b)=(u(i,a))(b)$, we have
$$
v(i,a)=u(a,b)(i,a)=u(i,a)(i,b)\succ u(i,a) \succeq w\,,
$$ 
and $\phi_{uv}(i,a)=(i,a)$ is in $E_{w,h}(v)$.

Now consider the subcase $j=b$ so that $(i,b)\in E_{w,h}(u)$. 
If $(i,a)\in E_{w,h}(u)$, then $\phi_{uv}(i,b)=(i,b)$.
Then since $u(i,a),u(i,b)\succeq w$, and 
$$
(v(i,b))[k]=\begin{cases}
(u(i,a))[k] & \mbox{if $k<a$},\\
(u(i,b))[k] & \mbox{otherwise},
\end{cases}
$$
for all $k$, we have $v(i,b)\succeq w$ by Proposition~\ref{prop:Bruhat order} (2) and $\phi_{uv}(i,b)=(i,b)$ is  in $E_{w,h}(v)$.

On the other hand, if $(i,a)\not\in E_{w,h}(u)$, 
then $\phi_{uv}(i,b)=(i,a)$. Here, $i<a<b\leq h(i)$. In this case, we need to show that $v(i,a)\succeq w$. If $u(i)<u(b)$, then $v(i,a)\succ v \succeq w$. Otherwise, we have $u(a)<u(b)<u(i)$, so it follows that 
$$
v(i,a)=u(i,b)(a,b)\succ u(i,b)\succeq w
$$
since $(u(i,b))(a)=u(a)<u(i)=(u(i,b))(b)$.

\textbf{Case iv)} Let $i\in\{a,b\}$ and $j>b$. 
We proceed as in Case (iii). 
First, consider the subcase $i=b$ so that $(b,j)\in E_{w,h}(u)$ and $\phi_{uv}(b,j)=(b,j)$.
Since $(u(b,j))(a)=u(a)<u(b)=(u(b,j))(j)$, we have
$$
v(b,j)=u(a,b)(b,j)=u(b,j)(a,j)\succ u(b,j) \succeq w\,.
$$
Now consider the subcase $i=a$ so that $(a,j)\in E_{w,h}(u)$. 
If $(b,j)\in E_{w,h}(u)$, then $\phi_{uv}(a,j)=(a,j)$. Then
$v(a,j)\succeq w$ as $u(a,j),u(b,j)\succeq w$ and for all $k$
\[
(v(a,j))[k]=\begin{cases}
(u(a,j))[k] & \mbox{if 
$k<b$,
}\\
(u(b,j))[k] & \mbox{otherwise}\,,
\end{cases}
\] 
and $\phi_{uv}(a,j)=(a,j)$ is  in $E_{w,h}(v)$.

On the other hand, if $(b,j)\not\in E_{w,h}(u)$,
then $\phi_{uv}(a,j)=(b,j)$. Here, $a<b<j\leq h(a)\leq h(b)$. In this case, we need to show that $v(b,j)\succeq w$. If $u(a)<u(j)$, then $v(b,j)\succ v \succeq w$. Otherwise, we have $u(j)<u(a)<u(b)$, so it follows that 
$$
v(b,j)=u(a,j)(a,b)\succ u(a,j)\succeq w
$$
since $(u(a,j))(a)=u(j)<u(b)=(u(a,j))(b)$.

Thus, we conclude that $\phi_{uv}$ is well-defined.
\end{proof}

The following theorem is immediate from the previous lemma due to Lemma~\ref{lem:h-maximum} and Theorem~\ref{thm:h-chain}.

\begin{theorem}\label{thm:increasing}
For a Hessenberg function $h$ and a permutation {$w\in\gen$}, if $u$ and $v$ are vertices in $\Gamma_{w,h}$ with $u\preceq_h v$, then $|E_{w,h}(u)|\leq |E_{w,h}(v)|$. Furthermore, $|E_{w,h}(w)|\leq |E_{w,h}(w_0)|.$
\end{theorem}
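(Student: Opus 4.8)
The plan is to deduce Theorem~\ref{thm:increasing} directly from Lemma~\ref{lem:injection} by chaining the injections $\phi_{uv}$ along a saturated chain in the $h$-Bruhat order. First I would observe that the statement is transitive: if $u\preceq_h v$ and $v\preceq_h v'$ and we already know $|E_{w,h}(u)|\le|E_{w,h}(v)|$ and $|E_{w,h}(v)|\le|E_{w,h}(v')|$, then $|E_{w,h}(u)|\le|E_{w,h}(v')|$. Hence it suffices to treat the case where $v$ covers $u$ in the $h$-Bruhat order, i.e.\ $v=u(a,b)$ with $(a,b)\in E_{w,h}(u)$, $u\prec_h v$, and $\ell(v)=\ell(u)+1$. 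Actually, even the covering hypothesis is not needed: Lemma~\ref{lem:injection} applies whenever $v=u(a,b)$ for some $(a,b)\in E_{w,h}(u)$ with $u\prec_h v$, so it suffices to know that any $h$-Bruhat relation $u\preceq_h v$ can be refined into a sequence of single-transposition steps of exactly this form.

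Here is where Theorem~\ref{thm:h-chain} (Chain Property of the $h$-Bruhat order) enters. Given $u\preceq_h v$ with $u,v\in\Gamma_{w,h}$, the chain property produces permutations $u=v_0\prec_h v_1\prec_h\cdots\prec_h v_k=v$ with $\ell(v_i)=\ell(u)+i$; in particular each step $v_i\prec_h v_{i+1}$ is of the form $v_{i+1}=v_i(a_i,b_i)$ with $a_i<b_i\le h(a_i)$ and $\ell(v_{i+1})>\ell(v_i)$, so by Lemma~\ref{lemma:Bruhat order} we have $v_i(a_i)<v_i(b_i)$, i.e.\ $(a_i,b_i)\in E_{w,h}(v_i)$ once we know $v_i\succeq w$. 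That last point is the one subtlety: I need each intermediate $v_i$ to actually be a vertex of $\Gamma_{w,h}$, equivalently $v_i\in[w,w_0]$. But $u\succeq w$ and $v_i\succ u$ forces $v_i\succ w$, and $v_i\preceq v\preceq w_0$, so indeed $v_i\in[w,w_0]=[w,w_0]_h=\Gamma_{w,h}^T$ by Theorem~\ref{thm:interval}. Thus every $\phi_{v_iv_{i+1}}\colon E_{w,h}(v_i)\to E_{w,h}(v_{i+1})$ is a well-defined injection by Lemma~\ref{lem:injection}, giving $|E_{w,h}(v_i)|\le|E_{w,h}(v_{i+1})|$, and composing along the chain yields $|E_{w,h}(u)|\le|E_{w,h}(v)|$.

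For the final assertion $|E_{w,h}(w)|\le|E_{w,h}(w_0)|$: since $w$ is a generator, Proposition~\ref{prop:h-interval} (or simply the definition, as $w\preceq_h w$ trivially) together with Lemma~\ref{lem:h-maximum} shows that $w\preceq_h w_0$ — indeed $w_0\in\Ow{w,h}^T$ so $w\preceq_h w_0$ by Proposition~\ref{prop:h-interval}, or alternatively Lemma~\ref{lem:h-maximum} applied with $u=w$ gives $w\preceq_h w_0$ directly once one notes $w\preceq_h w$. Then the first part of the theorem applied to the pair $w\preceq_h w_0$ gives the claim.

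I do not expect a serious obstacle here, since the excerpt explicitly flags this theorem as ``immediate from the previous lemma due to Theorem~\ref{thm:h-chain} and Lemma~\ref{lem:h-maximum}.'' The only point requiring a moment's care is verifying that the intermediate permutations $v_i$ supplied by the chain property genuinely lie in $[w,w_0]$ so that Lemma~\ref{lem:injection} is applicable at each step; this follows immediately from $w\preceq u\prec v_i\preceq v\preceq w_0$. A secondary bookkeeping point is that Lemma~\ref{lem:injection} is stated for a single transposition $v=u(a,b)$ rather than for a general $h$-Bruhat relation, which is exactly why the chain property is invoked to reduce to that case.
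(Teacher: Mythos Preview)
Your argument is correct and is exactly the approach the paper intends: chain Lemma~\ref{lem:injection} along a saturated $h$-Bruhat chain supplied by Theorem~\ref{thm:h-chain}, then invoke Lemma~\ref{lem:h-maximum} to get $w\preceq_h w_0$. The one phrasing slip is that $(a_i,b_i)\in E_{w,h}(v_i)$ requires $v_i(a_i,b_i)=v_{i+1}\succeq w$ rather than $v_i\succeq w$, but you verify $v_i\in[w,w_0]$ for all $i$ anyway, so the argument goes through.
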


For example, Figure~\ref{fig:2134} shows that strict inequality may hold in the previous theorem.
Here, $|E_{w,h}(u)|=3$ and $|E_{w,h}(v)|=4$ for red vertices $u$ and black vertices $v$, respectively.

\subsection{Associated patterns for irregularity}\label{sec:irregularity}

Recall that for a Hessenberg function $h=(n,n,\dots,n)$, the induced subgraph $\Gamma_{w,h}$ is irregular if and only if $w\in \mathfrak{S}_n$ contains pattern $2143$ or pattern $1324$. We provide seven patterns associated with a Hessenberg function $h$ for irregularity of $\Gamma_{w,h}$ when $w\in\gen$. Using the injection $\phi_{uv}$ that we defined in the previous section, we show that $\Gamma_{w,h}$ is not regular when $w\in\gen$ contains one of the seven patterns.

\begin{definition}\label{def:pattern4}

Let $h$ be a Hessenberg function on $[n]$. For a permutation $w\in \mathfrak{S}_n$, we say that $w$ contains the associated pattern
\begin{enumerate}
\item $\hpat{2143}$ if $w(j)<w(i)<w(\ell)<w(k)$ for some $i<j<k<\ell\leq h(i)$;
\item $\hpat{1324}$ if $w(i)<w(k)<w(j)<w(\ell)$ for some $i<j<k<\ell\leq h(j)$ with $k\leq h(i)$;
\item $\hpat{1243}$ if $w(i)<w(j)<w(\ell)<w(k)$ for some $i<j<k<\ell\leq h(j)$ with $j\leq h(i)<\ell$;
\item $\hpat{2134}$ if $w(j)<w(i)<w(k)<w(\ell)$ for some $i<j<k<\ell\leq h(k)$ with $k\leq h(i)<\ell$; 
\item $\hpat{1423}$ if 
$w(i)<w(k)<w(\ell)<w(j)$ 
for some $i<j<k<\ell\leq h(j)$ with $k\leq h(i)<\ell$;
\item $\hpat{2314}$ if 
$w(k)<w(i)<w(j)<w(\ell)$ 
for some $i<j<k<\ell\leq h(j)$ with $k\leq h(i)<\ell$;
\item $\hpat{2413}$ if 
$w(k)<w(i)<w(\ell)<w(j)$ 
for some $i<j\leq h(i)<k\leq h(j)<\ell\leq h(k)$.\end{enumerate}
\end{definition}

Figure~\ref{fig:patterns} presents \emph{all} possible induced subgraphs with the set $\{j, j, k, \ell\}$ of vertices of the incomparability graph of the  Hessenberg function $h$, for each associated pattern defined in Definition~\ref{def:pattern4}. 

\begin{figure}[ht]
	\begin{center}
	\begin{tikzpicture}[scale=0.55]
		\draw (-5,2.4) -- (25,2.4) -- (25,-4.3) -- (-5,-4.3) -- (-5,2.4);
		\draw (-5,1.2) -- (25,1.2);
		\foreach \i in {0,5,10,15,20}{
		\draw (\i,-4.3) -- (\i,2.4);}
		\node at (-2.5,1.75) {$\hpat{2143}$};
		\foreach \i/\j in {1/i,2/j,3/k,4/\ell}{
		\coordinate (\i) at (-5+\i,0);
		\filldraw[color=black] (\i) circle (2pt);
		\node[below] at (\i) {$\j$};}
		\draw[thick] (1) -- (4);
		\draw[thick, bend left=80] (1) edge (3);
		\draw[thick, bend left=80] (2) edge (4);
		\draw[thick, bend left=80] (1) edge (4);
		\node at (2.5,1.75) {$\hpat{1324}$};
		\foreach \i/\j in {1/i,2/j,3/k,4/\ell}{
		\coordinate (\i) at (\i,0);
		\filldraw[color=black] (\i) circle (2pt);
		\node[below] at (\i) {$\j$};}
		\draw[thick] (1) -- (4);
		\draw[thick, bend left=80] (1) edge (3);
		\draw[thick, bend left=80] (2) edge (4);
		
		\foreach \i/\j in {1/i,2/j,3/k,4/\ell}{
		\coordinate (\i) at (\i,-3);
		\filldraw[color=black] (\i) circle (2pt);
		\node[below] at (\i) {$\j$};}
		\draw[thick] (1) -- (4);
		\draw[thick, bend left=80] (1) edge (3);
		\draw[thick, bend left=80] (2) edge (4);
		\draw[thick, bend left=80] (1) edge (4);
		\node at (7.5,1.78) {$\hpat{1243}$};
		\foreach \i/\j in {1/i,2/j,3/k,4/\ell}{
		\coordinate (\i) at (5+\i,0);
		\filldraw[color=black] (\i) circle (2pt);
		\node[below] at (\i) {$\j$};}
		\draw[thick] (1) -- (4);
		\draw[thick, bend left=80] (2) edge (4);
		
		\foreach \i/\j in {1/i,2/j,3/k,4/\ell}{
		\coordinate (\i) at (5+\i,-3);
		\filldraw[color=black] (\i) circle (2pt);
		\node[below] at (\i) {$\j$};}
		\draw[thick] (1) -- (4);
		\draw[thick, bend left=80] (1) edge (3);
		\draw[thick, bend left=80] (2) edge (4);
		\node at (12.5,1.78) {$\hpat{2134}$};
		\foreach \i/\j in {1/i,2/j,3/k,4/\ell}{
		\coordinate (\i) at (10+\i,0);
		\filldraw[color=black] (\i) circle (2pt);
		\node[below] at (\i) {$\j$};}
		\draw[thick] (1) -- (4);
		\draw[thick, bend left=80] (1) edge (3);
		
		\foreach \i/\j in {1/i,2/j,3/k,4/\ell}{
		\coordinate (\i) at (10+\i,-3);
		\filldraw[color=black] (\i) circle (2pt);
		\node[below] at (\i) {$\j$};}
		\draw[thick] (1) -- (4);
		\draw[thick, bend left=80] (1) edge (3);
		\draw[thick, bend left=80] (2) edge (4);
		\node at (17.5,1.75) {$\hpat{1423},  ~\hpat{2314}$};
		\foreach \i/\j in {1/i,2/j,3/k,4/\ell}{
		\coordinate (\i) at (15+\i,0);
		\filldraw[color=black] (\i) circle (2pt);
		\node[below] at (\i) {$\j$};}
		\draw[thick] (1) -- (4);
		\draw[thick, bend left=80] (1) edge (3);
		\draw[thick, bend left=80] (2) edge (4);
		\node at (22.5,1.78) {$\hpat{2413}$};
		\foreach \i/\j in {1/i,2/j,3/k,4/\ell}{
		\coordinate (\i) at (20+\i,0);
		\filldraw[color=black] (\i) circle (2pt);
		\node[below] at (\i) {$\j$};}
		\draw[thick] (1) -- (4);
	\end{tikzpicture}
	\end{center}
	\caption{Seven associated patterns with induced subgraphs of $\mathrm{inc}(h)$ on $\{i,j,k,\ell\}$.}
	\label{fig:patterns}
\end{figure}

Since all the patterns in Definition~\ref{def:pattern4} have length four, every incomparability graph whose connected components consist of at most three vertices automatically avoids all such patterns.
Therefore, to detect the occurrence of an associated pattern, it suffices to examine only the parts of the graph that contain connected components with at least four vertices. 
To check whether \(w\in \gen\) contains the associated pattern \(\hpat{p}\),
we first determine whether \(w\) itself contains the permutation pattern \(p\).
If it does, i.e., $w(i)w(j)w(k)w(\ell)$ has the same relative order as $p$, then we examine the induced subgraph of \(\mathrm{inc}(h)\) obtained by restricting to the vertices \(\{i,j,k,\ell\}\),
and verify whether it coincides with the corresponding graph shown in Figure~\ref{fig:patterns}.

\begin{example}\label{ex:pattern} 
Let $h=(3, 5, 6, 6, 6, 7, 8, 8)$ be a Hessenberg function. Its incomparability graph is given in Figure~\ref{fig:induced_inc}.
Consider the permutation $w=  2\,7 \,3\,1 \,8\,4\,5\,6\in\gen$.

For illustration, we examine several of the subwords of $w$.
To determine whether each subword corresponds to an associated pattern, 
we consider the induced subgraph of $\mathrm{inc}(h)$ corresponding to that subword.

\begin{itemize}
\item The subwords $3184$, $2738$, $2714$, and $1845$ correspond to the patterns
$2143$, $1324$, $2413$, and $1423$, respectively.
\item The subgraphs of $\mathrm{inc}(h)$ induced by 
$\{3,4,5,6\}$, $\{1,2,3,5\}$, $\{1,2,4,6\}$, and $\{4,5,6,7\}$ 
are shown in Figure~\ref{fig:induced_inc}.
\end{itemize}
Based on these, $w$ has the associated patterns 
$\hpat{2143}$, $\hpat{1324}$, and $\hpat{2413}$.
However, the subword $1845$ does not yield the pattern 
$\hpat{1423}$, since the induced subgraph on $\{4,5,6,7\}$ 
lacks the edge $\{5,7\}$.

\end{example}

\begin{figure}[ht]  
 \begin{tikzpicture}
        \foreach \i in {1,2,3,4, 5,6, 7, 8}{
		\coordinate (\i) at (-5+\i,0);
		\filldraw[color=black] (\i) circle (2pt);
		\node[below] at (\i) {$\i$};}
		\draw[thick] (1) -- (8);
		\draw[thick, bend left=80] (1) edge (3);
		\draw[thick, bend left=80] (2) edge (4);
		\draw[thick, bend left=80] (2) edge (5);
		\draw[thick, bend left=80] (3) edge (5);
		\draw[thick, bend left=80] (3) edge (6);
		\draw[thick, bend left=80] (4) edge (6);
		\node at (-0.4,-1) {$\mathrm{inc}(h)$};
    \end{tikzpicture}\\[3ex]

 \begin{tikzpicture}[scale=0.7]
 \foreach \i/\j in {1/i,2/j,3/k,4/\ell}{
		\coordinate (\i) at (0+\i,0);
		\filldraw[color=black] (\i) circle (2pt);}
		\node[below] at (1) {$3$};
		\node[below] at (2) {$4$};
		\node[below] at (3) {$5$};
		\node[below] at (4) {$6$};
		\draw[thick, bend left=80] (1) edge (3);
		\draw[thick, bend left=80] (1) edge (4);
		\draw[thick, bend left=80] (2) edge (4);
		\draw[thick] (1) -- (4);	
		
\foreach \i/\j in {1/i,2/j,3/k,4/\ell}{
		\coordinate (\i) at (5+\i,0);
		\filldraw[color=black] (\i) circle (2pt);}
		\node[below] at (1) {$1$};
		\node[below] at (2) {$2$};
		\node[below] at (3) {$3$};
		\node[below] at (4) {$5$};
		\draw[thick, bend left=80] (1) edge (3);
		\draw[thick, bend left=80] (2) edge (4);
		\draw[thick] (1) -- (4);

\foreach \i/\j in {1/i,2/j,3/k,4/\ell}{
		\coordinate (\i) at (15+\i,0);
		\filldraw[color=black] (\i) circle (2pt);}
		\node[below] at (1) {$4$};
		\node[below] at (2) {$5$};
		\node[below] at (3) {$6$};
		\node[below] at (4) {$7$};
		\draw[thick, bend left=80] (1) edge (3);
		\draw[thick] (1) -- (4);

		
\foreach \i/\j in {1/i,2/j,3/k,4/\ell}{
		\coordinate (\i) at (10+\i,0);
		\filldraw[color=black] (\i) circle (2pt);}
		\node[below] at (1) {$1$};
		\node[below] at (2) {$2$};
		\node[below] at (3) {$4$};
		\node[below] at (4) {$6$};
		\draw[thick] (1) -- (4);
	\end{tikzpicture}
\caption{The incomparability graph of $h=(3,5,6,6,6,7,8,8)$ and several of its induced subgraphs.}\label{fig:induced_inc}
\end{figure}

\begin{remark}\label{rmk:permuto}
\begin{enumerate}
\item When $h=(2, 3, \dots, n, n)$, the corresponding Hessenberg variety is the permutohedral variety, which is a smooth projective toric variety. From the definition, every permutation avoids the patterns (1) to (6). Furthermore, every permutation in $\gen$ of the permutohedral variety avoids the pattern ${2413}$, see \cite[Lemma~5.21]{CHL}. For $w\in\gen$, $\Gamma_{w,h}$ forms a face containing $w$ and $w_0$ in the permutohedron and the Hessenberg Schubert variety $\Owh{w}$ is the smooth toric variety corresponding to the face. Furthermore, for $w\in\Sn{n}$, the Hessenberg Schubert variety $\Owh{w}$ is isomorphic to $\Owh{\widetilde{w}}$ and it is the smooth toric variety corresponding to the face of the permutohedron whose $1$-skeleton forms $\Gamma_{w,h}$. See Figure~\ref{fig:2344}.
\item Recently, Insko, Precup, and Woo~\cite{IPW} classified all singular permutation flags in each regular Hessenberg variety with the Hessenberg function \((2,3,\dots,n,n)\) of type \(A\), relating them to certain combinatorial patterns, and further generalized these results to all Lie types using root-theoretic data.
Since we only consider regular semisimple Hessenberg varieties, we obtain the same result only in the case corresponding to (1).
\end{enumerate}
\end{remark}

\begin{figure}
	\begin{center}
	\begin{tikzpicture}[scale=.6]
		\coordinate (0) at (0,0);
		\foreach \i/\j in {-3/1,0/2,3/3}{
		\coordinate (\j) at (\i,2);}
		\foreach \i/\j in {-6/4,-3/5,0/6,3/7,6/8}{
		\coordinate (\j) at (\i,4);}
		\foreach \i/\j in {-7.5/9,-4.5/10,-1.5/11,1.5/12,4.5/13,7.5/14}{
		\coordinate (\j) at (\i,6);}
		\foreach \i/\j in {-6/15,-3/16,0/17,3/18,6/19}{
		\coordinate (\j) at (\i,8);}
		\foreach \i/\j in {-3/20,0/21,3/22}{
		\coordinate (\j) at (\i,10);}
		\coordinate (23) at (0,12);
		\draw[color=gray] (0) -- (1);
		\draw[color=gray] (0) -- (2);
		\draw[color=gray] (0) -- (3);
		\draw[color=gray] (1) -- (4);
		\draw[color=gray] (1) -- (6);
		\draw[color=gray] (2) -- (5);
		\draw[color=gray] (2) -- (7);
		\draw[color=gray] (4) -- (9);
		\draw[color=gray] (4) -- (10);
		\draw[ultra thick, color=purple] (5) -- (9);
		\draw[ultra thick, color=purple] (5) -- (12);
		\draw[ultra thick, color=purple] (9) -- (15);
		\draw[color=gray] (3) -- (6);
		\draw[color=gray] (3) -- (8);
		\draw[color=gray] (6) -- (11);
		\draw[color=gray] (7) -- (12);
		\draw[color=gray] (7) -- (14);
		\draw[color=gray] (8) -- (13);
		\draw[color=gray] (8) -- (14);
		\draw[color=gray] (10) -- (15);
		\draw[color=gray] (10) -- (16);
		\draw[color=gray] (11) -- (16);
		\draw[color=gray] (11) -- (18);
		\draw[ultra thick, color=purple] (12) -- (17);
		\draw[ultra thick, color=black] (13) -- (18);
		\draw[ultra thick, color=black] (13) -- (19);
		\draw[color=gray] (14) -- (19);
		\draw[ultra thick, color=purple] (15) -- (20);
		\draw[color=gray] (16) -- (21);
		\draw[ultra thick, color=purple] (17) -- (20);
		\draw[color=gray] (17) -- (22);
		\draw[ultra thick, color=black] (18) -- (21);
		\draw[ultra thick, color=black] (19) -- (22);
		\draw[color=gray] (20) -- (23);
		\draw[ultra thick, color=black] (21) -- (23);
		\draw[ultra thick, color=black] (22) -- (23);
		\foreach \i/\j in {0/1234, 1/1243, 2/1324, 3/2134, 4/1423, 5/1342, 6/2143, 7/3124, 8/2314}{
		\node[below] at (\i) {\scriptsize \j};} 
		\foreach \i/\j in {9/1432, 10/4123, 11/2413, 12/3142, 13/2341, 14/3214}{
		\node[below] at (\i) {\scriptsize \j};} 
		\foreach \i/\j in {15/4132, 16/4213, 17/3412, 18/2431, 19/3241, 20/4312, 21/4231, 22/3421, 23/4321}{
		\node[above] at (\i) {\scriptsize \j};} 
		\foreach \i in {0,1,...,23}{
		\filldraw[color=gray] (\i) circle (4pt);}
		\foreach \i in {5,9,12,15,17,20}{
		\filldraw[color=black] (\i) circle (4pt);}
		\foreach \i in {13,18,19,21,22,23}{
		\filldraw[color=black] (\i) circle (4pt);}
	\end{tikzpicture}
	\end{center}
	\caption{The induced subgraphs $\Gamma_{w,h}$ (in red) and $\Gamma_{\tilde{w},h}$ (in black) of $\Gamma_{h}$ for $h=(2,3,4,4)$ and $w=1342$.}
	\label{fig:2344}
\end{figure}

\begin{theorem}\label{thm:irregular}
For a Hessenberg function $h$ and $w\in\gen$, if $w$ contains one of associated patterns in the set $\{ \hpat{2143},  \hpat{1324}, \hpat{1243},\hpat{2134}, \hpat{1423}, \hpat{2314}, \hpat{2413} \}$, then the graph $\Gamma_{w,h}$ is irregular. 
\end{theorem}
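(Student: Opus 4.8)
The plan is to reduce the statement to a single strict inequality of vertex degrees, and then to use the pattern to locate one edge of $\Gamma_{w,h}$ across which the injection of Lemma~\ref{lem:injection} fails to be surjective.

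First I would record the reduction. By Theorem~\ref{thm:interval} the vertex set of $\Gamma_{w,h}$ is $[w,w_0]=[w,w_0]_h$, so by Lemma~\ref{lem:h-maximum} the permutations $w$ and $w_0$ are the minimum and the maximum of the vertex set in the $h$-Bruhat order; combining this with Theorem~\ref{thm:increasing}, every vertex $u$ of $\Gamma_{w,h}$ satisfies $\deg(w)=|E_{w,h}(w)|\le |E_{w,h}(u)|=\deg(u)\le |E_{w,h}(w_0)|=\deg(w_0)$. Hence $\Gamma_{w,h}$ is regular if and only if $|E_{w,h}(w)|=|E_{w,h}(w_0)|$, and it is enough to exhibit, for a generator $w$ containing one of the seven patterns, two vertices $u\prec_h v$ of $\Gamma_{w,h}$ with $|E_{w,h}(u)|<|E_{w,h}(v)|$. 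By Theorem~\ref{thm:h-chain} there is a saturated $h$-chain $w=u_0\prec_h u_1\prec_h\cdots\prec_h u_N=w_0$ with $u_{t+1}=u_t(\alpha_t,\beta_t)$, and the maps $\phi_{u_tu_{t+1}}$ of Lemma~\ref{lem:injection} are defined and compose to an injection $E_{w,h}(w)\hookrightarrow E_{w,h}(w_0)$; so I only need a single step, of one conveniently chosen chain, at which $\phi$ is not onto.

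Next I would isolate the mechanism for non-surjectivity. If $v=u(\alpha,\beta)$ with $u(\alpha)<u(\beta)$ and $\alpha<\beta\le h(\alpha)$, then unwinding the definition of $\phi_{uv}$ shows that a pair $(p,\alpha)$ with $p<\alpha\le h(p)$ lies in $E_{w,h}(v)$ but outside the image of $\phi_{uv}$ exactly when
$$
v(p,\alpha)\succeq w,\qquad u(p,\alpha)\not\succeq w,\qquad u(p,\beta)\not\succeq w,
$$
and symmetrically a pair $(\beta,q)$ with $\beta<q\le h(\beta)$ is missed exactly when $v(\beta,q)\succeq w$, $u(\beta,q)\not\succeq w$, and $u(\alpha,q)\not\succeq w$. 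So, given a pattern occurrence at positions $i<j<k<\ell$, I would first apply a short sequence of length-increasing $h$-admissible transpositions to $w$, all supported on $\{i,j,k,\ell\}$ (every intermediate permutation still lies in $[w,w_0]=[w,w_0]_h$ by Proposition~\ref{prop:h-interval} and Lemma~\ref{lem:h-maximum}), to reach a permutation $u$ realizing a prescribed local configuration, then take $(\alpha,\beta)$ to be one further transposition among these positions and $p$ (resp.\ $q$) to be another of them, and verify the three Bruhat (non-)containments with Lemma~\ref{lemma:Bruhat order}; each of them reduces to a comparison among the values of $w$ at $\{i,j,k,\ell\}$ together with the window inequalities of Definition~\ref{def:pattern4}. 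For instance, for $\hpat{2143}$ one can use $u\coloneqq w(j,k)(i,j)$ and $v\coloneqq u(k,\ell)$, which form an $h$-chain $w\prec_h w(j,k)\prec_h u\prec_h v$, with witness $p=i$: here $(i,k)$ is an $h$-edge at $v$ since $k<\ell\le h(i)$, and the three conditions $v(i,k)\succeq w$, $u(i,k)\not\succeq w$, and $u(i,\ell)\not\succeq w$ all follow from $w(j)<w(i)<w(\ell)<w(k)$ by Lemma~\ref{lemma:Bruhat order}, whence $\deg(u)<\deg(v)$.

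I expect the main obstacle to be the uniformity of this argument over the seven patterns. The skeleton is the same each time, but the patterns differ in their window structure: $\hpat{2143}$ keeps all four positions inside the single window $[i,h(i)]$, while, say, $\hpat{2413}$ only provides the staircase $j\le h(i)<k\le h(j)<\ell\le h(k)$, so fewer transpositions among $\{i,j,k,\ell\}$ are $h$-admissible and the preliminary chain together with the choice of $(\alpha,\beta)$ and $p$ (or $q$) must be adjusted pattern by pattern; moreover $\hpat{1324}$ splits into the two sub-cases $\ell\le h(i)$ and $\ell>h(i)$ visible in Figure~\ref{fig:patterns}. In each case one has to check that every intermediate transposition is genuinely length-increasing and $h$-admissible, so that the $\phi$-maps are defined, and that the witnessing pair is an $h$-edge at $v$; these are precisely the points where the hypotheses $k\le h(i)$, $h(i)<\ell$, and the like are used. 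Once this bookkeeping is in place, each of the seven (nine, counting sub-cases) verifications via Lemma~\ref{lemma:Bruhat order} is short.
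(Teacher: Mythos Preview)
Your plan is the paper's plan: for each pattern build $u\prec_h v=u(\alpha,\beta)$ in $[w,w_0]$, with $u$ obtained from $w$ by one or two $h$-admissible transpositions on the pattern positions, and exhibit an element of $E_{w,h}(v)$ not in the image of the injection $\phi_{uv}$ of Lemma~\ref{lem:injection}. Your worked $\hpat{2143}$ case coincides with the paper's (your $w(j,k)(i,j)$ equals the paper's $w(i,k)(j,k)$).

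There is one genuine gap in your mechanism. You only allow witnesses of the shapes $(p,\alpha)$ and $(\beta,q)$, i.e.\ pairs touching the edge $\{\alpha,\beta\}$, and propose to take $p$ or $q$ among the four pattern positions. For $\hpat{1324}$ this cannot succeed: already for $w=1324$ in $\mathfrak S_4$ with $h=(4,4,4,4)$, at every $h$-edge $u\to v$ from a degree-$5$ to a degree-$6$ vertex the unique missed element is a pair \emph{disjoint} from $\{\alpha,\beta\}$. The paper handles $\hpat{1324}$ with $u=w(k,\ell)$, $v=u(i,k)$, and witness $(j,\ell)$, where $\{j,\ell\}\cap\{i,k\}=\emptyset$; such a pair has only itself as possible $\phi$-preimage, and one checks $u(j,\ell)\not\succeq w$ while $v(j,\ell)\succeq w$. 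This single argument works uniformly, so the sub-case split $\ell\le h(i)$ versus $\ell>h(i)$ you anticipate is unnecessary. A second, more minor point: your ``exactly when'' conditions for $(p,\alpha)$ and $(\beta,q)$ are too strong; the correct alternative to the third Bruhat condition is the window failure $\beta>h(p)$ (resp.\ $q>h(\alpha)$), and the paper uses precisely this alternative in the remaining five cases $\hpat{1243}$, $\hpat{2134}$, $\hpat{1423}$, $\hpat{2314}$, $\hpat{2413}$ to rule out the second preimage.
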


\begin{proof}
By Lemma~\ref{lem:injection}, the map $\phi_{uv}\colon E_{w,h}(u)\to E_{w,h}(v)$ is injective for every edge $\{u,v\}\in \Gamma_{w,h}$ with $u\prec v$. For a permutation $w$ containing an associated pattern in Definition~\ref{def:pattern4}, to show that $\Gamma_{w,h}$ is irregular, we find an appropriate edge $\{u,v\}$ of $\Gamma_{w,h}$ with $u\prec v$ such that $\phi_{uv}$ is not surjective.

\textbf{Case $\hpat{2143}$.} Suppose that $w(j)<w(i)<w(\ell)<w(k)$ for some 
$i<j<k<\ell\leq h(i)$. 
Set $u\coloneqq w(i,k)(j,k)$ and $v\coloneqq u(k,\ell)$. Note that $(i,k)\in E_{w,h}(v)$ since $k\leq h(i)$ and
$$
v(i,k)=w(j,\ell)(i,j)\succ w(j,\ell)\succ w\,.
$$
In this case, if $\phi_{uv}(\tau)=(i,k)$, then $\tau\in \{ (i,k), (i,\ell) \}$. However, $u(i,k)=w(i,j)\prec w$ and $u(i,\ell)\not \succeq w$ since
$$
(u(i,\ell))[k]\!\!\uparrow=\{w(1),w(2),\dots,w(k-1),w(\ell)\}\!\!\uparrow < w[k]\!\!\uparrow.
$$
Therefore $(i,k), (i,\ell)\not \in E_{w,h}(u)$ and $\phi_{uv}$ is not surjective. 

\textbf{Case $\hpat{1324}$.} Suppose that $w(i)<w(k)<w(j)<w(\ell)$ for some $i<j<k<\ell\leq h(j)$ with $k\leq h(i)$.
Set $u\coloneqq w(k,\ell)$ and $v\coloneqq u(i,k)$. Note that $(j,\ell)\in E_{w,h}(v)$ since 
$\ell \leq h(j)$ and 
$$
v(j,\ell)=w(i,k)(j,\ell)(i,j)\succ w(i,k)(j,\ell)\succ w(i,k) \succ w\,.
$$ 
By the definition of $\phi_{uv}$, if $\phi_{uv}(\tau)=(j,\ell)$, then $\tau$ must be $(j,\ell)$. However,
$$
(u(j,\ell))[j]\!\!\uparrow=\{w(1),w(2),\dots,w(j-1),w(k)\}\!\!\uparrow < w[j]\!\!\uparrow,
$$
and this yields that $u(j,\ell)\not\succeq w$. Hence $(j,\ell)\not \in E_{w,h}(u)$ and $\phi_{uv}$ is not surjective. 

\textbf{Case $\hpat{1243}$.} Suppose that $w(i)<w(j)<w(\ell)<w(k)$ for some $i<j<k<\ell\leq h(j)$ with $j\leq h(i)<\ell$. Set $u\coloneqq w(j,k)$ and $v\coloneqq u(i,j)$. Note that $(j,\ell)\in E_{w,h}(v)$ since 
$\ell\leq h(j)$ and $v(j,\ell) \succ v \succeq w$. In this case, if $\phi_{uv}(\tau)=(j,\ell)$, then $\tau$ must be $(j,\ell)$ since $h(i)<\ell$. However, $u(j,\ell)\not \succeq w$ since
$$
(u(j,\ell))[k]\!\!\uparrow=\{w(1),w(2),\dots,w(k-1),w(\ell)\}\!\!\uparrow < w[k]\!\!\uparrow.
$$
So $(j,\ell)\not \in E_{w,h}(u)$ and $\phi_{uv}$ is not surjective. 

\textbf{Case $\hpat{2134}$.} Suppose that $w(j)<w(i)<w(k)<w(\ell)$ for some $i<j<k<\ell\leq h(k)$ with $k\leq h(i)<\ell$. Set $u\coloneqq w(j,k)$ and $v\coloneqq u(k,\ell)$. Note that $(i,k)\in E_{w,h}(v)$ since 
$k\leq h(i)$ and $v(i,k) \succ v \succeq w$. Similarly to the above case, if $\phi_{uv}(\tau)=(i,k)$, then $\tau$ must be $(i,k)$ since $h(i)<\ell$. However, $u(i,k)\not \succeq w$ since
$$
(u(i,k))[i]\!\!\uparrow=\{w(1),w(2),\dots,w(i-1),w(j)\}\!\!\uparrow < w[i]\!\!\uparrow.
$$
Thus $(i,k)\not \in E_{w,h}(u)$ and $\phi_{uv}$ is not surjective. 

\textbf{Case $\hpat{1423}$.} Suppose that $w(i)<w(k)<w(\ell)<w(j)$ 
for some $i<j<k<\ell\leq h(j)$ with $k\leq h(i)<\ell$. Set $u\coloneqq w(i,k)$ and $v\coloneqq u(i,j)$. Note that $(j,\ell)\in E_{w,h}(v)$ since 
$\ell \leq h(j)$ and $v(j,\ell) \succ v \succeq w$. Similarly to the above case, if $\phi_{uv}(\tau)=(j,\ell)$, then $\tau$ must be $(j,\ell)$ since $h(i)<\ell$. However, $u(j,\ell)\not \succeq w$ since
$$
(u(j,\ell))[k]\!\!\uparrow=((w[k]-\{w(j)\})\cup\{w(\ell)\})\!\!\uparrow < w[k]\!\!\uparrow.
$$
Thus $(j,\ell)\not \in E_{w,h}(u)$ and $\phi_{uv}$ is not surjective. 

\textbf{Case $\hpat{2314}$.} Suppose that $w(k)<w(i)<w(j)<w(\ell)$ for some $i<j<k<\ell\leq h(j)$ with $k\leq h(i)<\ell$. Set $u\coloneqq w(j,\ell)$ and $v\coloneqq u(k,\ell)$. Note that $(i,k)\in E_{w,h}(v)$ since 
$k \leq h(i)$ and $v(i,k) \succ v \succeq w$. Again, if $\phi_{uv}(\tau)=(i,k)$, then $\tau$ must be $(i,k)$ since $h(i)<\ell$. However, $u(i,k)\not \succeq w$ since
$$
(u(i,k))[i]\!\!\uparrow=\{w(1),w(2),\dots,w(i-1),w(k)\}\!\!\uparrow < w[i]\!\!\uparrow.
$$
Thus $(i,k)\not \in E_{w,h}(u)$ and $\phi_{uv}$ is not surjective. 

\textbf{Case $\hpat{2413}$.} Suppose that $w(k)<w(i)<w(\ell)<w(j)$ for some $i<j\leq h(i)<k\leq h(j)<\ell\leq h(k)$. Set $u\coloneqq w(i,j)$ and $v\coloneqq u(k,\ell)$. Note that $(j,k)\in E_{w,h}(v)$ since 
$k \leq h(j)$ and $v(j,k) \succ v \succeq w$. Again, if $\phi_{uv}(\tau)=(j,k)$, then $\tau$ must be $(j,k)$ since $h(j)<\ell$. However, $u(j,k)\not \succeq w$ since
$$
(u(j,k))[j]\!\!\uparrow=((w[j]-\{w(i)\})\cup\{w(k)\})\!\!\uparrow < w[j]\!\!\uparrow.
$$
Thus $(j,k)\not \in E_{w,h}(u)$ and $\phi_{uv}$ is not surjective. This completes the proof.

\end{proof}

As a consequence of Theorem~\ref{thm:irregular}, we can provide a necessary condition for $\Omega_{w}\cap\Hess(S,h)$ to be smooth when $w\in\gen$.

\begin{theorem}\label{thm:not-smooth_generator}
    For a Hessenberg function $h$, if $w$ is a permutation in $\gen$ containing one of the associated patterns in the set $\{ \hpat{2143}, \hpat{1324}, \hpat{1243},\hpat{2134}, \hpat{1423}, \hpat{2314}, \hpat{2413} \}$, then the intersection $\Omega_{w}\cap\Hess(S,h)$ is not smooth.
\end{theorem}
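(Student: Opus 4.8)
The plan is to deduce Theorem~\ref{thm:not-smooth_generator} directly from Theorem~\ref{thm:irregular} together with Proposition~\ref{prop:Hessenberg Schubert variety}. First I would recall that Proposition~\ref{prop:Hessenberg Schubert variety} says that if $\Omega_{w,h}$ were smooth, then for every $v\in\Omega_{w,h}^T$ the inequalities $\dim T_v(\Omega_{w,h})\geq \deg(v)\geq d_h-\ell_h(w)$ would all be equalities; in particular $\deg(v)=d_h-\ell_h(w)$ is independent of $v$, so the graph $\Gamma_{w,h}$ would be regular (every vertex of $\Gamma_{w,h}$ lies in $\Omega_{w,h}^T$, and $\deg(v)$ in the statement of Proposition~\ref{prop:Hessenberg Schubert variety} is exactly $|E_{w,h}(v)|$, the degree of $v$ in $\Gamma_{w,h}$).

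Next, I would invoke Theorem~\ref{thm:irregular}: since $w$ is a generator containing one of the seven associated patterns $\hpat{2143},\hpat{1324},\hpat{1243},\hpat{2134},\hpat{1423},\hpat{2314},\hpat{2413}$, the induced subgraph $\Gamma_{w,h}$ is irregular, i.e.\ not all vertices have the same degree. This contradicts the regularity of $\Gamma_{w,h}$ forced by smoothness in the previous paragraph. Hence $\Omega_{w,h}$ cannot be smooth. The whole argument is a short syllogism, so I would write it in just a few lines.

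There is essentially no obstacle here; the only point requiring a word of care is the identification of ``$\deg(v)$'' in Proposition~\ref{prop:Hessenberg Schubert variety} with the degree of $v$ as a vertex of $\Gamma_{w,h}$, which is immediate from the definition of $\Gamma_{w,h}$ as the induced subgraph of $\Gamma_h$ on $\Omega_{w,h}^T$ and the description $E_{w,h}(v)=\{(i,j)\mid v(i,j)\succeq w,\ 1\leq i<j\leq h(i)\}$ of its incident edges. All the real work has already been done in Theorem~\ref{thm:irregular} via the injection $\phi_{uv}$ of Lemma~\ref{lem:injection}; the present statement is merely its geometric corollary.

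\begin{proof}
Suppose, for contradiction, that $\Omega_{w,h}$ is smooth. By Proposition~\ref{prop:Hessenberg Schubert variety}, for every $v\in\Omega_{w,h}^T$ we then have
$$
\dim T_v(\Omega_{w,h})=\deg(v)=d_h-\ell_h(w).
$$
In particular $\deg(v)$ takes the same value $d_h-\ell_h(w)$ for all $v\in\Omega_{w,h}^T$. Since $\Gamma_{w,h}$ is by definition the subgraph of $\Gamma_h$ induced by $\Omega_{w,h}^T$, and since for $v\in\Omega_{w,h}^T$ the set of edges of $\Gamma_{w,h}$ incident to $v$ is exactly $\{\,\{v,v(i,j)\}\mid (i,j)\in E_{w,h}(v)\,\}$, the degree of $v$ as a vertex of $\Gamma_{w,h}$ equals $|E_{w,h}(v)|=\deg(v)$. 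Thus $\Gamma_{w,h}$ is a regular graph.

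On the other hand, $w$ is a generator for $h$ containing one of the associated patterns in
$\{\hpat{2143},\hpat{1324},\hpat{1243},\hpat{2134},\hpat{1423},\hpat{2314},\hpat{2413}\}$,
so Theorem~\ref{thm:irregular} implies that $\Gamma_{w,h}$ is irregular. This contradicts the previous paragraph, and therefore $\Omega_{w,h}$ is not smooth.
\end{proof}
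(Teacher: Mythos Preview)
Your proof is correct and follows essentially the same approach as the paper: both combine Theorem~\ref{thm:irregular} with Proposition~\ref{prop:Hessenberg Schubert variety} to obtain the contrapositive ``irregular $\Rightarrow$ not smooth.'' The only cosmetic difference is that the paper additionally invokes Theorem~\ref{thm:increasing} to single out $w_0$ as a vertex with $\deg(w_0)>|E_{w,h}(w)|=\dim\Omega_{w,h}$, whereas you argue directly that smoothness forces all degrees to equal $d_h-\ell_h(w)$; either way the argument is the short syllogism you describe.
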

\begin{proof}
    If $w$  is a permutation in $\gen$ containing one of the associated patterns in the set above, then $\Gamma_{w,h}$ is an irregular graph by Theorem~\ref{thm:irregular}. 
    Therefore, $\Omega_{w}\cap\Hess(S,h)$ is not smooth by Propositions~\ref{prop:Hessenberg Schubert intersection} and~\ref{prop:h-interval}.
\end{proof}


\section{Regularity of the graphs \texorpdfstring{$\Gamma_{w,h}$}{Gwh}}\label{sec:regular}

In this section, we aim to characterize the regularity of $\Gamma_{w,h}$ in terms of pattern avoidance. First, we focus on the permutations in $\gen$ for the given Hessenberg function~$h$ and extend our work described in Theorem~\ref{thm:irregular} by considering the converse of the theorem. The permutations in $\gen$ avoiding the seven patterns have nice properties that ensure the regularity of~$\Gamma_{w,h}$. Subsequently, we assess more patterns to deal with an arbitrary permutation. The fact that for each $w$ there exists a unique permutation $\widetilde{w}\in\gen$ satisfying~\eqref{eq:generator}, where $\Gamma_{w,h}$ and $\Gamma_{\widetilde{w},h}$ are isomorphic graphs, plays an essential role.

\subsection{Well-aligned permutations in \texorpdfstring{$\gen$}{gen}}\label{sec:organized}

For a permutation $w\in\mathfrak{S}_n$, we define
$$
Y(w)\coloneqq \{w(i) \mid i\ge w^{-1}(1) \text{ and } w(i)\le w(n)\}.
$$
We always have $1 \in Y(w)$, and we denote the elements of $Y(w)$ as
$Y(w) = \{y_0, y_1, \dots, y_r\}$ with $y_0=1 < y_1 < \cdots < y_r=w(n)$, 
where it is possible that $y_0 = y_r$ (i.e., $Y(w)$ may consist of a single element, namely $1$).
The following lemma shows some properties of permutations in $\gen$.

\begin{lemma}\label{lem:y}
For a Hessenberg function $h$, let $w$ be a permutation in $\gen$. 
\begin{enumerate}
\item If $h(i)<j$ and $w(i)<w(j)$, then there exist $x$ and $z$ with $i<x\leq h(i)<j$ and $i<z<j\leq h(z)$ such that $w(i)<w(x)$ and $w(z)<w(j)$. 
\item If $ Y(w)=\{y_0,y_1,\dots,y_{r}\}$, then $w^{-1}(y_i)\leq h(w^{-1}(y_{i-1}))$ for $1\leq i \leq r$.
\end{enumerate}
\end{lemma}

\begin{proof}
Recall that $w^{-1}(a+1)\leq h(w^{-1}(a))$ for $1\leq a\leq n-1$ since $w\in\gen$.
\begin{enumerate}
\item Let $w(x)$ be the smallest integer satisfying that $i<x$ and $w(i)<w(x)$. Then 
$i<x\leq h(w^{-1}(w(x)-1))\leq h(i)<j$ and $w(x)<w(j)$ since $w(x)$ is the smallest and $w\in \gen$.
Similarly, let $w(z)$ be the largest integer satisfying that $z<j$ and $w(z)<w(j)$. Then 
$i<z<j\leq w^{-1}(w(z)+1)\leq h(z)$ and $w(i)<w(z)$ since $w(z)$ is the largest and $w\in \gen$. 
\item If $y_{i}=y_{i-1}+1$, then $w^{-1}(y_i)=w^{-1}(y_{i-1}+1)\leq h(w^{-1}(y_{i-1}))$. On the other hand, if $y_i > y_{i-1}+1$, then $y_{i}-1\not \in Y(w)$ and $w^{-1}(y_{i}-1)<w^{-1}(1)\leq w^{-1}(y_{i-1})$ so that 
$$
w^{-1}(y_i)\leq h(w^{-1}(y_i-1))\leq h(w^{-1}(1))\leq h(w^{-1}(y_{i-1}))\,.
$$ 
\end{enumerate}
This proves the lemma.
\end{proof}

Now, we define the notion of well-aligned permutations and study their properties. We say that $w \in \mathfrak{S}_n$ is a \emph{well-aligned} permutation with $Y(w)=\{y_0,y_1,\dots,y_{r}\}$ if it satisfies that $w^{-1}(y_0)<w^{-1}(y_1)<\cdots<w^{-1}(y_{r})=n$.

For a well-aligned permutation $w$ with $Y(w)=\{y_0,\dots,y_r\}$, 
we define a sequence of permutations $\overline{w}_0, \overline{w}_1, \dots, \overline{w}_r$ 
associated to $w$ by
\[
\overline{w}_0 \coloneqq w, \quad
\overline{w}_m \coloneqq (1,y_m)\,\overline{w}_{m-1} \quad (m=1,\dots,r).
\]
We stress that the notation $\overline{w}_0$ is entirely unrelated to $w_0$, 
the longest permutation, and should not be confused with it.
For the sake of simplicity, we let $\overline{w}\coloneqq \overline{w}_{r}$.

\begin{example}
Consider the two permutations $w=213465$ and $w'=214365$. 
For $w=213465$, we have $Y(w)=\{1,3,4,5\}$, and these elements appear 
in increasing order in the one-line notation. Thus $w$ is well-aligned,
and we obtain the chain
\[
\overline{w}_0=213465 \prec \overline{w}_1=231465 \prec 
\overline{w}_2=234165 \prec \overline{w}=234561 \,.
\]
In contrast, for $w'=214365$, although $Y(w')$ is the same as $Y(w)$,
$w'$ is not well-aligned since 4 precedes 3 in $w'$.
\end{example}

The following proposition shows several properties of $\overline{w}$ for a well-aligned permutation $w\in\gen$.  

\begin{proposition}\label{prop:wbar}
For a Hessenberg function $h$ on $[n]$, if $w\in\gen$ is well-aligned with $Y(w)=\{y_0,y_1,\dots,y_{r}\}$, then the following hold.

\begin{enumerate}
\item $\overline{w}\in\gen$.
\item $[\overline{w},w_0]=\{u\in [w,w_0] \mid u(n)=1\}$.
\item $\{(i,n)\in E_{w,h}(\overline{w}) \}= \{(w^{-1}(y_s),n) \mid h(w^{-1}(y_s))=n \text{~for~} 0\leq s < r \}$.
\end{enumerate}

\end{proposition}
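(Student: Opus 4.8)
The plan is to establish the three parts in order, leveraging the fact that $w$ is a generator and that $\overline{w}$ is obtained from $w$ by the specific sequence of transpositions $\overline{w}_m = \overline{w}_{m-1}(w^{-1}(y_{m-1}), w^{-1}(y_m))$. Throughout I will write $p_s \coloneqq w^{-1}(y_s)$ for $0 \le s \le r$, so that $p_0 < p_1 < \cdots < p_r = n$ by well-organization, and I will record the key structural fact that $\overline{w}$ agrees with $w$ outside the positions $p_0, \dots, p_r$, while on those positions it cyclically shifts: $\overline{w}(p_s) = y_{s-1}$ for $1 \le s \le r$ and $\overline{w}(p_0) = y_r = w(n)$, and in particular $\overline{w}(n) = \overline{w}(p_r) = y_{r-1}$. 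Wait — I must double check: $\overline{w}(n)=1$ should hold by part (2)'s statement, so let me recompute; since $\overline{w}_r = (1,y_r)\overline{w}_{r-1}$ and inductively $\overline{w}_{m}(p_0) = y_m$ while $\overline{w}_m(p_m) = 1$, we get $\overline{w}(p_0) = y_r = w(n)$ and $\overline{w}(n) = \overline{w}(p_r) = 1$. This is the description I will use.

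For part (1), I must verify the generator condition $\overline{w}^{-1}(\overline{w}(i)+1) \le h(i)$ for all $i$ with $\overline{w}(i) \le n-1$. Since $\overline{w}$ differs from $w$ only by permuting the values in $Y(w)$ among the positions $p_0, \dots, p_r$, and since for a value $a \notin Y(w)$ the successor $a+1$ is not affected in a way that changes positions (one checks $\overline{w}^{-1}(a) = w^{-1}(a)$ or traces through the cases), the only cases needing care are when $\overline{w}(i) \in Y(w)$. If $\overline{w}(i) = y_{s-1}$ for some $s \ge 1$, i.e. $i = p_s$, then I need $\overline{w}^{-1}(y_{s-1}+1) \le h(p_s)$. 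When $y_{s-1}+1 = y_s$ this is $\overline{w}^{-1}(y_s) = p_{s-1} \le h(p_s)$, which fails direction-wise unless... hmm, actually $p_{s-1} < p_s \le h(p_s)$ automatically since $h(p_s) \ge p_s$. When $y_{s-1}+1 \notin Y(w)$, then $y_{s-1}+1 < y_{s-1} \le \dots$ is impossible, so $y_{s-1}+1$ lies strictly between consecutive elements of $Y(w)$ or below $y_0 = 1$; in that range $\overline{w}^{-1}$ coincides with $w^{-1}$ and $w^{-1}(y_{s-1}+1) < w^{-1}(1) = p_0 \le p_s \le h(p_s)$. The remaining case $i = p_0$, $\overline{w}(p_0) = y_r = w(n)$: if $w(n) \le n-1$ I need $\overline{w}^{-1}(w(n)+1) \le h(p_0)$; since $w(n)+1 \notin Y(w)$ (as $w(n)$ is the max of $Y(w)$) and $w(n)+1 \le n$, one has $\overline{w}^{-1}(w(n)+1) = w^{-1}(w(n)+1)$, and because $w(n)+1 > w(n) \ge$ everything $\le w(n)$, the position $w^{-1}(w(n)+1)$ satisfies $w^{-1}(w(n)+1) < w^{-1}(1) = p_0$ or I invoke that $w$ is a generator at position $n$ combined with $h$ monotone and Lemma~\ref{lem:y}. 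I expect assembling these subcases cleanly, using Lemma~\ref{lem:y} to handle the jumps in $Y(w)$, to be the main bookkeeping burden, though no individual step is deep.

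For part (2), one inclusion is easy: $\overline{w} \preceq \overline{w}$ and $\overline{w}(n) = 1$, and Lemma~\ref{lemma:Bruhat order} applied to the chain $w = \overline{w}_0 \prec \cdots \prec \overline{w}_r = \overline{w}$ gives $w \preceq \overline{w}$, so $[\overline{w}, w_0] \subseteq \{u \in [w,w_0] : u(n) = 1\}$ once I check every $u \succeq \overline{w}$ has $u(n) = 1$ — which holds because $\overline{w}(n) = 1$ is the minimum possible value, forcing $u(n) = 1$ for all $u \succeq \overline{w}$ by Proposition~\ref{prop:Bruhat order}(2) (the last coordinate can only stay or... no: $u[n]\!\!\uparrow = \{1,\dots,n\}$ trivially; I should instead use that $u \succeq \overline{w}$ and a descending-position argument, or directly that $1$ must appear somewhere and if $u(n) \ne 1$ then some prefix condition fails). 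For the reverse inclusion, given $u \in [w, w_0]$ with $u(n) = 1$, I show $u \succeq \overline{w}$ via Lemma~\ref{lemma:Bruhat order} on the positions where $u$ and $\overline{w}$ differ, using that $\overline{w}$ is "as small as possible among generators-ish permutations with last entry $1$" — concretely, that $\overline{w}[k]\!\!\uparrow$ is coordinatewise minimal, which follows from the generator property of $w$ and the definition of $Y(w)$. The key point is that $Y(w)$ was designed exactly so that $\overline{w}$ is the Bruhat-minimal element of $[w,w_0]$ with $u(n)=1$; I would make this precise by comparing $u[k]\!\!\uparrow$ with $\overline{w}[k]\!\!\uparrow$ for each $k$, splitting on whether $k \ge p_0$ and on the position of $1$ in $u$.

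For part (3), I compute $E_{w,h}(\overline{w})$ restricted to transpositions of the form $(i,n)$: by definition $(i,n) \in E_{w,h}(\overline{w})$ iff $i < n \le h(i)$, i.e. $h(i) = n$, and $\overline{w}(i,n) \succeq w$. Since $\overline{w}(n) = 1$ is minimal, $\overline{w}(i) > 1 = \overline{w}(n)$ always, so $\overline{w}(i,n) \prec \overline{w}$ for every $i < n$ — wait, that would make the set empty, contradicting the claim. Let me reconsider: $(i,n) \in E_{w,h}(\overline{w})$ requires $\overline{w}(i,n) \succeq w$, NOT $\succeq \overline{w}$; since $\overline{w} \succeq w$ and swapping can go down, $\overline{w}(i,n) \succeq w$ is a genuine nontrivial condition. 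So I need: among $i$ with $h(i) = n$, exactly those with $i = p_s$, $0 \le s < r$ (equivalently $\overline{w}(i) \in Y(w) \setminus \{w(n)\}$... no, $\overline{w}(p_s) = y_{s-1}$ for $s \ge 1$ and $\overline{w}(p_0) = y_r$; the claim indexes by $s$ with $h(p_s) = n$ and $0 \le s < r$) satisfy $\overline{w}(p_s, n) \succeq w$. I verify this by Lemma~\ref{lemma:Bruhat order}: $\overline{w}(p_s, n)$ differs from $\overline{w}$ only at positions $p_s$ and $n$, putting $1$ at $p_s$ and $y_{s-1}$ (or $y_r$ if $s=0$) at $n$; comparing with $w$ via the prefix-sums characterization, using that $w$ and $\overline{w}$ already agree outside $\{p_0,\dots,p_r\}$, reduces to an inequality among the $y$'s that Lemma~\ref{lem:y} and the definition of $Y(w)$ supply. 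Conversely, for $i \notin \{p_0, \dots, p_r\}$ with $h(i) = n$ — here $\overline{w}(i) = w(i) \notin Y(w)$, so either $w(i) < 1$ (impossible) or $w(i) > w(n)$, meaning $\overline{w}(i,n)$ places a value $> w(n)$ at position $n$ and $1$ at position $i$; I show this fails $\succeq w$ by exhibiting a violated prefix inequality at $k = i$, exactly as in the irregularity arguments of Theorem~\ref{thm:irregular}. I expect part (2) — pinning down that $\overline{w}$ is precisely the Bruhat-minimum of $\{u \in [w,w_0] : u(n)=1\}$ — to be the main obstacle, since it requires the sharpest use of the generator condition; parts (1) and (3) are then case-checks that ride on the explicit one-line form of $\overline{w}$.
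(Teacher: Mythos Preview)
Your overall strategy matches the paper's: verify the generator condition for $\overline{w}$ by a case split using Lemma~\ref{lem:y}, prove part~(2) by showing $\overline{w}$ is the Bruhat minimum of $\{u\in[w,w_0]:u(n)=1\}$ via the tableau criterion, and handle part~(3) by a direct analysis of when $\overline{w}(i,n)\succeq w$. However, there is a genuine computational error at the very start that propagates through everything.

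Your one-line description of $\overline{w}$ is wrong. You claim (after your self-correction) that $\overline{w}_m(p_0)=y_m$, hence $\overline{w}(p_0)=y_r$ and $\overline{w}(p_s)=y_{s-1}$ for $s\ge 1$. But the left multiplication $\overline{w}_m=(1,y_m)\overline{w}_{m-1}$ swaps the \emph{values} $1$ and $y_m$; since for $m\ge 2$ the value at position $p_0$ in $\overline{w}_{m-1}$ is $y_1$ (not $1$), it is unaffected. The correct description is
\[
\overline{w}(p_s)=y_{s+1}\quad(0\le s<r),\qquad \overline{w}(p_r)=\overline{w}(n)=1,
\]
so on positions $p_0,\dots,p_r$ the word is $y_1\,y_2\cdots y_r\,1$, not $y_r\,y_0\,y_1\cdots y_{r-1}$. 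Equivalently, $\overline{w}^{-1}(y_s)=p_{s-1}$ for $1\le s\le r$. With your shift, the generator check in part~(1) misfires: for $i=p_s$ with $s<r$ one has $\overline{w}(i)=y_{s+1}$, and the relevant inequality is $\overline{w}^{-1}(y_{s+1}+1)\le h(p_s)$. When $y_{s+1}+1=y_{s+2}$ this becomes $p_{s+1}\le h(p_s)$, which is exactly Lemma~\ref{lem:y}; with your formula you are instead trying to bound $p_{s-1}\le h(p_s)$, which is trivial and not what is needed. Similarly, your computation in part~(3) of what $\overline{w}(p_s,n)$ places at position $n$ (you say $y_{s-1}$) is off by the same shift (it is $y_{s+1}$), and the comparison with $w$ via Lemma~\ref{lemma:Bruhat order} then rests on the wrong subsequence.

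There is a second, smaller gap in your plan for part~(3). For the ``only if'' direction you argue that $i\notin\{p_0,\dots,p_r\}$ forces $w(i)\notin Y(w)$ and hence $w(i)>w(n)$. That implication fails when $i<w^{-1}(1)=p_0$: such positions are outside the defining range of $Y(w)$, so $w(i)$ can perfectly well be $\le w(n)$. The paper handles this case separately, observing that if $i<p_0$ then $(\overline{w}(i,n))[i]=w[i-1]\cup\{1\}$, which already violates $\overline{w}(i,n)\succeq w$ at the prefix of length $i$. Only after ruling out $i<p_0$ can one conclude $i\in\{p_0,\dots,p_{r-1}\}$ from $\overline{w}(i)\le y_r$.
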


\begin{proof}

Note that $\overline{w}^{-1}(a)=w^{-1}(a)$ for all $a\not\in Y(w)$.

\begin{enumerate}
\item 
Since $w\in\gen$, 
for $1\leq a\leq n-1$, 
if $a,a+1\not\in Y(w)$, then 
$$
\overline{w}^{-1}(a+1)=w^{-1}(a+1)\leq h(w^{-1}(a))=h(\overline{w}^{-1}(a))\,.
$$
Hence, it suffices to show that $\overline{w}^{-1}(a+1)\leq h(\overline{w}^{-1}(a))$ for $1\leq a\leq n-1$ such that $a\in Y(w)$ or $a+1\in Y(w)$. First, consider the case $a\not \in Y(w)$ and $a+1 \in Y(w)$. Let $y_s-1\not \in Y(w)$ for some $1\leq s \leq r$. 
Since $w\in\gen$ is well-aligned, we have 
$$
\overline{w}^{-1}(y_s)=w^{-1}(y_{s-1}) <w^{-1}(y_s) \leq h(w^{-1}(y_s-1))=h(\overline{w}^{-1}(y_s-1))\,.
$$
Now consider the case $a=y_s$ for $0\leq s\leq r$. If $r=0$, then $\overline{w}^{-1}(2)\leq h(\overline{w}^{-1}(1))=n$. If $s=r$, then by Lemma~\ref{lem:y}~(2) we have 
$$
\overline{w}^{-1}(y_{r}+1)< n =w^{-1}(y_{r})\leq h(w^{-1}(y_{r-1}))=h(\overline{w}^{-1}(y_{r}))\,.
$$
For $0<s<r$, if $y_s+1\in Y(w)$, then $y_s+1=y_{s+1}$ so that $\overline{w}^{-1}(y_s+1)=w^{-1}(y_s)$. From Lemma~\ref{lem:y}~(2) it follows that
$$
\overline{w}^{-1}(y_s+1)=w^{-1}(y_s) \leq h(w^{-1}(y_{s-1}))=h(\overline{w}^{-1}(y_s))\,.
$$
On the other hand, if $y_s+1\not \in Y(w)$, then we have
$$
\overline{w}^{-1}(y_s+1)=w^{-1}(y_s+1)<w^{-1}(1)<w^{-1}(y_s)\leq h(w^{-1}(y_{s-1}))=h(\overline{w}^{-1}(y_s))\,.
$$
\item Since $[\overline{w},w_0]\subseteq\{u\in [w,w_0]\mid u(n)=1\}$, it suffices to show that $\overline{w}\preceq u$ for each $u\in\Sn{n}$ with $w\preceq u$ and $u(n)=1$.
Suppose that $u[k]\!\!\uparrow< \overline{w}[k]\!\!\uparrow$ for some $k$. Since $w[m]=\overline{w}[m]$ for $m<w^{-1}(1)$ or $m=n$, it follows that $w^{-1}(1)\leq k<n$ and $\overline{w}[k]=(w[k]-\{1\})\cup\{y_s\}$ for some $s>0$. More precisely, since $w$ is well-aligned, if we denote 
$w[k]\!\!\uparrow$, $u[k]\!\!\uparrow$, and $\overline{w}[k]\!\!\uparrow$ by ordered sets $\{a_1,a_2,\dots,a_k\}$,
$\{b_1,b_2,\dots,b_k\}$, and $\{c_1,c_2,\dots,c_k\}$, respectively, then $a_i=i$ and $c_i=i+1$ for $1\leq i<y_s$, and $a_i=c_i$ for $y_s\leq i\leq k$. Note that $w\preceq u$ implies that $a_i\leq b_i$ for $1\leq i \leq k$. Hence, if $a_i\leq b_i<c_i$ for some $i$, then $i<y_s$ so that $i\leq c_i<i+1$, or equivalently, $\{c_1,c_2,\dots,c_i\}=[i]$. This contradicts that $u(n)=1$ and $k<n$. Therefore, there is no such $k$.
\item We first check the inclusion ($\subseteq$). Let $(i,n)\in E_{w,h}(\overline{w})$. Then $h(i)=n$ and $\overline{w}(i)<y_{r}$ since $\overline{w}(i,n)\succeq w$ and 
$(\overline{w}(i,n))[n-1]=(w[n-1]-\{\overline{w}(i)\})\cup\{y_{r}\}$.
Note that $i<w^{-1}(1)$ yields that $(\overline{w}(i,n))[i]=w[i-1]\cup\{1\}$ and $\overline{w}(i,n)\not\succeq w$. Hence, $i\geq w^{-1}(1)$ so that $\overline{w}(i)=y_{s+1}$, or equivalently $w(i)=y_s$,  for some $0\leq s< r$. Therefore $(i,n)=(w^{-1}(y_s),n)$ and $h(w^{-1}(y_s))=n$.

Now we check the inclusion ($\supseteq$). Assume that $w(i)=y_s$ and $h(i)=n$ for some $i<n$.  
By Lemma~\ref{lemma:Bruhat order} $\overline{w}(i,n)\succeq w$ because
$\overline{w}(i,n)$ and $w$ agree everywhere except on positions $w^{-1}(y_0), w^{-1}(y_1), \dots, w^{-1}(y_{r})$, and on these positions $\overline{w}(i,n)$ and $w$ have the subsequences $y_1\dots y_s y_0 y_{s+2} \dots y_{r} y_{s+1}$ and $y_0 \dots y_{s-1} y_s y_{s+1} \dots y_{r-1}y_{r}$, respectively.
Therefore $(w^{-1}(y_s),n)\in E_{w,h}(\overline{w})$.
\end{enumerate}
\end{proof}

For a well-aligned permutation $w$, we say that $w$ is 
\emph{bottom-packed} (respectively, \emph{right-packed}) if 
$y_i = i+1$ (respectively, $w(n-i)=y_{r-i}$) for $0\le i \le r$. 
For example, the permutations $461523$ and $426135$ are bottom-packed 
and right-packed, respectively, while $213654$ is a well-aligned 
permutation that is neither bottom-packed nor right-packed. 
In the following, we show that the map $\phi_{\overline{w}_{m-1}\overline{w}_{m}}$ is a bijection if $w\in\gen$ is a well-aligned permutation that satisfies a certain condition.

\begin{proposition}\label{prop:organized}
For a Hessenberg function $h$ on $[n]$, let $w\in\gen$ be a well-aligned permutation with $Y(w)=\{y_0,y_1,\dots,y_{r}\}$ for some $r\geq 1$. For $1\leq m \leq r$, let $\overline{w}_m=\overline{w}_{m-1}(a,b)$. If $h(p)\geq b$ for all 
$(p,a)\in E_{w,h}(\overline{w}_m)$ such that $p<w^{-1}(1)$, then
$$
\phi_{\overline{w}_{m-1}\overline{w}_{m}}\colon E_{w,h}(\overline{w}_{m-1}) \to E_{w,h}(\overline{w}_{m})
$$
is a bijection and $\phi_{\overline{w}_{m-1}\overline{w}_{m}}(i,j)=(i,j)$ except 
$\phi_{\overline{w}_{m-1}\overline{w}_{m}}(p,b)=(p,a)$ for $p<w^{-1}(1)$.

In particular, if $w\in\gen$ is bottom-packed,
then $E_{w,h}(\overline{w}_{m-1})=E_{w,h}(\overline{w}_{m})$ and $\phi_{\overline{w}_{m-1}\overline{w}_{m}}$ is an identity map.
\end{proposition}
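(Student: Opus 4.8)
The plan is to analyze the injection $\phi_{\overline{w}_{m-1}\overline{w}_{m}}$ from Lemma~\ref{lem:injection} directly, using the very explicit description of $\overline{w}_m = \overline{w}_{m-1}(a,b)$, where $(a,b) = (w^{-1}(y_{m-1}), w^{-1}(y_m))$, together with the hypothesis on the Hessenberg function. Since Lemma~\ref{lem:injection} already gives injectivity, it suffices to prove surjectivity and to identify precisely where $\phi_{\overline{w}_{m-1}\overline{w}_{m}}$ fails to be the identity. By the case analysis in the definition of $\phi_{uv}$, the map moves a transposition only in Case~(b) (when $i<a$, $j=b$, $(i,a)\notin E_{w,h}(\overline{w}_{m-1})$, sending $(i,b)\mapsto(i,a)$) or Case~(c)-type displacement at the top (when $i=a$, $j>b$, $(b,j)\notin E_{w,h}(\overline{w}_{m-1})$, sending $(a,j)\mapsto(b,j)$). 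First I would rule out the second type: here $b=w^{-1}(y_m)$ and for $j>b$ with $(a,j)\in E_{w,h}(\overline{w}_{m-1})$ we would need $j\le h(a)=h(w^{-1}(y_{m-1}))$; but the structure of $Y(w)$ and the positions $w^{-1}(y_s)$ — combined with $\overline{w}_{m-1}(a)=y_{m-1}$ being small — should force $(b,j)$ already to lie in $E_{w,h}(\overline{w}_{m-1})$, so no displacement of this kind occurs. This is where the well-organized hypothesis $w^{-1}(y_0)<\cdots<w^{-1}(y_r)=n$ and Lemma~\ref{lem:y} enter.

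Next I would pin down the first type of displacement. A transposition $(p,b)\in E_{w,h}(\overline{w}_{m-1})$ with $p<a$ gets sent to $(p,a)$ exactly when $(p,a)\notin E_{w,h}(\overline{w}_{m-1})$. Observe $\overline{w}_{m-1}(b)=w(b)=1$ (for $m-1<r$, using well-organizedness so that position $b=w^{-1}(y_m)$ still carries value $1$ before the $m$-th swap when $m\le r$; one needs to check the bookkeeping of which $y_s$ have been moved) — more carefully, $\overline{w}_{m-1}$ has value $1$ in position $w^{-1}(1)$, and the relevant transpositions $(p,b)$ with $p<w^{-1}(1)$ are the ones the hypothesis ``$h(p)\ge b$ for all $(p,a)\in E_{w,h}(\overline{w}_m)$ with $p<w^{-1}(1)$'' is tailored to. So the bulk of the argument is: for each such $(p,a)\in E_{w,h}(\overline{w}_m)$, show $(p,b)\in E_{w,h}(\overline{w}_{m-1})$ (so it is hit) and $(p,a)\notin E_{w,h}(\overline{w}_{m-1})$ (so the displacement genuinely happens), using $h(p)\ge b$ to guarantee $(p,b)$ is an admissible edge and using Lemma~\ref{lemma:Bruhat order} / Proposition~\ref{prop:Bruhat order} to check the Bruhat comparisons $\overline{w}_{m-1}(p,b)\succeq w$ and $\overline{w}_{m-1}(p,a)\not\succeq w$. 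Conversely, every element of $E_{w,h}(\overline{w}_m)$ not of the form $(p,a)$ is shown to equal $\phi_{\overline{w}_{m-1}\overline{w}_{m}}$ of a fixed (unmoved) transposition, which together with injectivity gives bijectivity.

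Finally, for the ``first kind'' special case, $y_i=i+1$ means $Y(w)=\{1,2,\dots,r+1\}$ is an interval starting at $1$, so there is no index $p<w^{-1}(1)$ with $w(p)\in\{y_0,\dots,y_{r-1}\}+1$ creating a ``gap'' transposition; more directly, one checks no transposition $(p,b)$ with $p<w^{-1}(1)=a$ lies in $E_{w,h}(\overline{w}_{m-1})$ — because $\overline{w}_{m-1}(p)>\overline{w}_{m-1}(a)$ would be needed for the edge but for the first kind everything before position $w^{-1}(1)$ is larger than $y_r=r+1\ge\overline{w}_{m-1}(a)$ forces the Bruhat comparison to fail — hence $\phi_{\overline{w}_{m-1}\overline{w}_{m}}$ fixes every transposition and $E_{w,h}(\overline{w}_{m-1})=E_{w,h}(\overline{w}_{m})$. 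The main obstacle I anticipate is the careful bookkeeping in the general case: tracking exactly which values $y_s$ have already been cycled through in passing from $w$ to $\overline{w}_{m-1}$, and verifying the two Bruhat (in)equalities $\overline{w}_{m-1}(p,b)\succeq w$ and $\overline{w}_{m-1}(p,a)\not\succeq w$ via the coordinatewise criterion of Proposition~\ref{prop:Bruhat order}(2) — this is essentially a one-coordinate shift argument analogous to the computations in the proof of Theorem~\ref{thm:irregular}, but the indices must be handled with care.
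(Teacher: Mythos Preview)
Your overall strategy is the paper's: exploit the injectivity from Lemma~\ref{lem:injection} and prove surjectivity by showing that every element of $E_{w,h}(\overline{w}_m)$ has a preimage, with the only non-trivial preimages arising for $(p,a)$ with $p<w^{-1}(1)$. However, your bookkeeping of the values of $\overline{w}_{m-1}$ is incorrect in several places, and the resulting confusion leaves a real gap.

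From $\overline{w}_m=(1,y_m)\overline{w}_{m-1}$ and $\overline{w}_m=\overline{w}_{m-1}(a,b)$ with $a=w^{-1}(y_{m-1})$, $b=w^{-1}(y_m)$, one gets $\overline{w}_{m-1}(a)=1$ and $\overline{w}_{m-1}(b)=y_m$ --- not $\overline{w}_{m-1}(a)=y_{m-1}$ and $\overline{w}_{m-1}(b)=1$ as you write. In particular the value $1$ sits at position $a$, not at $w^{-1}(1)$, so $a=w^{-1}(y_{m-1})\ge w^{-1}(1)$ with equality only when $m=1$; your identification ``$w^{-1}(1)=a$'' in the first-kind paragraph is therefore wrong for $m>1$.

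This matters because for $m>1$ there exist $(p,a)\in E_{w,h}(\overline{w}_m)$ with $w^{-1}(1)\le p<a$ (namely $p=w^{-1}(y_s)$ for $0\le s<m-1$). These are \emph{not} governed by the hypothesis ``$h(p)\ge b$ for $(p,a)\in E_{w,h}(\overline{w}_m)$ with $p<w^{-1}(1)$'', yet you must still show that $\phi$ is the identity on them --- i.e.\ that $(p,a)\in E_{w,h}(\overline{w}_{m-1})$ as well. The paper does this via Lemma~\ref{lemma:Bruhat order}, comparing the subsequences of $\overline{w}_{m-1}(p,a)$ and $w$ at the positions $w^{-1}(y_0),\ldots,w^{-1}(y_{m-1})$. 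Your sketch never isolates this intermediate range of $p$; with the values at $a$ and $b$ mis-identified, the Bruhat comparisons you propose (e.g.\ ``$\overline{w}_{m-1}(p,a)\not\succeq w$'') would be checked against the wrong permutation and would fail. Once the correct values $\overline{w}_{m-1}(a)=1$, $\overline{w}_{m-1}(b)=y_m$ are used, the case analysis you outline (the $Q$-case ruling out $(a,j)\mapsto(b,j)$, the $P$-case producing $(p,b)\mapsto(p,a)$ only for $p<w^{-1}(1)$) goes through exactly as in the paper.
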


\begin{proof}
For each $m$, the transposition $(a,b)$ in the statement refers to $(w^{-1}(y_{m-1}), w^{-1}(y_m))$, 
so it depends on $m$. This agrees with the definition of $\overline{w}_m$, as $\overline{w}_m = (1,y_m)\,\overline{w}_{m-1}$.
Since $w\in\gen$ we have $(a,b)\in 
E_{w,h}(\overline{w}_{m-1})$ by Lemma~\ref{lem:y}~(2), and thus $\phi_{\overline{w}_{m-1}\overline{w}_{m}}$ is injective by Lemma~\ref{lem:injection}. So it suffices to check the surjectivity of $\phi_{\overline{w}_{m-1}\overline{w}_{m}}$. 
Let 
$$
A \coloneqq E_{w,h}(\overline{w}_{m})- \{(p,a)\mid p<w^{-1}(1)\}\,.
$$
We show that 
$\phi_{\overline{w}_{m-1}\overline{w}_{m}}(i,j)=(i,j)$
for $(i,j)\in A$
and then find the preimage of $(p,a)$ under $\phi_{\overline{w}_{m-1}\overline{w}_{m}}$
for each
$(p,a)\in E_{w,h}(\overline{w}_{m})$
with $p<w^{-1}(1)$.
Note that since $\overline{w}_m=(1,y_m)\overline{w}_{m-1}=\overline{w}_{m-1}(a,b)$, we get $\overline{w}_{m-1}(a)=1$ and $\overline{w}_{m-1}(b)=y_m$.
 
We first show that $A\subseteq E_{w,h}(\overline{w}_{m-1})$.
Let $(i,j)\in A$. It suffices to show that $\overline{w}_{m-1}(i,j)\succeq w$. Note that if $\overline{w}_{m-1}(i)<\overline{w}_{m-1}(j)$, then $\overline{w}_{m-1}(i,j)\succ \overline{w}_{m-1} \succeq w$. Now we assume that $\overline{w}_{m-1}(i)>\overline{w}_{m-1}(j)$. 
Then $i \neq a$ and $i \neq b$ since $\overline{w}_{m-1}(a)=1$ and 
$\overline{w}_{m-1}(b)=y_m=\min\overline{w}_{m-1}[a+1,n]$, respectively.
Note that if $i<b$ and $j>a$, then
$$
(\overline{w}_{m}(i,j))[k]=(w[k]-\{1,\overline{w}_{m}(i)\})\cup\{y_m,w(j)\}
$$
for $\max\{a,i\}\leq k <\min\{b,j\}$.
It follows from $\overline{w}_{m}(i,j)\succeq w$ that $\overline{w}_{m}(i)<w(j)$. However, this contradicts $w(i)>w(j)$ since $w(i) \leq \overline{w}_{m}(i)$.  
Hence 
it remains the case when $i>b$ or $j\leq a$.
For the remaining $(i,j)$, we split them into two cases as follows.

\begin{itemize}
\item 
Let $i>b$ or $j<a$. Then $\overline{w}_{m-1}(i,j)\succeq w$ since
$$
(\overline{w}_{m-1}(i,j))[k]=\begin{cases}
w[k] & \mbox{if $a\leq k<b$},\\
(\overline{w}_{m}(i,j))[k] & \mbox{otherwise}.
\end{cases}
$$
\item
Let $j=a$ and $i\geq w^{-1}(1)$. If $w(i)>y_m$, then
\[
(\overline{w}_{m}(i,a))[i]=(w[i-1]-\{1\})\cup\{y,y_m\} 
\]
for some $y<y_m$, which contradicts  $\overline{w}_{m}(i,a)\succeq w$. Therefore, $w(i)<y_m$ so that $w(i)=y_s$ for some $s<m-1$. 
By Lemma~\ref{lemma:Bruhat order} $\overline{w}_{m-1}(i,a)\succeq w$ since
$\overline{w}_{m-1}(i,a)$ and $w$ agree everywhere except on positions $w^{-1}(y_0), w^{-1}(y_1), \dots, w^{-1}(y_{m-1})$, and on these positions $\overline{w}_{m-1}(i,a)$ and $w$ have the subsequences $y_1\dots y_s y_0 y_{s+2} \dots y_{m-1} y_{s+1}$ and $y_0 \dots y_{s-1} y_s y_{s+1} \dots y_{m-2}y_{m-1}$, respectively.
\end{itemize}
Therefore, $A\subseteq E_{w,h}(\overline{w}_{m-1})$.

Now consider the sets $P\coloneqq \{(p,j)\mid p<a,~j\in\{a,b\}\}$ and $Q\coloneqq \{(i,q)\mid i\in\{a,b\},~q>b\}$.

\begin{itemize}
\item Suppose that $(i,j)\in A-(P\cup Q)$. It follows from the definition of the map that $\phi_{\overline{w}_{m-1}\overline{w}_{m}}$ fixes all $(i,j)$.
\item Suppose that $(i,q)\in Q$. Note that $\overline{w}_{m}(i,q)\succeq w$
since $\overline{w}_{m}(a)=y_m<\overline{w}_{m}(q)$ and $\overline{w}_{m}(b)=1$.
This yields that if $h(a)\geq q$, then $(a,q),(b,q)\in A$; if $h(a)< q \leq h(b)$, then $(a,q)\not \in A$, $(b,q) \in A$, and $(a,q)\not \in E_{w,h}(\overline{w}_{m-1})$. In any case, we have $\phi_{\overline{w}_{m-1}\overline{w}_{m}}(i,q)=(i,q)$ for $(i,q)\in A\cap Q$.
\item Suppose that $(p,j)\in P$. Note that if $p<w^{-1}(1)$, then $\overline{w}_{m}(p,b)\not \succeq w$ since
$(\overline{w}_m(p,b))(p)=1$. Thus if $(p,b)\in A$, then $w^{-1}(1)\leq p<a$ and $(\overline{w}_m(p,b))(b)=\overline{w}_m(p)\leq w(b)=y_m$ so that $\overline{w}_m(p)=y_s$ for some $s<m$ since $w$ is well-aligned. This yields that $(p,a), (p,b)\in A$, and $\phi_{\overline{w}_{m-1}\overline{w}_{m}}(p,j)=(p,j)$ for $(p,j)\in A\cap P$. 

\end{itemize}
This shows that
$\phi_{\overline{w}_{m-1}\overline{w}_{m}}(i,j)=(i,j)$
for all $(i,j)\in A$.

Now we assume that $(p,a) \in E_{w,h}(\overline{w}_m)$ and $p<w^{-1}(1)$ and find the preimage of $(p,a)$ under $\phi_{\overline{w}_{m-1}\overline{w}_{m}}$.
By the definition of $\phi_{\overline{w}_{m-1}\overline{w}_{m}}$,
if $\phi_{\overline{w}_{m-1}\overline{w}_{m}}(\tau)=(p,a)$ for some $\tau$, then $\tau \in \{(p,a), (p,b)\}$. 
Note that $(p,a)\in E_{w,h}(\overline{w}_m)$ implies that $w(p)<y_m$,
and $h(p)\geq b$ from the assumption. Since $\overline{w}_{m-1}(a)=1$ and $\overline{w}_{m-1}(b)=y_m$, we have $\overline{w}_{m-1}(p,a)\not\succeq w$ and $\overline{w}_{m-1}(p,b)\succeq w$, respectively. Therefore, $(p,a)\not\in E_{w,h}(\overline{w}_{m-1})$ and $(p,b)\in E_{w,h}(\overline{w}_{m-1})$, so 
$\phi_{\overline{w}_{m-1}\overline{w}_{m}}(p,b)=(p,a)$
when $p<w^{-1}(1)$. Hence the map $\phi_{\overline{w}_{m-1}\overline{w}_{m}}$ is bijective.

Note that if $w\in\gen$ is bottom-packed, then $w(p)>y_m$ for all $p<w^{-1}(1)$. Hence, $E_{w,h}(\overline{w}_{m})=A$ and the map $\phi_{\overline{w}_{m-1}\overline{w}_{m}}$ is an identity map.
This proves the proposition.
\end{proof}

\subsection{Regularity of \texorpdfstring{$\Gamma_{w,h}$}{Gwh} for pattern avoiding permutations in \texorpdfstring{$\gen$}{gen}}\label{sec:regularproof}

In this subsection, 
we first investigate the relationship between the properties of well-aligned permutations and those of pattern avoiding permutations in $\gen$.
We then show that the regularity of $\Gamma_{w,h}$, for $w\in \gen$, can be characterized by the pattern avoidance of $w$.

For simplicity, we assume that $Y(w)=\{y_0,y_1,\dots,y_r\}$ in this subsection unless otherwise stated.

To establish the regularity of $\Gamma_{w,h}$, we begin by examining how well-aligned permutations relate to pattern avoidance in $\gen$.

\begin{lemma}\label{lem:1324}
    Let $w$ be a permutation in $\gen$ for a Hessenberg function~$h$. If $w$ avoids the associated pattern $\hpat{1324}$, then $w$ is well-aligned. 
\end{lemma}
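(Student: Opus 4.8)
The plan is to argue by contrapositive: assume $w$ is not well-organized and produce an occurrence of the associated pattern $\hpat{1324}$. Recall $Y(w) = \{y_0, y_1, \dots, y_r\}$ with $1 = y_0 < y_1 < \cdots < y_r = w(n)$, where $Y(w)$ collects the values $w(i)$ with $i \geq w^{-1}(1)$ and $w(i) \leq w(n)$. Being well-organized means $w^{-1}(y_0) < w^{-1}(y_1) < \cdots < w^{-1}(y_r) = n$. So if $w$ fails to be well-organized, there is some index $s$ with $w^{-1}(y_{s-1}) > w^{-1}(y_s)$; pick the smallest such $s$, so that the positions of $y_0, y_1, \dots, y_{s-1}$ are increasing but $y_s$ sits to the left of $y_{s-1}$.

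First I would name the relevant positions: let $i = w^{-1}(1) = w^{-1}(y_0)$, let $k = w^{-1}(y_s)$, and let $\ell = w^{-1}(y_{s-1})$, so that $i < k < \ell$ by choice of $s$ (note $i \le k$ since $y_s \in Y(w)$ forces $w^{-1}(y_s) \ge w^{-1}(1)$, and $k \ne i$ since $y_s \ne 1$; also $k < \ell$ is exactly the failure of well-organizedness, and $\ell \le n$). The values satisfy $w(i) = 1 < w(k) = y_s$, and $w(k) = y_s < y_{s-1}$ would be false — wait, $y_s > y_{s-1}$, so actually $w(\ell) = y_{s-1} < y_s = w(k)$. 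So on positions $i < k < \ell$ we see values $1, y_s, y_{s-1}$ with $1 < y_{s-1} < y_s$, which is the pattern $132$. To upgrade this to $\hpat{1324}$ I need a fourth position $j$ with $i < j < k$ and $w(i) < w(k) < w(j) < w(\ell)$ together with the Hessenberg constraints $j < k < \ell \le h(j)$ and $k \le h(i)$. The natural candidate is to find $j$ strictly between $i$ and $k$ carrying a large value; here is where the generator hypothesis enters. Since $y_{s-1}$ and $y_s$ are consecutive elements of $Y(w)$ but $y_{s-1}$ sits to the right of $y_s$, every value strictly between $y_{s-1}$ and $y_s$ either lies outside $Y(w)$ or — by definition of $Y(w)$ — exceeds $w(n) = y_r$; in either case such a value $w(j)$ must occur at a position $j < i = w^{-1}(1)$, which is too far left. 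So instead I should look for $j$ in $(i,k)$ with $w(j) > w(\ell) = y_{s-1}$. Between positions $i$ and $k$ the permutation takes the value $1$ at $i$ and the value $y_s$ at $k$; by Lemma~\ref{lem:y} applied to the generator $w$, $w^{-1}(y_s) \le h(w^{-1}(y_{s-1}))$, i.e. $k \le h(\ell)$, which is not directly what I want but is the kind of inequality the Hessenberg constraints in Definition~\ref{def:pattern4}(2) will be extracted from after relabelling.

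The main obstacle — and the step I expect to require the most care — is producing the middle index $j$ with a value in the correct band and simultaneously verifying all four Hessenberg inequalities $i < j < k < \ell \le h(j)$ and $k \le h(i)$ in Definition~\ref{def:pattern4}(2). The cleanest route is probably not to take $i = w^{-1}(1)$ directly but to choose $j$ to be the position of some value just below $y_s$ that, by the generator property $w^{-1}(a+1) \le h(w^{-1}(a))$, is forced to lie close to (and to the left of) $k = w^{-1}(y_s)$; then redefine the four indices of the pattern to be (this new $j$), $k$, $\ell$, and a suitable earlier position playing the role of "$i$" in the pattern. I would iterate the generator inequality $w^{-1}(a+1) \le h(w^{-1}(a))$ along the run of values $y_{s-1}, y_{s-1}+1, \dots, y_s$ to chain together the Hessenberg bounds, exactly as in the proof of Lemma~\ref{lem:y}, and then check the value inequalities $w(i) < w(k) < w(j) < w(\ell)$ hold by the choice of $j$ as the nearest-left large value and by $y_{s-1} < y_s$. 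Once the four indices and their Hessenberg relations are pinned down, the value inequalities are immediate, and the contrapositive is complete.
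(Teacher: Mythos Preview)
Your proposal has a genuine gap: you have misidentified where the fourth index of the pattern must lie. With your labels $i<k<\ell$ carrying values $1,\;y_s,\;y_{s-1}$ (so relative order $1,3,2$), completing to a $1324$ pattern requires a position \emph{to the right of} $\ell$ carrying a value \emph{larger than} $y_s$; that is, your triple $(i,k,\ell)$ must become the pattern's $(i,j,k)$, and you need a new rightmost position to play the pattern's $\ell$. Instead you repeatedly search for a position between your $i$ and $k$. The inequality you write down, $w(i)<w(k)<w(j)<w(\ell)$, is literally impossible with your own labels, since $w(k)=y_s>y_{s-1}=w(\ell)$. The later attempts (``look for $j$ in $(i,k)$ with $w(j)>y_{s-1}$'', ``redefine the four indices to be (this new $j$), $k$, $\ell$, and a suitable earlier position'') stay on this wrong side and can only produce patterns $1432$ or $1342$, never $1324$.

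The paper's proof fixes both this and the Hessenberg constraints in one stroke. Rather than anchoring at $w^{-1}(1)$, it chooses consecutive $y_i,y_{i+1}$ and some $y_j$ with $i+1<j$ so that $w^{-1}(y_i)<w^{-1}(y_j)<w^{-1}(y_{i+1})$ (this is what ``not well-organized'' really buys, using that $w^{-1}(y_0)$ and $w^{-1}(y_r)=n$ are the extreme positions). It then takes the \emph{smallest} $k>j$ with $w^{-1}(y_k)>w^{-1}(y_{i+1})$; such $k$ exists because $w^{-1}(y_r)=n$. The four positions $w^{-1}(y_i)<w^{-1}(y_j)<w^{-1}(y_{i+1})<w^{-1}(y_k)$ carry values $y_i<y_{i+1}<y_j<y_k$, which is exactly $1324$. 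The Hessenberg bounds then come for free from Lemma~\ref{lem:y} applied to \emph{consecutive} $y$'s (either $y_{k-1},y_k$ and $y_i,y_{i+1}$, with one case split on whether $w^{-1}(y_{k-1})$ lies left or right of $w^{-1}(y_i)$), together with monotonicity of $h$. Your plan to ``iterate the generator inequality along the run $y_{s-1},y_{s-1}+1,\dots,y_s$'' would not produce these bounds: the intermediate values $a$ with $y_{s-1}<a<y_s$ are not in $Y(w)$ and hence sit at positions $<w^{-1}(1)$, so chaining $w^{-1}(a+1)\le h(w^{-1}(a))$ through them does not control the positions you need.
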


\begin{proof}
    Suppose that $w$ is not well-aligned so that we have 
$w^{-1}(y_{i})<w^{-1}(y_j)<w^{-1}(y_{i+1})$ for some $i$ and $j$ with $0<i+1<j<r$.  
Let $k$ be the smallest integer such that $k>j$ and $w^{-1}(y_{i+1})<w^{-1}(y_k)$.
Note that if $w^{-1}(y_{k-1})<w^{-1}(y_i)$, then the subsequence $y_i y_j y_{i+1} y_k$ gives the associated pattern $\hpat{1324}$ since $w^{-1}(y_{k})\leq h(w^{-1}(y_{k-1}))\leq h(w^{-1}(y_i))$ by Lemma~\ref{lem:y}~(2). On the other hand, if $w^{-1}(y_i)<w^{-1}(y_{k-1})$, then the subsequence $y_i y_{k-1} y_{i+1} y_k$ gives the associated pattern $\hpat{1324}$ since $w^{-1}(y_{i+1})\leq h(w^{-1}(y_{i}))$ and $w^{-1}(y_{k})\leq h(w^{-1}(y_{k-1}))$ by Lemma~\ref{lem:y}~(2). Hence, $w$ always contains the associated pattern $\hpat{1324}$.
\end{proof}

Note that for $w\in\gen$, avoiding the associated pattern $\hpat{1324}$ is not a necessary condition to be well-aligned. For instance, when $h=(7,7,7,7,7,7,7)$, the permutation $4651273$ is well-aligned but contains the pattern $1324$.

If $w\in\gen$ is well-aligned and avoids certain patterns in Definition~\ref{def:pattern4}, then it is bottom-packed or right-packed.  
Next, we refine the structure of well-aligned permutations by introducing additional pattern avoidance conditions, which enables us to identify when a permutation is bottom-packed or right-packed—a crucial step toward computing the edges of $\Gamma_{w,h}$.

\begin{lemma}\label{lem:bottom-right-packed}
    For a Hessenberg function~$h$, assume that $w\in\gen$ is well-aligned.
    \begin{enumerate}
        \item If $h(w^{-1}(1))<n$ and $w$ avoids the associated pattern $\hpat{2134}$, then $w$ is bottom-packed. 
        In addition, if $w$ also avoids $\hpat{1243}$ and $\hpat{1423}$, then $w(i)\in Y(w)$ for all $i$ such that $h(i)=n$ .
        \item If $h(w^{-1}(1))=n$ and $w$ avoids the associated patterns $\hpat{2143}$ and $\hpat{2134}$, then $w$ is bottom-packed or right-packed.
    \end{enumerate}
\end{lemma}

\begin{proof}
\begin{enumerate}
\item 
First note that $y_{r}>2$ and $y_{r}-1\in Y(w)$, otherwise we have $h(w^{-1}(1))=n$.
Now suppose that $\{x+1,x+2,\dots,y_{r}\}\subset Y(w)$ and $x \not \in Y(w)$ for some $1<x<y_{r}$.
Since $w^{-1}(x+1)\leq h(w^{-1}(x))$ and $h(w^{-1}(1))<n$, we have $w^{-1}(x+j)\leq h(w^{-1}(x))<w^{-1}(x+j+1)$ for some $1\leq j<y_{r}-x$. Then the subsequence $x1(x+j)(x+j+1)$ gives the associated pattern $\hpat{2134}$ since $w^{-1}(x+j+1)\leq h(w^{-1}(x+j))$. Hence, there is no such $x$, and $w$ is bottom-packed. 

For the latter part, suppose that there is  $i$ satisfying that $h(i)=n$ and $w(i)>r+1=w(n)$. Let $x$ be the smallest integer satisfying that $1\leq x <r$ and $h(w^{-1}(x+1))=n$. Note that if $w^{-1}(x+1)<i$, then the subsequence $x (x+1) w(i) (r+1)$ gives the associated pattern $\hpat{1243}$. On the other hand, if $w^{-1}(x+1)>i$, then the subsequence $x\, w(i)\, (x+1)\,  (r+1)$ gives the associated pattern $\hpat{1423}$. Therefore there is no such $i$.
\item 
Suppose that $w$ is not bottom-packed so that $y_{s}-1\not \in Y(w)$ for some $1\leq s \leq r$. 
Now assume that there is $x$ satisfying that $x>w(n)$ and $w^{-1}(1)<w^{-1}(x)$. If $w^{-1}(x)<w^{-1}(y_s)$, then the subsequence $(y_s-1) 1 x y_s$ gives the associated pattern $\hpat{2143}$ since $w^{-1}(y_{s})\leq h(w^{-1}(y_{s}-1))$. Hence, $w^{-1}(y_s)<w^{-1}(x)$ 
and $s<r$. This yields that $h(w^{-1}(y_{s}-1))=n$, otherwise the subsequence $(y_s-1) 1 y_s y_r$ gives the associated pattern $\hpat{2134}$ since $h(w^{-1}(1))=n$. Then the subsequence $(y_s-1) 1 x y_r$ gives the associated pattern $\hpat{2143}$, which is a contradiction.
Therefore there is no such $x$, and $w$ is right-packed. 
\end{enumerate}
This completes the proof.
\end{proof}

With the bottom-packed or right-packed structure established, we can now compute the sizes of the sets 
$E_{w,h}(\overline{w})$ and $E_{w,h}(w_0)$, 
which will play a key role in verifying the regularity of $\Gamma_{w,h}$.

\begin{proposition}\label{prop:size of Ewh}
    For a Hessenberg function~$h$, assume that $w\in\gen$ avoids the three associated patterns $\hpat{2143}$, $\hpat{1324}$, and $\hpat{2134}$. Then the following hold.
    \begin{enumerate}
        \item If $w$ also avoids the associated pattern $\hpat{2314}$, then $|E_{w,h}(w)|=|E_{w,h}(\overline{w})|$.
\item If $w$ also avoids the two associated patterns $\hpat{1243}$  and $\hpat{1423}$, then 
$$
\{(i,n)\in E_{w,h}(w_0) \}= \{(i,n) \mid n-k+1 \leq i<n\},
$$
where $k=|\{y_i \mid h(w^{-1}(y_i))=n\}|$.
    \end{enumerate}
\end{proposition}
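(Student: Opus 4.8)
Since $w$ avoids $\hpat{1324}$, Lemma~\ref{lem:1324} shows $w$ is well-organized, so I may write $Y(w)=\{y_0,\dots,y_r\}$ and form the chain $w=\overline{w}_0\prec\overline{w}_1\prec\cdots\prec\overline{w}_r=\overline{w}$ of Section~\ref{sec:organized}; if $r=0$ then $\overline{w}=w$ and (1) is trivial, so assume $r\geq 1$. By Lemma~\ref{lem:injection} the composite $E_{w,h}(w)\to E_{w,h}(\overline{w}_1)\to\cdots\to E_{w,h}(\overline{w})$ of the maps $\phi_{\overline{w}_{m-1}\overline{w}_m}$ is injective, so in view of Theorem~\ref{thm:increasing} part (1) reduces to showing each $\phi_{\overline{w}_{m-1}\overline{w}_m}$ is surjective; by Proposition~\ref{prop:organized}, for the step $\overline{w}_m=\overline{w}_{m-1}(a,b)$ it is enough to check that every $(p,a)\in E_{w,h}(\overline{w}_m)$ with $p<w^{-1}(1)$ has $h(p)\geq b$. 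I would split on the value of $h(w^{-1}(1))$: if $h(w^{-1}(1))<n$, then $w$ is of the first kind by Lemma~\ref{lem:first-second-kind}(1), and the last sentence of Proposition~\ref{prop:organized} already gives $E_{w,h}(\overline{w}_{m-1})=E_{w,h}(\overline{w}_m)$; if $h(w^{-1}(1))=n$, then $w$ is of the first or second kind by Lemma~\ref{lem:first-second-kind}(2), the first-kind case being handled as before, so the only genuinely new situation is that $w$ is of the second kind.

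In that situation $w^{-1}(y_s)=n-r+s$ for all $s$, $w^{-1}(1)=n-r$, $h(i)=n$ for all $i\geq n-r$, $a=n-r+m-1$, $b=a+1$, and $\overline{w}_m(a)=y_m$, $\overline{w}_m(b)=1$, $\overline{w}_m(p)=w(p)$ for $p<n-r$. Suppose for contradiction that $(p,a)\in E_{w,h}(\overline{w}_m)$ with $p<n-r$ and $h(p)<b$; then $a\leq h(p)<b=a+1$ forces $h(p)=a$. Comparing the sets $\{w(1),\dots,w(p-1)\}\cup\{y_m\}$ and $\{w(1),\dots,w(p-1)\}\cup\{w(p)\}$ in $\overline{w}_m(p,a)\succeq w$ forces $y_m\geq w(p)$, and since positions below $w^{-1}(1)$ carry values outside $Y(w)$, this yields $w(p)<y_m$ and $w(p)\neq y_{m-1}$. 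If $w(p)<y_{m-1}$ (which forces $m\geq 2$, as $y_0=1<w(p)$), the positions $p<n-r<a<b$ carry $w(p),1,y_{m-1},y_m$ with $1<w(p)<y_{m-1}<y_m$, so with $(i,j,k,\ell)=(p,n-r,a,b)$ and using $b\leq h(a)$ (Lemma~\ref{lem:y}), $a\leq h(p)$, and $h(p)<b$, this is an occurrence of $\hpat{2134}$, a contradiction. Hence $y_{m-1}<w(p)<y_m$. Put $q\coloneqq w^{-1}(y_m-1)$; then $y_m-1\notin Y(w)$, so $q<n-r$, and since $w$ is a generator $b=w^{-1}(y_m)\leq h(w^{-1}(y_m-1))=h(q)$. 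Now $q=p$ contradicts $h(p)=a<b$; $q<p$ contradicts $h(q)\leq h(p)=a<b$ since $h$ is nondecreasing; and if $q>p$ the positions $p<q<n-r<b$ carry $w(p),y_m-1,1,y_m$ with $1<w(p)<y_m-1<y_m$, so with $(i,j,k,\ell)=(p,q,n-r,b)$ and using $b\leq h(q)$, $n-r\leq h(p)$, and $h(p)<b$, this is an occurrence of $\hpat{2314}$, again a contradiction. Thus no such $(p,a)$ exists, so each $\phi_{\overline{w}_{m-1}\overline{w}_m}$ is a bijection by Proposition~\ref{prop:organized} and $|E_{w,h}(w)|=|E_{w,h}(\overline{w})|$, proving (1).

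For (2) I would first determine $\{(i,n)\in E_{w,h}(w_0)\}$ directly. Since $w_0(i,n)$ is $w_0$ with the values in positions $i$ and $n$ exchanged, the inequality $(w_0(i,n))[k]\!\!\uparrow\geq w[k]\!\!\uparrow$ is automatic for $k<i$, while for $i\leq k<n$ it becomes, after passing to complements, equivalent to $w[k+1,n]\!\!\uparrow\geq(2,3,\dots,n-k,n-i+1)$; running $k$ through $[i,n-1]$ and using Proposition~\ref{prop:Bruhat order}, this gives
\[
(i,n)\in E_{w,h}(w_0)\iff h(i)=n\ \text{ and }\ \max\{w^{-1}(1),\,n-w(n)+1\}\leq i<n .
\]
Because $w$ is well-organized, $w^{-1}(1)=w^{-1}(y_0)\leq n-r$ and $w(n)=y_r\geq r+1$, so $M\coloneqq\max\{w^{-1}(1),n-w(n)+1\}\leq n-r$; writing $m_0\coloneqq\min\{i:h(i)=n\}$, the displayed set is the interval $[\max\{m_0,M\},n-1]$, and it remains to identify $\max\{m_0,M\}$ with $n-k+1$. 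This is where the remaining pattern hypotheses enter. If $h(w^{-1}(1))<n$, then $w$ is of the first kind and, by the second part of Lemma~\ref{lem:first-second-kind}(1) (which uses avoidance of $\hpat{1243}$ and $\hpat{1423}$), every position $i$ with $h(i)=n$ carries a value of $Y(w)$; since there are $n-m_0+1$ such positions and $|Y(w)|=r+1$, one gets $m_0\geq n-r$, the positions $w^{-1}(y_j)$ lying in $[m_0,n]$ are exactly all of $[m_0,n]$, so $k=n-m_0+1$, while $M=n-r\leq m_0$, giving $\max\{m_0,M\}=m_0=n-k+1$. If $h(w^{-1}(1))=n$, then $w$ is of the first or second kind and in either case $w^{-1}(y_j)\geq m_0$ for all $j$ (for the first kind since $m_0\leq w^{-1}(1)=w^{-1}(y_0)$, for the second since $w^{-1}(y_j)=n-r+j\geq n-r\geq m_0$), so $k=r+1$ and $M=n-r=n-k+1\geq m_0$. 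In both situations $\max\{m_0,M\}=n-k+1$, proving (2).

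I expect the main obstacle to be the pattern extraction in the second-kind case of (1): the four positions must be chosen so that the constraints on the values of $h$ match \emph{exactly} the defining inequalities of $\hpat{2134}$ and $\hpat{2314}$, and it is the monotonicity of $h$ — not a length-four pattern — that disposes of the subcase $q<p$. The computation of $\{(i,n)\in E_{w,h}(w_0)\}$ in (2) and the bookkeeping relating $k$, $m_0$, $r$, and the kind of $w$ are routine once the normal form coming from Lemmas~\ref{lem:1324} and~\ref{lem:first-second-kind} is in hand.
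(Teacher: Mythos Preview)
Your argument is correct and follows essentially the same strategy as the paper. Both proofs reduce (1) to the second-kind case via Lemmas~\ref{lem:1324} and~\ref{lem:first-second-kind} and then use Proposition~\ref{prop:organized} to locate a hypothetical bad index $p$; the paper exhibits the $\hpat{2314}$ occurrence at positions $(p,\,w^{-1}(y_m-1),\,a,\,b)$ with values $w(p),\,y_m{-}1,\,y_{m-1},\,y_m$, while you use $(p,\,w^{-1}(y_m-1),\,w^{-1}(1),\,b)$ with values $w(p),\,y_m{-}1,\,1,\,y_m$ --- an equally valid choice exploiting the second-kind structure. For (2), the paper invokes the descent-set criterion (Proposition~\ref{prop:Bruhat order}(3)) to reduce the Bruhat comparison $w\preceq w_0(i,n)$ to the two indices $k\in\{i,n-1\}$, whereas your complement computation checks all $k\in[i,n-1]$ directly; since $1\notin w[k+1,n]$ once $w^{-1}(1)\leq i$, the first $n-k-1$ coordinates of $w[k+1,n]\!\!\uparrow$ automatically dominate $2,\dots,n-k$ and only the top coordinate needs $w(n)\geq n-i+1$, so your route is just as short. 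The final identification of $\max\{m_0,M\}$ with $n-k+1$ is the same case split as the paper's.
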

\begin{proof}
    From the assumption, $w$ is a well-aligned permutation in $\gen$ by Lemma~\ref{lem:1324}.
    \begin{enumerate}
        \item Note that from Proposition~\ref{prop:organized} if $w$ is bottom-packed, then we have 
$$
E_{w,h}(w)=E_{w,h}(\overline{w}_0)=E_{w,h}(\overline{w}_1)=\cdots= E_{w,h}(\overline{w}_{r})=E_{w,h}(\overline{w})\,.
$$ 
Hence we only need to consider well-aligned permutations $w$ which are not bottom-packed. Let $Y(w)\neq [r+1]$. It follows from Lemma~\ref{lem:bottom-right-packed} that $h(w^{-1}(1))=n$. 
Suppose that for some $1\leq m\leq r$, the injection $\phi_{\overline{w}_{m-1}\overline{w}_{m}}\colon E_{w,h}(\overline{w}_{m-1}) \to E_{w,h}(\overline{w}_{m})$ is not surjective. Let $\overline{w}_m=\overline{w}_{m-1}(a,b)$, where $w(a)=y_{m-1}$ and $w(b)=y_m$. Then there exists $p$ such that $p<w^{-1}(1)$, $(p,a)\in E_{w,h}(\overline{w}_m)$, and $a\leq h(p)<b$ by Proposition~\ref{prop:organized}. Note that $y_m=\overline{w}_m(a)=(\overline{w}_m(p,a))(p)$. Since $(p,a)\in E_{w,h}(\overline{w}_m)$, i.e., $\overline{w}_m(p,a)\succeq w$, and $w[p-1]=\overline{w}_m[p-1]$ we have $y_m>w(p)$.
If $w(p)<y_{m-1}$, then the subsequence $w(p)1y_{m-1}y_{m}$ gives the associated pattern $\hpat{2134}$. Hence $y_{m-1}<w(p)<y_{m}$ and $y_{m-1}\neq y_m-1$. Note that $p<w^{-1}(y_m-1)<w^{-1}(1)$ since $h(p)<w^{-1}(y_m)\leq h(w^{-1}(y_m-1))$. Then the subsequence $w(p)(y_m-1)y_{m-1}y_{m}$ gives the associated pattern $\hpat{2314}$. Thus there is no such $p$ for all $m=1,2,\dots,r$ so that 
$$
|E_{w,h}(w)|=|E_{w,h}(\overline{w}_0)|=|E_{w,h}(\overline{w}_1)|=\cdots= |E_{w,h}(\overline{w}_{r})|=|E_{w,h}(\overline{w})|\,.
$$  
\item Let $|\{y_i \mid h(w^{-1}(y_i))=n\}|=k$ and $|\{i \mid h(i)=n\}|=m$. Then $1\leq k \leq m$.
For $n-m+1\leq i<n$, note that $(w_0(i,n))[i]\!\!\uparrow \geq w[i]\!\!\uparrow$ if and only if $w^{-1}(1)\leq i$ since 
\begin{align*}
w[i]&=[n]-\{w(i+1),w(i+2),\dots,w(n)\},\\
(w_0(i,n))[i]&=[n]-\{2,3,\dots,n-i+1\}\,.
\end{align*}
In addition, $(w_0(i,n))[n-1]\!\!\uparrow \geq w[n-1]\!\!\uparrow$ if and only if $n-i+1\leq w(n)$.
From Proposition~\ref{prop:Bruhat order} (3), it follows that $w_0(i,n)\succeq w$ if and only if 
$(w_0(i,n))[i]\!\!\uparrow \geq w[i]\!\!\uparrow$ and $(w_0(i,n))[n-1]\!\!\uparrow \geq w[n-1]\!\!\uparrow$ since
$\{i,n-1\}=[n-1]-D(w_0(i,n))$. Hence 
$$
\{(i,n)\in E_{w,h}(w_0) \}= \{(i,n) \mid \max\{w^{-1}(1), n+1-w(n), n-m+1\} \leq i<n\}\,.
$$ 
If $h(w^{-1}(1))<n$, then $w^{-1}(1)<n-m+1$, $w(n)=r+1$, and $m=k\leq r$ by Lemma~\ref{lem:bottom-right-packed} (1). On the other hand, if $h(w^{-1}(1))=n$, then from Lemma~\ref{lem:bottom-right-packed} (2) it follows that $k=r+1$, $w^{-1}(1)=n-r=n-k+1$, and $w(n)>r$. 
In any case, we have 
$\max\{w^{-1}(1), n+1-w(n), n-m+1\}=n-k+1$, 
as we desired.
    \end{enumerate}
    This completes the proof.
\end{proof}

To proceed with the induction, we verify that $\overline{w}$
 inherits the pattern avoidance properties of $w$ in $\gen$.

\begin{proposition}\label{prop:chain_w}
Let $w$ be a permutation in $\gen$ for a Hessenberg function~$h$. If $w$ avoids all of the associated patterns $\hpat{2143}$, $\hpat{1324}$, $\hpat{1243}$, $\hpat{2134}$, $\hpat{1423}$, $\hpat{2314}$, and $\hpat{2413}$, then so does $\overline{w}$.
\end{proposition}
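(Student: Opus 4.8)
The plan is to first pin down the shapes of $w$ and of $\overline w$, remove the position $n$ from consideration, and then treat the two possible shapes of $w$ separately. Since $w$ avoids $\hpat{1324}$, Lemma~\ref{lem:1324} gives that $w$ is well-organized, and then Lemma~\ref{lem:first-second-kind} (using that $w$ also avoids $\hpat{2143}$, $\hpat{1243}$, $\hpat{2134}$, and $\hpat{1423}$) gives that $w$ is a well-organized generator of the first or the second kind. From the construction $\overline w_m=(1,y_m)\overline w_{m-1}$ one sees that $\overline w$ agrees with $w$ off the positions $w^{-1}(y_0),\dots,w^{-1}(y_r)$, while along the increasing list $w^{-1}(y_0)<\dots<w^{-1}(y_r)=n$ the permutation $w$ takes the values $y_0,y_1,\dots,y_r$ in this order and $\overline w$ takes them in the order $y_1,y_2,\dots,y_r,y_0$; in particular $\overline w(n)=y_0=1$, and at each position $w^{-1}(y_m)$ with $m<r$ the entry rises from $y_m$ in $w$ to $y_{m+1}$ in $\overline w$. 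The first step is to observe that no occurrence in $\overline w$ of any of the seven associated patterns can use the position $n$: in each of the seven patterns the value at the last position $\ell$ strictly exceeds the minimum of the four chosen values, so $\ell=n$ is impossible because $\overline w(n)=1$ is the global minimum of $\overline w$. Hence any such occurrence lives entirely in $[n-1]$, where $\overline w$ differs from $w$ only at $w^{-1}(y_0),\dots,w^{-1}(y_{r-1})$.

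If $w$ is of the first kind, so $Y(w)=\{1,2,\dots,r+1\}$, I would introduce the strictly increasing map $\psi\colon\{2,\dots,n\}\to[n]\setminus\{r+1\}$ given by $\psi(v)=v-1$ for $v\le r+1$ and $\psi(v)=v$ for $v\ge r+2$, and check on the special positions $w^{-1}(y_0),\dots,w^{-1}(y_{r-1})$ and on the remaining positions that $\psi\circ(\overline w|_{[n-1]})=w|_{[n-1]}$. Thus $\overline w$ and $w$, restricted to the same index set $[n-1]$, are order-isomorphic sequences. Since each associated pattern constrains a chosen quadruple of positions only through a relative-order condition on the corresponding values together with conditions depending on those positions and on $h$ alone, $\overline w$ contains $\hpat P$ at $i<j<k<\ell\le n-1$ if and only if $w$ does; by hypothesis $w$ avoids all seven, hence so does $\overline w$.

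If $w$ is of the second kind, then $h(w^{-1}(1))=n$ by Lemma~\ref{lem:first-second-kind}, the special positions form the terminal block $n-r,\dots,n$ of $[n]$, and $h\equiv n$ on $[n-r,n]$. Here deleting the value $1$ from $w$ (at position $n-r$) and from $\overline w$ (at position $n$) produces the same word, so $\overline w(p)=w(p)$ for $p\le n-r-1$ and $\overline w(p)=w(p+1)$ for $n-r\le p\le n-1$. Given an occurrence of $\hpat P$ in $\overline w$ at $i<j<k<\ell\le n-1$, I would set $p^{*}=p$ for $p\le n-r-1$ and $p^{*}=p+1$ otherwise, so that $i^{*}<j^{*}<k^{*}<\ell^{*}\le n$ and $w(p^{*})=\overline w(p)$, giving $w$ the classical pattern $P$ at $(i^{*},j^{*},k^{*},\ell^{*})$. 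It then remains to recover the $h$-conditions for one of the seven associated patterns in $w$. For a shifted position $p^{*}=p+1$ one has $h(p^{*})=n$ (since $p^{*}\ge n-r$), and every condition of the form ``$h(b)<a$'' already forces $b<n-r$ in the original occurrence; so the only conditions that can break under the shift are those of the form ``$\ell\le h(x)$'' with $x\in\{i,j,k\}$, $x<n-r$ but $\ell\ge n-r$, and even then only when the inequality is tight, $h(x)=\ell=w^{-1}(y_t)$. In that residual situation I would use $w^{-1}(y_{t+1})\le h(w^{-1}(y_t))$ from Lemma~\ref{lem:y} and the generator property of $w$ to control $h(x)$: when $w(x)\notin(y_t,y_{t+1})$ the pattern $P$ persists verbatim at $(i^{*},j^{*},k^{*},\ell^{*})$, or at $(i,j,k,\ell)$ with the unshifted $\ell$, with the inequalities intact; and when $w(x)$ is a ``gap'' value lying strictly between $y_t$ and $y_{t+1}$ (which forces $x<w^{-1}(1)$), one instead assembles an occurrence of a possibly different associated pattern from $x$, $w^{-1}(y_t)$, $w^{-1}(y_{t+1})$, and the remaining pattern positions.

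The main obstacle is precisely this last step of the second-kind case: organising, uniformly across all seven patterns, the finitely many ways a chosen position can land in the terminal special block, and, when the naive shift $\ell\mapsto\ell+1$ would violate a tight inequality ``$\ell\le h(x)$'', selecting the correct replacement quadruple of positions in $w$ --- relying on $w^{-1}(y_m)\le h(w^{-1}(y_{m-1}))$ --- that again realises one of $\hpat{2143}$, $\hpat{1324}$, $\hpat{1243}$, $\hpat{2134}$, $\hpat{1423}$, $\hpat{2314}$, $\hpat{2413}$. By contrast, the first-kind case is immediate from the order-isomorphism above, and the reduction to the first or second kind is handled entirely by Lemmas~\ref{lem:1324} and~\ref{lem:first-second-kind}.
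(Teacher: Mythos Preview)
Your overall strategy matches the paper's: reduce to $w$ being a well-organized generator of the first or second kind, dispose of the first kind by the order-isomorphism on $[n-1]$, and treat the second kind separately. The first-kind argument is exactly the paper's (it too observes $\overline w(n)=1$ and that no associated pattern ends with its minimum value).

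For the second kind your position-shift packaging is reasonable, but the residual case is where the substance lies, and your sketch of it has a real gap. First, the relevant dichotomy is not whether the value $w(x)$ at the \emph{tight} position lies in the gap $(y_t,y_{t+1})$, but whether the value $y_t=w(\ell)$ can replace $y_{t+1}=\overline w(\ell)$ without destroying the relative order---that depends on whether \emph{any} of the other chosen values lies in $(y_t,y_{t+1})$, not just $w(x)$. Second, and more seriously, when both the unshifted and shifted quadruples fail, your proposed replacement positions ``$x$, $w^{-1}(y_t)$, $w^{-1}(y_{t+1})$, and the remaining pattern positions'' are not sufficient. The paper's key device is to bring in the \emph{new} position $w^{-1}(y_{t+1}-1)$ (a gap value, hence lying before $w^{-1}(1)$) and, via the generator inequality $w^{-1}(y_{t+1})\le h\bigl(w^{-1}(y_{t+1}-1)\bigr)$, to produce a $\hpat{2314}$ (occasionally $\hpat{2134}$ or $\hpat{2413}$) at the quadruple $\bigl(x,\,w^{-1}(y_{t+1}-1),\,w^{-1}(y_t),\,w^{-1}(y_{t+1})\bigr)$. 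This auxiliary position is absent from your list, and without it the argument does not close: already in the $\hpat{2143}$ case with $h(i)=\ell$ and $y_t<w(i)<y_{t+1}$, none of your candidate quadruples works. A further wrinkle you do not address is that two upper-bound constraints can be tight simultaneously (e.g.\ both $k\le h(i)$ and $\ell\le h(j)$ for $\hpat{1324}$), which the paper handles with additional sub-cases. So your framework is sound, but completing it forces you back into essentially the same pattern-by-pattern case analysis the paper carries out, with $w^{-1}(y_{t+1}-1)$ as the indispensable auxiliary position.
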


\begin{proof}
Since $w$ avoids the associated patterns $\hpat{2143}$, $\hpat{1324}$, and  $\hpat{2134}$, $w$ is bottom-packed or right-packed by Lemma~\ref{lem:bottom-right-packed}. Note that if $w$ is bottom-packed, then $w(i)<w(j)$ if and only if $\overline{w}(i)<\overline{w}(j)$ for all $1\leq i<j <n$. Moreover, since $\overline{w}(n)=1$ and none of the seven associated patterns ends with $1$, $\overline{w}$ avoids all of the associated patterns if $w$ is bottom-packed.

Now we consider the case when $w$ is right-packed. Then $h(w^{-1}(1))=n$ by Lemma~\ref{lem:bottom-right-packed}. In the following, we show that if $\overline{w}$ contains one of the associated patterns in Definition~\ref{def:pattern4}, then so does $w$. Note that $\overline{w}$ and $w$ may contain different associated patterns. For a given associated pattern, 
we suppose that $\overline{w}$ contains the subsequence $\overline{v}\coloneqq \overline{w}(i)\overline{w}(j)\overline{w}(k)\overline{w}(\ell)$ with $1\leq i<j<k<\ell\leq n$ that gives  the pattern. 
Since $w$ is right-packed and avoids all of the patterns, we may assume that the subsequence $\overline{v}$ can be written as one of the forms $w(i)y_{q+1} y_{s+1} y_{m+1}$, $w(i)w(j)y_{s+1} y_{m+1}$, and $w(i)w(j)w(k)y_{m+1}$ for some $0\leq q<s<m<r$.

\textbf{Case $\hpat{2143}$.} 
Suppose that $\overline{w}(j)<\overline{w}(i)<\overline{w}(\ell)<\overline{w}(k)$ for some 
$\ell\leq h(i)$. 
It suffices to consider the subsequence $\overline{v}=w(i)w(j)w(k)y_{m+1}$. Then $w^{-1}(y_m)=\ell\leq h(i)$, and we may assume that $y_m<w(i)$. 
Note that if $w^{-1}(y_{m+1})\leq h(i)$, then $w$ contains the subsequence $w(i)w(j)w(k) y_{m+1}$ that gives the associated pattern $\hpat{2143}$. On the other hand, if $w^{-1}(y_{m+1})> h(i)$, then $w^{-1}(y_{m+1}-1)>i$ and $w(i)(y_{m+1}-1) y_m y_{m+1}$ gives the associated pattern $\hpat{2314}$.

\textbf{Case $\hpat{1324}$.} 
Suppose that $\overline{w}(i)<\overline{w}(k)<\overline{w}(j)<\overline{w}(\ell)$, $k\leq h(i)$, and $\ell\leq h(j)$.
It suffices to consider the subsequence $\overline{v}=w(i)w(j)y_{s+1} y_{m+1}$ or $\overline{v}=w(i)w(j)w(k)y_{m+1}$.
First, let $\overline{v}=w(i)w(j)y_{s+1} y_{m+1}$. Then $w^{-1}(y_s)\leq h(i)$ and $w^{-1}(y_m) \leq h(j)$, and we may assume that $y_s<w(i)$ or $y_m<w(j)$. 
Note that if $w^{-1}(y_{m+1})\leq h(i)$, then $w$ contains $w(i)w(j) y_{s+1} y_{m+1}$ that gives the associated pattern $\hpat{1324}$. Now we assume that $w^{-1}(y_{m+1})> h(i)$, and consider three possible cases according to $y_s$ and $y_m$. 
\begin{itemize}
\item Let $y_s>w(i)$ and $y_m<w(j)$. If $w^{-1}(y_{m+1})\leq h(j)$, then $w$ contains $w(i)w(j) y_s y_{m+1}$ that gives the associated pattern $\hpat{1324}$. On the other hand, if $w^{-1}(y_{m+1})> h(j)$, then $w(j)(y_{m+1}-1) y_m y_{m+1}$ gives the associated pattern $\hpat{2314}$.
\item Let $y_s<w(i)$ and $y_m>w(j)$. If $w^{-1}(y_{m+1})\leq h(j)$, then $w(i)w(j) y_s y_{m+1}$ gives the associated pattern $\hpat{2314}$. On the other hand, if $w^{-1}(y_{m+1})> h(j)$, then $w(j)y_s y_m y_{m+1}$ gives the associated pattern $\hpat{2134}$.
\item Let $y_s<w(i)$ and $y_m<w(j)$. If $w^{-1}(y_{m+1})\leq h(j)$, then $w$ contains $w(i)w(j) y_s y_{m+1}$ that gives the associated pattern $\hpat{2314}$. On the other hand, if $w^{-1}(y_{m+1})> h(j)$, then $w(j)(y_{m+1}-1) y_s y_{m+1}$ gives the associated pattern $\hpat{2314}$.
\end{itemize}
Now let $\overline{v} = w(i)w(j)w(k)y_{m+1}$. It follows that $w^{-1}(y_m) \leq h(j)$, and we may assume that $y_m<w(j)$. In this case if $w^{-1}(y_{m+1})\leq h(j)$, then $w$ contains the subsequence $w(i)w(j)w(k) y_{m+1}$ that gives the associated pattern $\hpat{1324}$. On the other hand, if $w^{-1}(y_{m+1}) > h(j)$, then $w(j)(y_{m+1}-1) y_m y_{m+1}$ gives the associated pattern $\hpat{2314}$.

\textbf{Case $\hpat{1243}$.} 
Suppose that $\overline{w}(i)<\overline{w}(j)<\overline{w}(\ell)<\overline{w}(k)$ and $j\leq h(i)<\ell\leq h(j)$. 
It suffices to consider the subsequence $\overline{v}=w(i)w(j)w(k)y_{m+1}$. 
In this case we have $h(i)<w^{-1}(y_m) \leq h(j)$, and we may assume that $y_m<w(j)$. Note that if $w^{-1}(y_{m+1})\leq h(j)$, then the subsequence $w(i)w(j)w(k) y_{m+1}$ of $w$ gives the associated pattern $\hpat{1243}$. On the other hand, if $w^{-1}(y_{m+1})> h(j)$, then $w(j)(y_{m+1}-1) y_m y_{m+1}$ gives the associated pattern $\hpat{2314}$.

\textbf{Case $\hpat{2134}$.} 
Suppose that $\overline{w}(j)<\overline{w}(i)<\overline{w}(k)<\overline{w}(\ell)$ and $k\leq h(i)<\ell\leq h(k)$.
In this case, we have to consider all of the forms of $\overline{v}$. If $\overline{v}$ is one of the forms $w(i)y_{q+1} y_{s+1} y_{m+1}$ and $\overline{v}=w(i)w(j)y_{s+1} y_{m+1}$, then we have $w^{-1}(y_s)\leq h(i)<w^{-1}(y_m) \leq h(w^{-1}(y_s))$, and we may assume that $y_s<w(i)$. If $w^{-1}(y_{s+1})\leq h(i)$, then $w$ contains the subsequence $w(i) y_s y_{s+1} y_{m+1}$ that gives the associated pattern $\hpat{2134}$. On the other hand, if $w^{-1}(y_{s+1})> h(i)$, then the subsequence $w(i) (y_{s+1}-1) y_s y_{s+1}$ gives the associated pattern $\hpat{2314}$.

Now let $\overline{v}=w(i)w(j)w(k)y_{m+1}$. Then $h(i)<w^{-1}(y_m) \leq h(k)$, and we may assume that $y_m<w(k)$. If $w^{-1}(y_{m+1})\leq h(k)$, then $w$ contains the subsequence $w(i)w(j)w(k) y_{m+1}$ that gives the associated pattern $\hpat{2134}$. On the other hand, if $w^{-1}(y_{m+1})> h(k)$, then $w(k) (y_{m+1}-1) y_m y_{m+1}$ gives the associated pattern $\hpat{2314}$ in $w$.

\textbf{Case $\hpat{1423}$.} 
Suppose that $\overline{w}(i)<\overline{w}(k)<\overline{w}(\ell)<\overline{w}(j)$ and $k\leq h(i)<\ell\leq h(j)$. 
It suffices to consider the subsequence $\overline{v}=w(i)w(j)y_{s+1} y_{m+1}$ or $\overline{v}=w(i)w(j)w(k)y_{m+1}$.
First, let $\overline{v}=w(i)w(j)y_{s+1} y_{m+1}$. Then it follows that $w^{-1}(y_s)\leq h(i)<w^{-1}(y_m) \leq h(j)$, and we may assume that $y_s<w(i)$. If $w^{-1}(y_{s+1})\leq h(i)$, then $w$ contains the subsequence $w(i) y_s y_{s+1} y_{m+1}$ that gives the associated pattern $\hpat{2134}$. On the other hand, if $w^{-1}(y_{s+1})> h(i)$, then $w(i)(y_{s+1}-1) y_s y_{s+1}$ gives the associated pattern $\hpat{2314}$ in $w$.

Now let $\overline{v}=w(i)w(j)w(k)y_{m+1}$. Then it follows that $h(i)<w^{-1}(y_m) \leq h(j)$, and we may assume that $y_m<w(k)$. If $w^{-1}(y_{m+1})\leq h(j)$, then $w$ contains the subsequence $w(i)w(j)w(k) y_{m+1}$ that gives the associated pattern $\hpat{1423}$. On the other hand, if $w^{-1}(y_{m+1})> h(j)$, then $w$ contains the associated pattern $\hpat{2314}$ or $\hpat{2413}$. Indeed, if $w^{-1}(1)\leq h(i)$, then $w(i) (y_{m+1}-1) 1 y_{m+1}$ gives $\hpat{2314}$; if $w^{-1}(1)> h(i)$, then $w(i) w(j) 1 y_{m+1}$ gives $\hpat{2413}$.

\textbf{Case $\hpat{2314}$.} 
Suppose that $\overline{w}(k)<\overline{w}(i)<\overline{w}(j)<\overline{w}(\ell)$ and $k\leq h(i)<\ell\leq h(j)$. 
Again, it suffices to consider the subsequence $\overline{v}=w(i)w(j)y_{s+1} y_{m+1}$ or $\overline{v}=w(i)w(j)w(k)y_{m+1}$.
First, let $\overline{v}=w(i)w(j)y_{s+1} y_{m+1}$. Then $w^{-1}(y_s)\leq h(i)<w^{-1}(y_m) \leq h(j)$, and we may assume that $y_m<w(j)$. Accordingly, the subsequence $w(i) (y_{m+1}-1) y_s y_{m+1}$ gives the associated pattern $\hpat{2314}$.

Now we let $\overline{v}=w(i)w(j)w(k)y_{m+1}$, so we have $h(i)<w^{-1}(y_m) \leq h(j)$. We may assume that $y_m<w(j)$. If $w^{-1}(y_{m+1})\leq h(j)$, then  $w$ contains $w(i)w(j)w(k) y_{m+1}$ that gives the associated pattern $\hpat{2314}$. On the other hand, if $w^{-1}(y_{m+1})> h(j)$, then the subsequence $w(j) (y_{m+1}-1) y_m y_{m+1}$ gives the associated pattern $\hpat{2314}$.

\textbf{Case $\hpat{2413}$.} 
Suppose that $\overline{w}(k)<\overline{w}(i)<\overline{w}(\ell)<\overline{w}(j)$ for $j\leq h(i)<k\leq h(j)<\ell\leq h(k)$. 
Again, it suffices to consider the subsequence $\overline{v}=w(i)w(j)y_{s+1} y_{m+1}$ or $\overline{v}=w(i)w(j)w(k)y_{m+1}$. First, let $\overline{v}=w(i)w(j)y_{s+1} y_{m+1}$. Then $h(i)<w^{-1}(y_s)\leq h(j)<w^{-1}(y_{m})\leq h(w^{-1}(y_s))$, and we may assume that $y_m<w(i)$. Accordingly, $w$ contains the subsequence $w(i) w(j) y_s y_{m+1}$ that gives the associated pattern $\hpat{2413}$.

Now let $\overline{v}=w(i)w(j)w(k)y_{m+1}$. Then we have $h(j)<w^{-1}(y_{m})\leq h(k)$, and we may assume that $y_m<w(i)$. If $w^{-1}(y_{m+1})\leq h(k)$, then $w$ contains $w(i)w(j)w(k) y_{m+1}$ that gives the associated pattern $\hpat{2413}$. On the other hand, if $w^{-1}(y_{m+1})> h(k)$, then $w$ contains $w(k) (y_{m+1}-1) 1 y_{m+1}$ that gives the associated pattern $\hpat{2314}$.

Thus we conclude that if $\overline{w}$ contains one of the seven associated patterns, then so does $w$. This completes the proof.
\end{proof}

In the following, we show that the converse of Theorem~\ref{thm:irregular} is also true, thereby characterizing the regularity of $\Gamma_{w,h}$ in terms of pattern avoidance for permutations in $\gen$. Having established the necessary structural properties and size computations, we now proceed to prove this main result by induction on $n$, using the previous lemmas and propositions.

\begin{theorem}\label{thm:regular}
Let $w$ be a permutation in $\gen$ for a Hessenberg function~$h$. The graph $\Gamma_{w,h}$ is regular if $w$ avoids all of the associated patterns $\hpat{2143}$, $\hpat{1324}$, $\hpat{1243}$, $\hpat{2134}$, $\hpat{1423}$, $\hpat{2314}$, and $\hpat{2413}$. 
\end{theorem}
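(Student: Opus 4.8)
The plan is to reduce the regularity of $\Gamma_{w,h}$ to the single numerical equality $|E_{w,h}(w)|=|E_{w,h}(w_0)|$. Indeed, every vertex $u$ of $\Gamma_{w,h}$ lies in $[w,w_0]=[w,w_0]_h$, so $w\preceq_h u\preceq_h w_0$ by Proposition~\ref{prop:h-interval} and Theorem~\ref{thm:interval}, and Theorem~\ref{thm:increasing} gives $|E_{w,h}(w)|\le|E_{w,h}(u)|\le|E_{w,h}(w_0)|$; once the two ends agree, all degrees agree and the graph is regular. Since $w$ avoids $\hpat{1324}$, it is well-organized by Lemma~\ref{lem:1324}, so $\overline{w}$ is defined, is again a generator avoiding all seven associated patterns by Proposition~\ref{prop:chain_w}, and satisfies $|E_{w,h}(w)|=|E_{w,h}(\overline{w})|$ by Proposition~\ref{prop:size of Ewh}(1). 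It therefore remains to prove $|E_{w,h}(\overline{w})|=|E_{w,h}(w_0)|$, which I would establish by induction on $n$, the case $n=1$ being trivial.

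For the inductive step, recall $\overline{w}(n)=1=w_0(n)$ and, by Proposition~\ref{prop:wbar}(2), that the vertices $u$ of $\Gamma_{w,h}$ with $u(n)=1$ are exactly the elements of $[\overline{w},w_0]$, i.e.\ the vertex set of $\Gamma_{\overline{w},h}$. I split both $E_{w,h}(\overline{w})$ and $E_{w,h}(w_0)$ according to whether the second index equals $n$. The ``$n$-edges'' are computed outright: Proposition~\ref{prop:wbar}(3) describes $\{(i,n)\in E_{w,h}(\overline{w})\}$ and Proposition~\ref{prop:size of Ewh}(2) describes $\{(i,n)\in E_{w,h}(w_0)\}$; writing $k=|\{y_i\mid h(w^{-1}(y_i))=n\}|$ and using that $h(w^{-1}(y_r))=h(n)=n$ always, both have cardinality $k-1$. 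For an index pair $(i,j)$ with $j<n$, the permutations $\overline{w}(i,j)$ and $w_0(i,j)$ still keep the value $1$ in position $n$, so Proposition~\ref{prop:wbar}(2) yields $\overline{w}(i,j)\succeq w\iff\overline{w}(i,j)\succeq\overline{w}$ and likewise for $w_0$; hence the ``$j<n$'' parts of $E_{w,h}(\overline{w})$ and $E_{w,h}(w_0)$ are precisely $E_{\overline{w},h}(\overline{w})$ and $E_{\overline{w},h}(w_0)$, that is, the degrees of $\overline{w}$ and of $w_0$ inside $\Gamma_{\overline{w},h}$. (Here one uses that $E_{\overline{w},h}(u)$ contains no edge $(i,n)$ when $u(n)=1$, since moving $1$ into a position $i<n$ makes $(u(i,n))[i]\!\!\uparrow$ begin with $1$ whereas $\overline{w}[i]\!\!\uparrow$ begins with a value $\ge 2$.)

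To finish the induction I would realize $\Gamma_{\overline{w},h}$ as a GKM subgraph living on $\mathfrak{S}_{n-1}$. Set $h'(i)=\min\{h(i),\,n-1\}$ for $i\in[n-1]$ and let $\overline{w}'\in\mathfrak{S}_{n-1}$ be the flattening of $\overline{w}(1)\cdots\overline{w}(n-1)$ (subtract $1$ from every value). Then $\overline{w}'$ is a generator for $h'$, and since all indices involved are at most $n-1$, each pattern condition for $h'$ implies the corresponding one for $h$ (the only subtle conditions, of the form $h'(i)<\ell$, survive because $h'(i)<\ell\le n-1$ forces $h'(i)=h(i)<\ell$); thus $\overline{w}'$ avoids all seven associated patterns for $h'$. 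Moreover the flattening $u\mapsto u'$ is a graph isomorphism $\Gamma_{\overline{w},h}\cong\Gamma_{\overline{w}',h'}$: on vertices it is the order isomorphism $[\overline{w},w_0]\cong[\overline{w}',w_0']$ provided by Proposition~\ref{prop:Bruhat order}(2) after shifting all values by $1$ (with $w_0'$ the longest element of $\mathfrak{S}_{n-1}$), and on edges it works because, as noted, no edge of $\Gamma_{\overline{w},h}$ touches position $n$. By the inductive hypothesis $\Gamma_{\overline{w}',h'}$ is regular, so $\deg_{\Gamma_{\overline{w},h}}(\overline{w})=\deg_{\Gamma_{\overline{w},h}}(w_0)$; adding back the common ``$n$-edge'' count $k-1$ gives $|E_{w,h}(\overline{w})|=|E_{w,h}(w_0)|$, which completes the proof.

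I expect the bookkeeping in the last step to be the main obstacle: checking that $\overline{w}'$ remains a generator for $h'$, that pattern-avoidance is inherited (the conditions $h(i)<\ell$ are the ones to watch, and they are preserved precisely because all indices stay below $n$), and that $u\mapsto u'$ is a genuine isomorphism of the induced subgraphs — which hinges on the observation that $\Gamma_{\overline{w},h}$ never uses a transposition involving position $n$. The other load-bearing point is the exact cancellation making the $n$-edge counts of $\overline{w}$ and of $w_0$ coincide; this is where Propositions~\ref{prop:wbar}(3) and~\ref{prop:size of Ewh}(2) combine with the trivial fact $h(n)=n$.
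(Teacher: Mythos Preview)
Your proposal is correct and follows essentially the same route as the paper: induction on $n$, reduction of regularity to the single equality $|E_{w,h}(w)|=|E_{w,h}(w_0)|$ via Theorem~\ref{thm:increasing}, passage from $w$ to $\overline{w}$ using Proposition~\ref{prop:size of Ewh}(1) and Proposition~\ref{prop:chain_w}, splitting off the $(i,n)$-edges by combining Proposition~\ref{prop:wbar}(3) with Proposition~\ref{prop:size of Ewh}(2), and finally invoking the inductive hypothesis on the flattening $\overline{w}'$ for the restricted Hessenberg function $h'$. You spell out a few details (the graph isomorphism $\Gamma_{\overline{w},h}\cong\Gamma_{\overline{w}',h'}$ and the inheritance of pattern avoidance under passage to $h'$) that the paper leaves implicit, but the architecture is the same.
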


\begin{proof}
Suppose that $w$ avoids all of the associated patterns. We prove that $\Gamma_{w,h}$ is regular by induction on $n$, 
the number of vertices of $\Gamma_{w,h}$ (equivalently, the length of the permutation~$w$). 
For $n=1$, $\Gamma_{w,h}$ is a single vertex graph, and this is certainly regular. 

Suppose that $n \geq2$ and $\Gamma_{w',h'}$ is regular for any Hessenberg function $h'$ on $[n-1]$ and the permutations $w'\in\mathcal{G}_{h'}$ that avoid all of the associated patterns. Now we consider a Hessenberg function $h$ on $[n]$ and a permutation $w\in\gen$ that avoids all the associated patterns. From Theorem~\ref{thm:increasing} it suffices to show that $|E_{w,h}(w)|=|E_{w,h}(w_0)|$. From Lemma~\ref{lem:bottom-right-packed} and Proposition~\ref{prop:size of Ewh} (1), it follows that $w$ is a bottom-packed or a right-packed permutation satisfying that $|E_{w,h}(w)|=|E_{w,h}(\overline{w})|$. Hence we only need to show that  $|E_{w,h}(\overline{w})|=|E_{w,h}(w_0)|$.
By Proposition~\ref{prop:wbar}, we see that $\overline{w}$ is a permutation in $\gen$ for $h$, and $\Gamma_{\overline{w},h}$ is the induced subgraph of $\Gamma_{w,h}$ whose vertex set is $\{u\in[w,w_0] \mid u(n)=1\}$, and 
$$
|E_{w,h}(\overline{w})|=|E_{\overline{w},h}(\overline{w})|+|\{ y_i \in Y(w)\mid h(w^{-1}(y_i))=n \text{~for~} 0\leq i < r \}|\,.
$$
On the other hand, by Proposition~\ref{prop:size of Ewh} (2), we have
$$
|E_{w,h}(w_0)|=|E_{\overline{w},h}(w_0)|+|\{y_i \in Y(w)\mid h(w^{-1}(y_i))=n \text{~for~} 0\leq i \leq r\}|-1\,.
$$
Thus, it suffices to show that $\Gamma_{\overline{w},h}$ is regular. Let $h'$ be the Hessenberg function on $[n-1]$ whose incomparability graph is obtained from the incomparability graph of $h$ by deleting the vertex $n$ and the edges incident to it. Let $w'\in \mathfrak{S}_{n-1}$ be the permutation defined by $w'(i)=\overline{w}(i)-1$ for $i=1,2,\dots,n-1$. As $\overline{w}$ is a permutation in $\gen$, it follows that $w'$ is a permutation in $\mathcal{G}_{h'}$. Moreover, it follows from Proposition~\ref{prop:chain_w} that $w'$ avoids all the patterns. Accordingly, we can apply the induction hypothesis to $w'$ and $h'$ and conclude that $\Gamma_{w',h'}$ is regular. This completes the proof.
\end{proof}

\subsection{Associated patterns for an arbitrary permutation}\label{sec:all}

Now, we consider not only permutations in $\gen$ but also all the permutations in $\Sn{n}$ and provide a set of patterns that characterize the regularity of $\Gamma_{w,h}$. We define a few more patterns that are needed.

\begin{definition}\label{def:pattern5}
Let $h$ be a Hessenberg function on $[n]$. For a permutation $w\in \mathfrak{S}_n$, we say that $w$ contains the associated pattern
\begin{enumerate}
\item $\hpat{25314}$ if $w(\ell)<w(i)<w(k)<w(m)<w(j)$;
\item $\hpat{24315}$ if $w(\ell)<w(i)<w(k)<w(j)<w(m)$;
\item $\hpat{14325}$ if $w(i)<w(\ell)<w(k)<w(j)<w(m)$;
\item $\hpat{15324}$ if $w(i)<w(\ell)<w(k)<w(m)<w(j)$; 
\end{enumerate}
for some $i<j<k\leq h(i) <\ell \leq h(j) <m\leq h(k)$.
\end{definition}

Figure~\ref{fig:addedpatterns} shows the induced subgraph of the incomparability graph of $h$ for the new associated patterns. 

\begin{figure}[ht]
	\begin{center}
	\begin{tikzpicture}[scale=0.55]
		\draw (0,2.4) -- (12,2.4) -- (12,-2.5) -- (0,-2.5) -- (0,2.4);
		\draw (0,1.2) -- (12,1.2);
		\node at (6,1.75) {$\hpat{25314}, ~\hpat{24315}, ~\hpat{14325}, ~\hpat{15324}$};
		\foreach \i/\j in {1/i,2/j,3/k,4/\ell,5/m}{
		\coordinate (\i) at (2*\i,-1);
		\filldraw[color=black] (\i) circle (2pt);
		\node[below] at (\i) {$\j$};}
		\draw[thick] (1) -- (5);
		\draw[thick, bend left=80] (1) edge (3);
		\draw[thick, bend left=80] (2) edge (4);
		\draw[thick, bend left=80] (3) edge (5);
	\end{tikzpicture}
	\end{center}
	\caption{Additional associated patterns and the induced subgraph of $\mathrm{inc}(h)$ on $\{i,j,k,\ell,m\}$.}\label{fig:addedpatterns}
\end{figure}

\begin{remark}\label{remark:pattern5}
 Four patterns in Definition~\ref{def:pattern5} are all the possible ones that appear when we consider the permutations satisfying the following two conditions:
 $$ w(i)<w(k)<w(j) \quad \text{ and }  \quad   w(\ell)<w(k)<w(m)\,. $$
\end{remark}

These new patterns are related to the associated pattern $\hpat{2413}$.

\begin{lemma}\label{lem:2413}
Let $h$ be a Hessenberg function and $w\in\gen$.
If $w$ avoids the associated patterns $\hpat{1243}$, $\hpat{2134}$, and $\hpat{1423}$,
then $w$ contains the associated pattern $\hpat{2413}$ if and only if $w$ contains associated pattern $\hpat{25314}$.
\end{lemma}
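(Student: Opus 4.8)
This is an equivalence, and the content is the forward implication $\hpat{2413}\Rightarrow\hpat{25314}$; the converse uses none of the avoidance hypotheses. Indeed, if positions $i<j<k<\ell<m$ with $k\le h(i)<\ell\le h(j)<m\le h(k)$ realize $w(\ell)<w(i)<w(k)<w(m)<w(j)$, then the four positions $i<j<\ell<m$ realize $\hpat{2413}$: the value chain is $w(\ell)<w(i)<w(m)<w(j)$, while $j<k\le h(i)$, $h(i)<\ell\le h(j)$, and $h(j)<m\le h(k)\le h(\ell)$ (the last step using $k<\ell$ and monotonicity of $h$) give the index chain required in Definition~\ref{def:pattern4}(7).

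For the forward direction, fix an occurrence of $\hpat{2413}$, say at $i<j<k<\ell$ with $j\le h(i)<k\le h(j)<\ell\le h(k)$ and $w(k)<w(i)<w(\ell)<w(j)$. The plan is to build the ``middle'' entry of a $\hpat{25314}$ by climbing through the consecutive values $w(i),w(i)+1,\dots,w(\ell)$, each next position being controlled by the generator identity $w^{-1}(a+1)\le h(w^{-1}(a))$. Let $s^{*}$ be the largest $s$ with $0\le s<w(\ell)-w(i)$ and $w^{-1}(w(i)+s)\le h(i)$ --- such $s$ exists since $s=0$ works while $s=w(\ell)-w(i)$ gives the position $\ell>k>h(i)$ --- and put $c\coloneqq w^{-1}(w(i)+s^{*})$ and $d\coloneqq w^{-1}(w(i)+s^{*}+1)$. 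Then $c\le h(i)$, $d>h(i)$, $d\le h(c)$ by the generator identity, and $w(i)<w(c)<w(\ell)$; moreover $s^{*}\ge 1$ (otherwise $d=w^{-1}(w(i)+1)\le h(i)$, impossible), and $h(c)\ge d>h(i)$ with $h$ nondecreasing forces $c>i$. The value inequalities also give $c\ne j$ and $d\notin\{i,j,c,k\}$.

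Next I would locate $c$. Since $i<c\le h(i)<k$ and $c\ne j$, either $i<c<j$ or $j<c<k$. The first is impossible: there $d>h(i)\ge j$, so $i<c<j<d$, and $w(i)<w(c)<w(d)<w(j)$ together with $d\le h(c)$ and $c\le h(i)<d$ exhibit $\hpat{1243}$, against the hypothesis. Hence $j<c<k$, and now $i,j,c,k,\ell$ already carry the correct value order $w(k)<w(i)<w(c)<w(\ell)<w(j)$ and index order $i<j<c\le h(i)<k\le h(j)<\ell$; they realize $\hpat{25314}$ as soon as $\ell\le h(c)$. To force this I would split on $d$: if $d\ge\ell$ then $\ell\le d\le h(c)$ and we are done; if $d<\ell$ and $d\le h(j)$ then (as $d<\ell$ forces $w(d)<w(\ell)<w(j)$) the positions $i,j,c,d$ with $w(i)<w(c)<w(d)<w(j)$, $d\le h(j)$, $c\le h(i)<d$ exhibit $\hpat{1423}$, a contradiction; and if $d<\ell$ and $d>h(j)$ then $k\le h(j)<d<\ell$, so $i<j<c<k<d$ and these five positions realize $\hpat{25314}$ outright (values $w(k)<w(i)<w(c)<w(d)<w(j)$, indices $i<j<c\le h(i)<k\le h(j)<d\le h(c)$). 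In every surviving branch $w$ contains $\hpat{25314}$.

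The hard part is this last split. The generator identity advances positions through $h$ only one value at a time and, on its own, gives no lower bound for $h(c)$, since the positions visited while climbing the values need not be increasing; the inequality $\ell\le h(c)$ must instead be squeezed out by confronting each leftover configuration with $\hpat{1243}$ and $\hpat{1423}$, and pinning down exactly which four- and five-point subsequences serve as the forbidden witnesses is the step that demands care.
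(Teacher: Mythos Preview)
Your argument is correct. The backward direction matches the paper's, and your forward direction is complete: the case split on the location of $c$ and then on $d$ covers everything, and each forbidden configuration you invoke is checked against the precise index constraints in Definition~\ref{def:pattern4}.

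The route, however, is genuinely different from the paper's. The paper fixes the $\hpat{2413}$ at positions $i<j<\ell<m$, defines
\[
X=\{\,w(s)\mid w(i)<w(s)<w(m),\ j<s<\ell\,\},
\]
and takes the middle position $k$ with $w(k)=\max X$. It then argues separately that $X\ne\emptyset$ and $k\le h(i)$ (using $\hpat{1243}$ and $\hpat{1423}$), and that $h(k)\ge m$ (using $\hpat{2134}$). Your construction instead climbs the \emph{consecutive} values $w(i),w(i)+1,\dots$ via the generator identity and stops at the last one whose position stays within $h(i)$; this simultaneously produces the middle position $c$ and its ``successor'' $d$ with $d>h(i)$ and $d\le h(c)$, which is exactly the leverage needed to finish. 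A pleasant by-product is that your proof never invokes $\hpat{2134}$-avoidance: only $\hpat{1243}$ (to exclude $c<j$) and $\hpat{1423}$ (to exclude the sub-case $d\le h(j)$) are used. So you have in fact established the slightly stronger statement that for a generator $w$ avoiding $\hpat{1243}$ and $\hpat{1423}$ alone, $\hpat{2413}$ already implies $\hpat{25314}$. The paper's choice $w(k)=\max X$ is natural but loses the automatic control on $h(k)$ that your value-climb provides, which is why the paper needs the extra $\hpat{2134}$ hypothesis to recover $h(k)\ge m$.
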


\begin{proof}
Let $w$ contain the associated pattern $\hpat{25314}$, i.e., $w(\ell)<w(i)<w(k)<w(m)<w(j)$ for some $i<j<k\leq h(i) <\ell \leq h(j) <m\leq h(k)$. Then $w(\ell)<w(i)<w(m)<w(j)$ and $i<j\leq h(i) <\ell \leq h(j) <m\leq h(\ell)$. Accordingly, $w$ contains the associated pattern $\hpat{2413}$. 

Conversely, 
let $w$ contain $w(i)w(j)w(\ell)w(m)$ that gives the associated pattern $\hpat{2413}$. Consider the following set
$$
X\coloneqq \{w(s) \mid w(i)<w(s)<w(m) \text{ for } j<s<\ell\}\,. 
$$
Suppose that $X$ is empty. Let $y$ be the smallest integer satisfying that $\ell<w^{-1}(y)$ and $w(i)<y$. By Lemma~\ref{lem:y}~(1), there exist $z$ with $i<z<w^{-1}(y)\leq h(z)$ such that $w(i)<w(z)<y$.
Moreover $z<j$ since $X$ is empty and $y$ is the smallest. Accordingly, $w$ contains the subsequence $w(i)w(z)w(j)y$ giving the pattern $\hpat{1243}$, which is a contradiction. Hence $X$ is not empty.

Now we show that $w(i)w(j)w(k)w(\ell)w(m)$ gives the associated pattern $\hpat{25314}$ by taking $w(k)=\max X$. It suffices to show that $k\leq h(i)$ and $h(k)\geq m$. 
Note that if $h(i)< w^{-1}(x)$ for all $x\in X$, then $w$ contains the subsequence $w(i)w(z)w(j)y$ for $y=\min X$ and some $z$ satisfying that $i<z<w^{-1}(y)\leq h(z)$ such that $w(i)<w(z)<y$ by Lemma~\ref{lem:y}~(1). Again, it contradicts the fact that $w$ avoids the associated pattern $\hpat{1243}$. Therefore $h(i)\geq w^{-1}(x)$ for some $x\in X$. Note that if $w^{-1}(x_1)\leq h(i)< w^{-1}(x_2)$ for some $x_1,x_2\in X$, then 
$x_1> x_2$
since $w$ avoids the associated pattern $\hpat{1423}$. 
It follows that
$k\leq h(i)$. In this case, if $h(k)<m$, then $w$ contains $w(k)w(\ell)w(s)w(m)$ for some $s$ satisfying that $\ell<s<m$ and $w(k)<w(s)<w(m)$, which gives the associated pattern $\hpat{2134}$. Thus $h(k)\geq m$, and 
this completes the proof.
\end{proof}

Recall that by Theorems~\ref{thm:irregular} and~\ref{thm:regular}, we can characterize the regularity of 
the graph $\Gamma_{w,h}$ for a permutation $w\in\gen$ as follows: 
 $\Gamma_{w,h}$ is regular if and only if $w$ avoids associated patterns $\hpat{2143}$, $\hpat{1324}$, $\hpat{1243}$, $\hpat{2134}$, $\hpat{1423}$, $\hpat{2314}$, and $\hpat{2413}$.  Owing to Lemma~\ref{lem:2413}, we can replace the pattern $\hpat{2413}$ with a new pattern $\hpat{25314}$ in the characterization of the regularity of $\Gamma_{w,h}$.

\begin{remark}\label{rmk:generators} Let $w$ be a permutation in $\gen$ for a Hessenberg function $h$. Then the following are equivalent:
\begin{enumerate}
    \item $w$ avoids associated patterns $\hpat{2143}$, $\hpat{1324}$, $\hpat{1243}$, $\hpat{2134}$, $\hpat{1423}$, $\hpat{2314}$, and \,\,$\hpat{2413}$.
    \item $w$ avoids associated patterns $\hpat{2143}$, $\hpat{1324}$, $\hpat{1243}$, $\hpat{2134}$, $\hpat{1423}$, $\hpat{2314}$, and\,\, $\hpat{25314}$. 
\end{enumerate}
\end{remark}

Now, we consider all permutations, not only the permutations in $\gen$ for $h$. Recall from Proposition~\ref{prop:h-fixed points} that if $w\not\in\gen$ for a Hessenberg function $h$, then $\Omega_{w, h}^T$ is identified with the set $w\widetilde{w}^{-1}[\widetilde{w}, w_0]$, where $\widetilde{w}$ is the corresponding permutation in $\gen$ of $w$ satisfying \eqref{eq:generator}.
Moreover, Lemma~\ref{lemm:iso} tells us that two graphs $\Gamma_{w,h}$ and $\Gamma_{\widetilde{w},h}$ are isomorphic, which allows us to consider the graph $\Gamma_{\widetilde{w},h}$ of the  corresponding permutation $\widetilde{w}\in\gen$ of $w$ instead of $\Gamma_{w,h}$.

\begin{lemma}\label{lem:allpatterns}
For a Hessenberg function $h$ and a permutation $w$, let $\widetilde{w}$ be the corresponding permutation in $\gen$ of $w$. 
\begin{enumerate}
\item For each associated pattern $p\in \{ \hpat{2143}, \hpat{1324}, \hpat{1243},\hpat{2134}, \hpat{1423}, \hpat{2314}\}$, $w$ contains $p$ if and only if $\widetilde{w}$ contains $p$.
\item 
Let $w$ avoid the associated pattern $\hpat{1324}$. 
Then $w$ contains one of the associated patterns $\hpat{25314}$, $\hpat{24315}$, $\hpat{14325}$, and $\hpat{15324}$ if and only if $\widetilde{w}$ contains the associated pattern $\hpat{25314}$. 
\end{enumerate}
\end{lemma}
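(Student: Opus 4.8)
## Proof proposal for Lemma~\ref{lem:allpatterns}

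The plan is to exploit the defining property~\eqref{eq:generator} of the corresponding generator $\widetilde{w}$: for all $i<j$ with $j\le h(i)$, we have $\widetilde{w}(i)<\widetilde{w}(j)$ iff $w(i)<w(j)$. In other words, $w$ and $\widetilde{w}$ induce the same relative order on any set of positions that is ``$h$-comparable'' in the sense required by the patterns. Every associated pattern in Definitions~\ref{def:pattern4} and~\ref{def:pattern5} is defined by (i) a list of inequalities between the values $w(i),w(j),\dots$ at positions $i<j<\cdots$, together with (ii) a list of constraints of the form $(\text{position})\le h(\text{earlier position})$. So for part~(1), I would simply observe that for each of the six patterns $\hpat{2143},\hpat{1324},\hpat{1243},\hpat{2134},\hpat{1423},\hpat{2314}$, every comparison of values $w(s)\gtrless w(t)$ occurring in the pattern's definition is between positions $s<t$ with $t\le h(s')$ for one of the earlier positions $s'\le s$, hence in fact $t\le h(s)$ as well since $s'\le s$ forces $h(s')\le h(s)$; more precisely one checks in each of the six cases that any two compared positions $s<t$ satisfy $t\le h(s)$. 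Then~\eqref{eq:generator} gives $w(s)<w(t)\iff\widetilde{w}(s)<\widetilde{w}(t)$, so the witnessing subsequence of positions for the pattern in $w$ is also a witnessing subsequence in $\widetilde{w}$, and vice versa. The positional constraints $\{k\le h(i)<\ell\}$ etc.\ involve only $h$ and the positions, not the values, so they transfer verbatim.

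The point where a little more care is needed, and why part~(2) is stated separately with the extra hypothesis that $w$ avoids $\hpat{1324}$, is the pattern $\hpat{2413}$ (equivalently $\hpat{25314}$) and its length-$5$ relatives: in the defining condition $i<j\le h(i)<k\le h(j)<\ell\le h(k)$ of $\hpat{2413}$, the positions $i$ and $k$ (also $j$ and $\ell$) are \emph{not} $h$-comparable, so~\eqref{eq:generator} does not directly control the comparison $w(i)\gtrless w(k)$, and $w$ and $\widetilde{w}$ may genuinely differ there. This is exactly why passing to $\widetilde{w}$ can create or destroy a $\hpat{2413}$, and it is the reason the four length-$5$ patterns of Definition~\ref{def:pattern5} were introduced (Remark~\ref{remark:pattern5}): they enumerate the ways the ``$\hpat{2413}$-skeleton'' $w(i)w(j)w(\ell)w(m)$ can be completed by an intermediate value $w(k)$ sitting in position $k$ with $j<k<\ell$, $k\le h(i)$, $h(k)\ge m$. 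Here the value $w(k)$ at the $h$-comparable position $k$ pins down, via~\eqref{eq:generator}, how $\widetilde{w}(k)$ compares to $\widetilde{w}(i)$ and to the rest; and the four cases $\hpat{25314},\hpat{24315},\hpat{14325},\hpat{15324}$ are precisely the four relative orders of $w(i),w(j),w(k),w(\ell),w(m)$ consistent with $w(i)<w(k)<w(j)$ and $w(\ell)<w(k)<w(m)$.

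For part~(2) I would argue as follows. ($\Leftarrow$) If $\widetilde{w}$ contains $\hpat{25314}$, then since all ten pairs of the five positions $i<j<k<\ell<m$ occurring there are $h$-comparable (one checks $k\le h(i)$, $\ell\le h(j)$, $m\le h(k)$, and the remaining comparisons follow because $h$ is nondecreasing and $h(i)\ge i$ — e.g.\ $\ell\le h(j)$ and $j\le k$ give the $j$–$\ell$ comparison, etc.), \eqref{eq:generator} transfers the whole value-pattern back to $w$, so $w$ contains $\hpat{25314}$. ($\Rightarrow$) Suppose $w$ contains one of the four patterns on positions $i<j<k<\ell<m$ with $i<j<k\le h(i)<\ell\le h(j)<m\le h(k)$. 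In all four, $w(i)<w(k)<w(j)$ and $w(\ell)<w(k)<w(m)$ (this is Remark~\ref{remark:pattern5}), and all ten positional pairs among $i,j,k,\ell,m$ are again $h$-comparable (same check as above). Hence~\eqref{eq:generator} forces $\widetilde{w}(i)<\widetilde{w}(k)<\widetilde{w}(j)$ and $\widetilde{w}(\ell)<\widetilde{w}(k)<\widetilde{w}(m)$ as well. By Remark~\ref{remark:pattern5} again, $\widetilde{w}(i)\widetilde{w}(j)\widetilde{w}(k)\widetilde{w}(\ell)\widetilde{w}(m)$ must then realize one of the four patterns; I must rule out the three ``wrong'' ones and land on $\hpat{25314}$. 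This is where the hypothesis that $w$ (hence, by part~(1), $\widetilde{w}$) avoids $\hpat{1324}$ is used: I would check that each of $\hpat{24315}$, $\hpat{14325}$, $\hpat{15324}$ contains, on an appropriate four-element sub-tuple of $\{i,j,k,\ell,m\}$ together with the given $h$-constraints, the associated pattern $\hpat{1324}$ — so if $\widetilde{w}$ realized one of these three, it would contain $\hpat{1324}$, a contradiction — leaving $\hpat{25314}$ as the only possibility. The main obstacle is this last bookkeeping step: verifying, pattern by pattern and sub-tuple by sub-tuple, that the $h$-inequalities $i<j<k\le h(i)<\ell\le h(j)<m\le h(k)$ are strong enough to make the relevant four positions satisfy the (weaker) constraints in the definition of $\hpat{1324}$, namely a configuration $i'<j'<k'<\ell'\le h(j')$ with $k'\le h(i')$. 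The rest is a direct appeal to~\eqref{eq:generator} and Remark~\ref{remark:pattern5}.
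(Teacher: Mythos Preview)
Your argument for part~(1) is correct and matches the paper's: each defining inequality in the six length-$4$ patterns is between an $h$-comparable pair, so~\eqref{eq:generator} transfers the pattern between $w$ and $\widetilde{w}$.

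Part~(2), however, has a real gap. Your repeated claim that ``all ten pairs of the five positions $i<j<k<\ell<m$ are $h$-comparable'' is false: the constraints $k\le h(i)<\ell\le h(j)<m\le h(k)$ explicitly give $h(i)<\ell$ and $h(j)<m$, so the pairs $(i,\ell)$, $(i,m)$, and $(j,m)$ are \emph{not} $h$-comparable. In the $(\Leftarrow)$ direction this is not fatal---the comparisons you actually need, $w(i)<w(k)<w(j)$ and $w(\ell)<w(k)<w(m)$, involve only the $h$-comparable pairs $(i,k),(j,k),(k,\ell),(k,m)$, so you correctly land in one of the four length-$5$ patterns (not $\hpat{25314}$ specifically, as you claim, but that suffices).

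The serious problem is the $(\Rightarrow)$ direction. Your plan is to rule out $\hpat{24315}$, $\hpat{14325}$, $\hpat{15324}$ in $\widetilde{w}$ by exhibiting $\hpat{1324}$ on some four positions among $\{i,j,k,\ell,m\}$. This does not work. For $\hpat{24315}$, the only $4$-subtuple carrying the value pattern $1324$ is $(i,j,k,m)$, and the needed constraint $m\le h(j)$ fails. For $\hpat{14325}$ and $\hpat{15324}$ one similarly checks that every $4$-subtuple with value pattern $1324$ requires either $\ell\le h(i)$ or $m\le h(j)$, both of which are false. The paper's proof gets around this by using the \emph{generator} property of $\widetilde{w}$: it introduces an auxiliary value $x$ (the smallest value exceeding $\widetilde{w}(j)$, resp.\ $\widetilde{w}(i)$, whose position lies beyond $k$) and uses $\widetilde{w}^{-1}(x)\le h(\widetilde{w}^{-1}(x-1))$ to manufacture a $\hpat{1324}$ on positions that include $\widetilde{w}^{-1}(x-1)$ and $\widetilde{w}^{-1}(x)$, which need not lie in $\{i,j,k,\ell,m\}$. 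Your outline never invokes the generator property of $\widetilde{w}$, and without it the step cannot be completed.
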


\begin{proof}
\begin{enumerate}
\item Suppose that $w$ contains the subsequence $w(i)w(j)w(k)w(\ell)$ that gives an associated pattern $p\in \{ \hpat{2143},  \hpat{1324}, \hpat{1243},\hpat{2134}, \hpat{1423}, \hpat{2314}\}$ for some $i<j<k<\ell$. Due to (\ref{eq:generator}) 
and the definition of the associated patterns, we have
\begin{itemize}
    \item for $p=\hpat{2143}$, $\ell\leq h(i)$;
    \item for $p\in\{\hpat{1324},\hpat{1243},\hpat{1423}\}$, 
    $\widetilde{w}(i)<\widetilde{w}(j),\widetilde{w}(k),\widetilde{w}(\ell)$,
    the position of the second minimum of $p$ is not greater than $h(i)$, and $\ell\leq h(j)$;
    \item for $p\in\{\hpat{2134},\hpat{2314}\}$, 
    $\widetilde{w}(\ell)>\widetilde{w}(i),\widetilde{w}(j),\widetilde{w}(k)$
    and $k\leq h(i)$.
\end{itemize}
Therefore, $\widetilde{w}(i),\widetilde{w}(j),\widetilde{w}(k),\widetilde{w}(\ell)$ are in the same order as $w(i), w(j), w(k), w(\ell)$ for any $p$, i.e., $\widetilde{w}$ contains $p$ if $w$ contains $p$. Since the roles of $w$ and $\widetilde{w}$ are symmetric, the proof is completed. 
\item Let $\widetilde{w}$ contain the associated pattern $\hpat{25314}$, i.e., $\widetilde{w}(\ell)<\widetilde{w}(i)<\widetilde{w}(k)<\widetilde{w}(m)<\widetilde{w}(j)$ for some $i<j<k\leq h(i) <\ell \leq h(j) <m\leq h(k)$. Then from~\eqref{eq:generator}, $w$ satisfies $w(i)<w(k)<w(j)$ and $w(\ell)<w(k)<w(m)$. Therefore, by Remark~\ref{remark:pattern5}, $w$ contains one of the associated patterns $\hpat{25314}$, $\hpat{24315}$, $\hpat{14325}$, and $\hpat{15324}$.

Conversely, let $w$ contain the subsequence $w(i)w(j)w(k)w(\ell)w(m)$ that gives an associated pattern in $P\coloneqq\{\hpat{25314},\hpat{24315}, \hpat{14325}, \hpat{15324}\}$. By (\ref{eq:generator}) and Remark~\ref{remark:pattern5}, the subsequence $\widetilde{w}(i)\widetilde{w}(j)\widetilde{w}(k)\widetilde{w}(\ell)\widetilde{w}(m)$ gives an associated pattern $p \in P$. We show that $p\not\in\{\hpat{24315}, \hpat{14325},\hpat{15324}\}$ because $\widetilde{w}\in\gen$. 
\begin{itemize}
\item[$\bullet$] For $p\in \{\hpat{24315}, \hpat{14325}\}$, let $x$ be the smallest integer satisfying that $\widetilde{w}(j)<x$ and $k<\widetilde{w}^{-1}(x)$. Then $\widetilde{w}^{-1}(x-1)<k$. If $i<\widetilde{w}^{-1}(x-1)$ (respectively, $i>\widetilde{w}^{-1}(x-1)$), then $\widetilde{w}$ contains $\widetilde{w}(i)(x-1)\widetilde{w}(k)x$ (respectively, $\widetilde{w}(i)\widetilde{w}(j)\widetilde{w}(k)x$) that gives the associated pattern $\hpat{1324}$. This contradicts the assumption by (1).
\item[$\bullet$] For $p=\hpat{15324}$, let $x$ be the smallest integer satisfying that $\widetilde{w}(i)<x$ and $k<\widetilde{w}^{-1}(x)$. Then $\widetilde{w}^{-1}(x-1)<k$ and $x\leq w(\ell)$. If $m<\widetilde{w}^{-1}(x)$ (respectively, $m>\widetilde{w}^{-1}(x)$), then $\widetilde{w}$ contains $(x-1)\widetilde{w}(k)\widetilde{w}(\ell)\widetilde{w}(m)$ (respectively, $(x-1)\widetilde{w}(k)x\widetilde{w}(m)$) that gives the associated pattern $\hpat{1324}$. This contradicts the assumption by (1).
\end{itemize}
Thus $p=\hpat{25314}$, as we desired.
\end{enumerate}
\end{proof}

The following is our main theorem.

\begin{theorem}\label{thm:main}
For a Hessenberg function $h\colon [n] \to [n]$, let $w$ be a permutation on $[n]$. The graph $\Gamma_{w,h}$ is regular if and only if $w$ avoids all of the associated patterns $\hpat{2143}$, $\hpat{1324}$, $\hpat{1243}$, $\hpat{2134}$, $\hpat{1423}$, $\hpat{2314}$, $\hpat{25314}$, $\hpat{24315}$, $\hpat{14325}$, and $\hpat{15324}$. 
\end{theorem}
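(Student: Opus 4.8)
The plan is to reduce the statement for an arbitrary permutation $w$ to the already-proven case of generators (Theorems~\ref{thm:irregular} and~\ref{thm:regular}, together with Remark~\ref{rmk:generators}), using Proposition~\ref{prop:iso} and Lemma~\ref{lem:allpatterns}. Write $\widetilde{w}$ for the corresponding generator of $w$. The first step is to record what the combination of earlier results gives on the generator side: by Theorems~\ref{thm:irregular} and~\ref{thm:regular}, $\Gamma_{\widetilde w,h}$ is regular if and only if $\widetilde w$ avoids the seven associated patterns $\hpat{2143}$, $\hpat{1324}$, $\hpat{1243}$, $\hpat{2134}$, $\hpat{1423}$, $\hpat{2314}$, and $\hpat{2413}$, and by Remark~\ref{rmk:generators} this is equivalent to $\widetilde w$ avoiding $\hpat{2143}$, $\hpat{1324}$, $\hpat{1243}$, $\hpat{2134}$, $\hpat{1423}$, $\hpat{2314}$, and $\hpat{25314}$. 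By Proposition~\ref{prop:iso}, $\Gamma_{w,h}$ is isomorphic to $\Gamma_{\widetilde w,h}$, hence $\Gamma_{w,h}$ is regular if and only if $\widetilde w$ avoids those seven patterns.

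The second step is to translate "$\widetilde w$ avoids the seven patterns" back into a condition on $w$. For the six patterns $\hpat{2143}$, $\hpat{1324}$, $\hpat{1243}$, $\hpat{2134}$, $\hpat{1423}$, $\hpat{2314}$, part~(1) of Lemma~\ref{lem:allpatterns} says $w$ contains one of them if and only if $\widetilde w$ does, so these six carry over verbatim. For the seventh, I would argue as follows. Suppose first that $w$ avoids all ten patterns in the statement; in particular $w$ avoids $\hpat{1324}$, so part~(2) of Lemma~\ref{lem:allpatterns} applies, and since $w$ avoids $\hpat{25314}$, $\hpat{24315}$, $\hpat{14325}$, and $\hpat{15324}$, the lemma gives that $\widetilde w$ avoids $\hpat{25314}$; together with the six patterns transferred by part~(1), $\widetilde w$ avoids all seven generator patterns, so $\Gamma_{w,h}\cong\Gamma_{\widetilde w,h}$ is regular. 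Conversely, suppose $\Gamma_{w,h}$ is regular; then $\widetilde w$ avoids the seven generator patterns, so by part~(1) $w$ avoids $\hpat{2143}$, $\hpat{1324}$, $\hpat{1243}$, $\hpat{2134}$, $\hpat{1423}$, $\hpat{2314}$; knowing now that $w$ avoids $\hpat{1324}$, part~(2) of Lemma~\ref{lem:allpatterns} applies, and since $\widetilde w$ avoids $\hpat{25314}$ the "only if" direction of that part forces $w$ to avoid all four of $\hpat{25314}$, $\hpat{24315}$, $\hpat{14325}$, $\hpat{15324}$. Thus $w$ avoids all ten patterns.

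The one point that needs care — and the only place where the hypothesis "$w$ avoids $\hpat{1324}$" is genuinely used — is that part~(2) of Lemma~\ref{lem:allpatterns} is conditional on avoidance of $\hpat{1324}$; so in each direction one must first establish, from part~(1), that $w$ avoids $\hpat{1324}$ before invoking part~(2). In the "if" direction this is immediate from the hypothesis, and in the "only if" direction it comes from part~(1) applied to $\hpat{1324}$. Once this logical ordering is respected, the proof is just a bookkeeping combination of Proposition~\ref{prop:iso}, Remark~\ref{rmk:generators}, and the two parts of Lemma~\ref{lem:allpatterns}; there is no further geometric or combinatorial obstacle, since all the substantive work has been done in Sections~\ref{sec:GKM} and~\ref{sec:regular}. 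I do not expect any real difficulty beyond making sure the biconditional is assembled cleanly in both directions.
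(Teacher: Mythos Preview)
Your proposal is correct and follows essentially the same route as the paper's own proof: both reduce to the generator case via Proposition~\ref{prop:iso}, transfer the six length-4 patterns using Lemma~\ref{lem:allpatterns}(1), and handle the remaining pattern via Lemma~\ref{lem:allpatterns}(2) together with the $\hpat{2413}\leftrightarrow\hpat{25314}$ equivalence for generators. The only cosmetic difference is that the paper cites Lemma~\ref{lem:2413} directly where you invoke its corollary Remark~\ref{rmk:generators}, and your careful remark about establishing $\hpat{1324}$-avoidance before applying Lemma~\ref{lem:allpatterns}(2) makes explicit a point the paper leaves implicit.
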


\begin{proof}
Let $\widetilde{w}$ be the corresponding permutation in $\gen$ of $w$. Let us denote the associated pattern sets,
\begin{align*}
&A\coloneqq\{\hpat{2143}, \hpat{1324}, \hpat{1243}, \hpat{2134}, \hpat{1423}, \hpat{2314} \},\\
&B\coloneqq\{\hpat{2143}, \hpat{1324}, \hpat{1243}, \hpat{2134}, \hpat{1423}, \hpat{2314}, \hpat{2413}\},\\
&C\coloneqq\{\hpat{2143}, \hpat{1324}, \hpat{1243}, \hpat{2134}, \hpat{1423}, \hpat{2314}, \hpat{25314}, \hpat{24315}, \hpat{14325}, \hpat{15324}\}\,.
\end{align*}

If $w$ contains an associated pattern in $A$, then so does $\widetilde{w}$ by Lemma~\ref{lem:allpatterns} (1). For a permutation $w$ that avoids all of the associated patterns in $A$, if $w$ contains an associated pattern in $\{\hpat{25314}, \hpat{24315}, \hpat{14325}, \hpat{15324}\}$, then $\widetilde{w}$ contains the associated pattern $\hpat{2413}$ by Lemmas~\ref{lem:2413} and \ref{lem:allpatterns} (2).
Hence $\widetilde{w}$ contains an associated pattern in $B$ if $w$ contains an associated pattern in $C$, and $\Gamma_{w,h}$ is irregular by Lemma~\ref{lemm:iso} and Theorem~\ref{thm:irregular}.

Similarly, if $w$ avoids all of the associated patterns in $C$, then $\widetilde{w}$ avoids all of the associated patterns in $B$, and $\Gamma_{w,h}$ is regular by Lemma~\ref{lemm:iso} and Theorem~\ref{thm:regular}. This completes the proof.
\end{proof}

Once again, by Proposition~\ref{prop:Hessenberg Schubert intersection}, we can provide a necessary condition for $\Omega_{\widetilde{w}}\cap\Hess(S,h)$ ($w\in\Sn{n}$) to be smooth, which is an extension of Theorem~\ref{thm:not-smooth_generator}. 

\begin{theorem}\label{thm:not-smooth}
    For a Hessenberg function $h$, if a permutation $w$ contains any one of the associated patterns  
    $\hpat{2143}$, $\hpat{1324}$, $\hpat{1243}$, $\hpat{2134}$, $\hpat{1423}$, $\hpat{2314}$, $\hpat{25314}$, $\hpat{24315}$, $\hpat{14325}$, and $\hpat{15324}$,  
    then for the corresponding permutation $\widetilde{w}$ in $\gen$ of $w$, the intersection $\Omega_{\widetilde{w}}\cap \Hess(S,h)$ is not smooth.
\end{theorem}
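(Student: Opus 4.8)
The plan is to deduce Theorem~\ref{thm:not-smooth} immediately from Theorem~\ref{thm:main} together with the inequality chain recorded in Proposition~\ref{prop:Hessenberg Schubert variety}, exactly as Theorem~\ref{thm:not-smooth_generator} was deduced from Theorem~\ref{thm:increasing}. The only real content to supply is the observation that irregularity of $\Gamma_{w,h}$ already obstructs smoothness of $\Omega_{w,h}$, and that this applies verbatim to an arbitrary permutation $w$ (not merely a generator).

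First I would invoke Theorem~\ref{thm:main}: if $w$ contains one of the ten associated patterns $\hpat{2143}$, $\hpat{1324}$, $\hpat{1243}$, $\hpat{2134}$, $\hpat{1423}$, $\hpat{2314}$, $\hpat{25314}$, $\hpat{24315}$, $\hpat{14325}$, $\hpat{15324}$, then the graph $\Gamma_{w,h}$ is not regular, i.e.\ there exist vertices $u_1,u_2\in\Omega_{w,h}^T$ with $\deg(u_1)\neq\deg(u_2)$. Since $\Gamma_{w,h}$ is connected (it is the subgraph of the GKM graph $\Gamma_h$ induced by the $T$-fixed set of the irreducible $T$-invariant subvariety $\Omega_{w,h}$, which is connected), irregularity forces the existence of a vertex $v\in\Omega_{w,h}^T$ with $\deg(v)$ strictly larger than $\min_{u}\deg(u)$. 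By Proposition~\ref{prop:Hessenberg Schubert variety}, for every $v\in\Omega_{w,h}^T$ we have $\dim T_v(\Omega_{w,h})\geq\deg(v)\geq\dim\Omega_{w,h}$, and if $\Omega_{w,h}$ were smooth then all three quantities would be equal for every $v$; in particular $\deg(v)=\dim\Omega_{w,h}$ would be constant over $v$, contradicting irregularity. Hence $\Omega_{w,h}$ is not smooth.

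One small point that I would make explicit: Proposition~\ref{prop:Hessenberg Schubert variety} is stated for arbitrary $w\in\mathfrak{S}_n$, so no passage to the corresponding generator $\widetilde w$ is needed here — the argument is uniform. (If one preferred, one could instead note that by Proposition~\ref{prop:iso} $\Gamma_{w,h}\cong\Gamma_{\widetilde w,h}$, so irregularity transfers to the generator $\widetilde w$, and then apply Theorem~\ref{thm:not-smooth_generator} after translating the pattern containment via Lemmas~\ref{lem:2413} and~\ref{lem:allpatterns}; but the direct route through Proposition~\ref{prop:Hessenberg Schubert variety} is cleaner and avoids any smoothness claim about $\Omega_{\widetilde w,h}$ versus $\Omega_{w,h}$.) I do not anticipate a genuine obstacle: the theorem is a formal corollary, and the only thing to be careful about is citing Proposition~\ref{prop:Hessenberg Schubert variety} in the form $\deg(v)\ge\dim\Omega_{w,h}$ with equality forced at smooth points, so that a single vertex of too-large degree suffices to conclude non-smoothness.

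\begin{proof}
Suppose $w$ contains one of the ten associated patterns listed in the statement. By Theorem~\ref{thm:main}, the graph $\Gamma_{w,h}$ is not regular, so there is a vertex $v\in\Omega_{w,h}^T$ whose degree $\deg(v)$ in $\Gamma_{w,h}$ exceeds $\min_{u\in\Omega_{w,h}^T}\deg(u)$. If $\Omega_{w,h}$ were smooth, then by Proposition~\ref{prop:Hessenberg Schubert variety} all the inequalities $\dim T_u(\Omega_{w,h})\geq\deg(u)\geq\dim\Omega_{w,h}$ would be equalities for every $u\in\Omega_{w,h}^T$; in particular $\deg(u)=\dim\Omega_{w,h}$ would be independent of $u$, so $\Gamma_{w,h}$ would be regular, a contradiction. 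Therefore $\Omega_{w,h}$ is not smooth.
\end{proof}
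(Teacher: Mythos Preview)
Your proof is correct and follows exactly the same route as the paper: invoke Theorem~\ref{thm:main} to get irregularity of $\Gamma_{w,h}$, then use Proposition~\ref{prop:Hessenberg Schubert variety} (smoothness forces $\deg(u)=\dim\Omega_{w,h}$ for all $u$) to conclude non-smoothness. The paper in fact states Theorem~\ref{thm:not-smooth} without a separate proof environment, leaving it as an immediate consequence of Proposition~\ref{prop:Hessenberg Schubert variety} and Theorem~\ref{thm:main}; your connectedness remark is harmless but unnecessary, since Proposition~\ref{prop:Hessenberg Schubert variety} already gives the uniform lower bound $\deg(u)\ge\dim\Omega_{w,h}$.
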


For a permutation $w\in\gen$, the pattern avoidance condition that we found is
also an equivalent condition for the smoothness of  $\Omega_{w}\cap\Hess(\overline{S},h)$ for any diagonal matrix $\overline{S}$ with distinct eigenvalues.

\begin{theorem}\cite[Theorem~1.2]{HLP}\label{conj:regular} For a Hessenberg function $h\colon [n] \to [n]$ and $w\in\gen$, if $\Gamma_{w,h}$ is regular, then the intersection $\Ow{w}\cap\Hess(\overline{S},h)$ is smooth  for any diagonal matrix $\overline{S}$ with distinct eigenvalues. 
\end{theorem}

\begin{theorem} \label{thm:final}
    The following statements hold:
\begin{enumerate}
   \item  If $w\in\gen$ avoids all the patterns $\hpat{2143}$, $\hpat{1324}$, $\hpat{1243}$, $\hpat{2134}$, $\hpat{1423}$, $\hpat{2314}$, and \,\,$\hpat{2413}$, then the Hessenberg Schubert variety $\Owh{w}$ is smooth.
  \item If $w\in\Sn{n}$ avoids all the patterns $\hpat{2143}$, $\hpat{1324}$, $\hpat{1243}$, $\hpat{2134}$, $\hpat{1423}$, $\hpat{2314}$, $\hpat{25314}$, $\hpat{24315}$, $\hpat{14325}$, and $\hpat{15324}$, then the Hessenberg Schubert variety $\Owh{w}$ is smooth.
\end{enumerate}
\end{theorem}

\begin{proof} 
    The second statement follows from Proposition~\ref{prop:intersection_smooth}, Lemma~\ref{lemm:iso}, and Theorem~\ref{conj:regular}.
\end{proof}

Figure~\ref{fig:diagram} shows the relationship between our results and Theorem~\ref{conj:regular} 
related to the regularity of $\Gamma_{w,h}$ and the smoothness of $\Owh{w}$, 
and the following question remains open.
\begin{problem}
Find an equivalent pattern avoidance condition for $\Owh{w}$ to be smooth.
\end{problem}

\begin{figure}[htb]
    \centering
    \begin{tikzpicture}[>=latex']
        \tikzset{block/.style= {draw, rectangle, align=center,minimum width=2cm,minimum height=1cm},
        }
        \node [block]  (start) {$\Gamma_{\widetilde{w},h}$: regular};
        \node [coordinate, below = 0.5cm of start] (aa){};
        \node [right = 0.1cm of aa] (ab){Lem.~\ref{lemm:iso}};

        \node [block, below = 1cm of start] (A1){$\Gamma_{w,h}$: regular};
        \node [coordinate, right = 1.2cm of start] (ca){};
        \node [above = 0.1cm of ca] (cb){Thm.~\ref{conj:regular}};

        \node [block, right = 2.5cm of start, text width=4cm] (B1){ $\Omega_{\widetilde{w}}\cap\Hess(\overline{S},h)$: smooth for any $\overline{S}$};
        \node [coordinate, right = 0.75cm of B1] (da){};

        \node [block, right = 1.85cm of B1] (B2){$\Omega_{\widetilde{w},h}$: smooth};
        \node [coordinate, below = 0.1cm of ca] (ba){};
        \node [below = 0.1cm of ba] (bb){Prop.~\ref{prop:Hessenberg Schubert intersection}};

        \node [coordinate, right = 1cm of B1] (ka){};
        \node [below = 0.4cm of ka] (kb){Prop.~\ref{prop:intersection_smooth}};

        \node [block, below = 1cm of B2] (A2){$\Omega_{w,h}$: smooth};
        \node [coordinate, right = 4.5cm of A1] (ea){};

        \draw[double,<->] (start) -- (A1);
        \draw[double,<-]  ([yshift= 5pt] B1.west) -- ([yshift= 5pt] start.east);
        \draw[double,<-]  ([yshift= -5pt] start.east) -- ([yshift= -5pt] B1.west);
        \draw[double,->] (B1) -- (B2);
        \draw[double,->] (B1) -- (A2);
    \end{tikzpicture}
    \caption{Relations between the regularity of $\Gamma_{w,h}$ and the smoothness of~$\Owh{w}$}\label{fig:diagram}
\end{figure}

\section*{Acknowledgements}
We are deeply grateful to the referees for their careful reading and valuable comments.


\end{document}